   \def\MR#1{}
\numberwithin{equation}{section}
\theoremstyle{plain}
\newtheorem{Satz}{Theorem}[section]
\newtheorem{Lem}[Satz]{Lemma}
\newtheorem{Prop}[Satz]{Proposition}
\newtheorem{Assum}[Satz]{Assumption}
\theoremstyle{definition}
\newtheorem{Def}[Satz]{Definition}
\newtheorem{Bsp}[Satz]{Example}
\newtheorem{Bem}[Satz]{Remark}
\crefname{Satz}{Theorem}{Theorems}
\crefname{Prop}{Proposition}{Propositions}
\crefname{Lem}{Lemma}{Lemmas}
\crefname{Kor}{Corollary}{Corollaries}
\crefname{Bem}{Remark}{Remarks}
\crefname{Bsp}{Example}{Examples}
\crefname{Def}{Definition}{Definitions}
\crefname{Assum}{Assumption}{Assumptions}
\newcommand{\ii}{\mathrm{i}}
\newcommand{\ee}{\mathrm{e}}
\newcommand{\dd}{\mathrm{d}}
\newcommand{\NN}{\mathbb{N}}
\newcommand{\RR}{\mathbb{R}}
\newcommand{\CC}{\mathbb{C}}
\newcommand{\mul}{\mathrm{mul}}
\newcommand{\fall}{\:\forall\:}
\newcommand{\bc}[2]{\overline{B}_{#2}(#1)}
\newcommand{\abs}[1]{\left\lvert#1\right\rvert}
\newcommand{\mnorm}[1]{\left\lVert#1\right\rVert}
\newcommand{\setn}[1]{\left\{#1\right\}}
\newcommand{\setns}[1]{\{#1\}}
\newcommand{\setcond}[2]{\left\{#1 \::\: #2\right\}}
\newcommand{\lr}[1]{\!\left(#1\right)}
\newcommand{\defeq}{\mathrel{\mathop:}=}
\newcommand{\eqdef}{=\mathrel{\mathop:}}
\newcommand{\norel}{\mathrel{\phantom{=}}}
\newcommand{\rr}{\varrho}
\newcommand{\pM}[1]{ \begin{pmatrix} #1  \end{pmatrix} }
\newcommand{\wirt}{\partial_{\mathrm{wirt}}}
\newcommand{\wirtq}{\overline{\partial}_{\mathrm{wirt}}}
\newcommand{\p}{\partial}
\DeclareMathOperator{\card}{card}
\DeclareMathOperator{\modrelu}{modReLU}
\DeclareMathOperator{\id}{id}
\DeclareMathOperator{\IM}{Im}
\DeclareMathOperator{\RE}{Re}
\DeclareMathOperator{\Aff}{Aff}
\newcommand{\m}{\textbf{m}}
\newcommand{\elll}{\boldsymbol{\ell}}
\newcommand{\kk}{\textbf{k}}
\let \eps \varepsilon
\let \piup \uppi
\let\emptyset\varnothing
\renewcommand*{\eqref}[1]{\hyperref[{#1}]{\textup{\tagform@{\ref*{#1}}}}}
\newcommand{\footremember}[2]{\footnote{#2}
    \newcounter{#1}
    \setcounter{#1}{\value{footnote}}}
\newcommand{\footrecall}[1]{\footnotemark[\value{#1}]}
\newcommand{\subjclass}[2][2020]{\let\@oldtitle\@title \gdef\@title{\@oldtitle\footnotetext{#1 \emph{Mathematics subject classification.} #2}}}
\newcommand{\keywords}[1]{\let\@@oldtitle\@title \gdef\@title{\@@oldtitle\footnotetext{\emph{Key words and phrases.} #1.}}}
\begin{document}

\title{Universal approximation with complex-valued deep narrow neural networks}
\author{Paul Geuchen\footremember{ku}{
Mathematical Institute for Machine Learning and Data Science (MIDS), Catholic University of Eichstätt--Ingolstadt (KU), Auf der Schanz 49, 85049 Ingolstadt, Germany.
Email: \href{mailto:paul.geuchen@ku.de}{\texttt{paul.geuchen@ku.de}}, \href{mailto:thomas.jahn@ku.de}{\texttt{thomas.jahn@ku.de}}, \href{mailto:hannes.matt@ku.de}{\texttt{hannes.matt@ku.de}.\\ All authors contributed equally to this work.}
} \and Thomas Jahn\footrecall{ku} \and Hannes Matt\footrecall{ku}}

\keywords{complex-valued neural networks, holomorphic function, polyharmonic function, uniform approximation, universality}
\subjclass{68T07,41A30,41A63,31A30,30E10}

\maketitle

\begin{abstract}
We study the universality of complex-valued neural networks with bounded widths and arbitrary depths.
Under mild assumptions, we give a full description of those activation functions $\rr:\CC \to \CC$ that have the property that their associated networks are universal, i.e., are capable of approximating continuous functions to arbitrary accuracy on compact domains.
Precisely, we show that deep narrow complex-valued networks are universal if and only if their activation function is neither holomorphic, nor antiholomorphic, nor $\RR$-affine.
This is a much larger class of functions than in the dual setting of arbitrary width and fixed depth.
Unlike in the real case, the sufficient width differs significantly depending on the considered activation function.
We show that a width of $2n+2m+5$ is always sufficient and that in general a width of $\max\setn{2n,2m}$ is necessary.
We prove, however, that a width of $n+m+3$ suffices for a rich subclass of the admissible activation functions.
Here, $n$ and $m$ denote the input and output dimensions of the considered networks.
Moreover, for the case of smooth and non-polyharmonic activation functions, we provide a quantitative approximation bound in terms of the depth of the considered networks.
\end{abstract}

\section{Introduction}

This paper addresses the universality of deep narrow complex-valued neural networks (CVNNs), i.e., the density of neural networks with arbitrarily large depths but bounded widths, in spaces of continuous functions over compact domains with respect to the uniform norm.
Our main theorem is as follows.

\begin{Satz}\label{thm:intro}
Let $n,m\in\NN$, and $\rr:\CC\to\CC$ be a continuous function which at some point is real differentiable with non-vanishing derivative.
Then $\mathcal{NN}^\rr_{n,m,2n+2m+5}$ is universal if and only if $\rr$ is neither holomorphic, nor antiholomorphic, nor $\RR$-affine.
\end{Satz}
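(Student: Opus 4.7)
The plan is to prove necessity and sufficiency separately.

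For \emph{necessity}, I would argue by closure under composition. Complex affine maps $z \mapsto Az+b$ are simultaneously holomorphic and $\RR$-affine; so if $\rr$ is holomorphic, every $f \in \mathcal{NN}^\rr_{n,m,w}$ is a composition of holomorphic maps and hence holomorphic; a parity argument in the number of $\rr$-layers shows that if $\rr$ is antiholomorphic, each network is either holomorphic or antiholomorphic; and if $\rr$ is $\RR$-affine, then each network is itself $\RR$-affine. Since every ball in $\CC^n$ carries continuous functions lying outside each of these three classes, universality fails in all three cases.

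For \emph{sufficiency}, the plan is to adopt a register model in the spirit of Kidger--Lyons. I would partition the $2n+2m+5$ coordinates into an input block of size $2n$ that preserves the input (e.g.\ as pairs $(z_i, \bar z_i)$, so that later subroutines can access both holomorphic and antiholomorphic features), an output block of size $2m$ that accumulates a running approximation of $f(x)$, and $5$ scratch coordinates in which the nonlinear subroutines take place and which are cleared between uses. Near-identity emulations of $\rr$ move the input and output blocks through each $\rr$-layer essentially unchanged, while the actual computation happens in the scratch slots; the constant $5$ is tuned to just fit a universal shallow building block.

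The central step, and the main obstacle, is an \emph{emulation lemma} that turns the three hypotheses on $\rr$ into concrete approximation tools. Using real differentiability of $\rr$ at some $z_0$ with non-vanishing derivative, affine rescaling allows a single $\rr$-layer to approximate the $\RR$-linear map $h \mapsto a h + b \bar h$ with $(a,b) = (\wirt \rr(z_0), \wirtq \rr(z_0))$. The hypothesis that $\rr$ is not holomorphic makes $\wirtq \rr$ non-vanishing somewhere, granting access to the $\bar h$-direction; the hypothesis that $\rr$ is not antiholomorphic likewise gives access to the $h$-direction; and the failure of $\rr$ to be $\RR$-affine produces a point at which some second-order real derivative does not vanish, yielding a genuinely nonlinear feature. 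By stacking scaled translates of $\rr$ inside the scratch slots one then manufactures, to arbitrary accuracy, a chosen designer activation $\sigma$ (such as a modReLU- or $|z|^2$-type function) for which sufficient-width universality is already known or is straightforward to establish.

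Once the emulation lemma is available, the conclusion follows by a classical shallow-to-deep-narrow transfer: any $f \in C(K,\CC^m)$ is first approximated by a wide shallow $\sigma$-network, and each of its neurons is then realised in place inside the deep narrow $\rr$-network, with the $2m$ output registers accumulating the weighted neuron outputs and the $5$-slot scratch being reused across neurons. The delicacy I expect to consume most of the work is keeping every harvesting step strictly local to the scratch slots --- at possibly different base points, using only the minimal regularity of $\rr$ --- so that the input and output registers remain undisturbed and the width budget $2n+2m+5$ is never exceeded.
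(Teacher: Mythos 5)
There is a genuine gap at the central step of your sufficiency argument, and it occurs precisely for the activation functions that make this theorem harder than its real-valued analogue: the polyharmonic ones, e.g.\ $\rr(z)=\RE(z)^2$ or $\rr(z)=z\overline{z}$, which are admissible (neither holomorphic, nor antiholomorphic, nor $\RR$-affine). Your plan is to manufacture a designer activation $\sigma$ inside the five scratch slots and then transfer a \emph{wide shallow} $\sigma$-network into the deep narrow $\rr$-network. If $\sigma$ is non-polyharmonic (modReLU-type), the manufacturing step fails: any shallow block $z\mapsto\sum_j a_j\,\rr(b_jz+c_j)$ built from a polyharmonic $\rr$ of order $k$ is again polyharmonic of order at most $k$ (pre-composition with a $\CC$-affine map satisfies $\Delta(\rr(b\,\cdot+c))=\abss{b}^2(\Delta\rr)(b\,\cdot+c)$), and this class is closed under locally uniform limits, so no bounded-size gadget of $\rr$-neurons can approximate $\sigma$ on compacta; allowing unboundedly deep scratch gadgets instead would amount to invoking exactly the deep narrow universality you are trying to prove. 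If, on the other hand, $\sigma$ is of $\abss{z}^2$-type, then $\sigma$ is itself polyharmonic and, by Voigtlaender's theorem, shallow $\sigma$-networks are \emph{not} universal, so the shallow-to-deep transfer has nothing to transfer. This is why the paper splits the proof: for non-polyharmonic $\rr$ it combines the shallow universality theorem with a Kidger--Lyons register model and only the first-order building blocks (approximate $z$, $\overline{z}$, or the pair $(z,\overline{z})$), giving width $2n+2m+1$ (or $n+m+1$); for polyharmonic $\rr$ it abandons shallow universality altogether, uses the second-order Taylor expansion and a polarization gadget to approximate one of the multiplications $(z,w)\mapsto zw$, $z\overline{w}$, $\overline{zw}$ with $12$ neurons, iterates these in a register-like model to approximate all polynomials in $z_1,\dots,z_n,\overline{z_1},\dots,\overline{z_n}$, and concludes with Stone--Weierstrass; the budget $2n+2m+5$ comes out of that bookkeeping, not out of fitting a universal shallow block into the scratch slots.

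Two further points. First, your extraction of a ``genuinely nonlinear feature'' from the failure of $\RR$-affineness via a nonvanishing second-order derivative presupposes $\rr\in C^2$, which is not among the hypotheses (the theorem only assumes continuity plus real differentiability at a single point); second-order information is legitimately available only in the polyharmonic case, where $\rr$ is automatically $C^\infty$, while in the merely continuous case you must exploit non-polyharmonicity through the shallow universality theorem instead. Second, in the necessity part, ``all networks lie in one of the three classes and some continuous functions do not'' is not by itself enough, since universality concerns approximation rather than exact representation; you also need that each class is closed under locally uniform convergence (the Weierstrass convergence theorem for holomorphic/antiholomorphic limits, and an elementary argument for $\RR$-affine limits), after which your closure-under-composition argument is exactly the paper's.
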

Here $\mathcal{NN}^\rr_{n,m,W}$ denotes the set of complex-valued neural networks with input dimension $n$, output dimension $m$, activation function $\rr$, and $W$ neurons per hidden layer.
These neural networks are alternating compositions
\begin{equation}
V_L\circ \rr^{\times W}\circ \ldots \circ \rr^{\times W}\circ V_0: \quad \CC^n\to\CC^m
\end{equation}
of affine maps $V_0:\CC^n\to\CC^W$, $V_1,\ldots,V_{L-1}:\CC^W\to\CC^W$, $V_L:\CC^W\to\CC^m$,
 and componentwise applications of the activation function $\rr$, see \cref{chap:preliminaries} for a detailed definition.

Studying the expressivity of neural networks is an important part of the mathematical analysis of deep learning.
\cref{thm:intro} is a \emph{qualitative} result in that direction.
Such qualitative results naturally precede the investigation of approximation rates, i.e., the decay of approximation errors as the class of approximants increases.
Our focus is on qualitative results, but to show how our methods can also be used to derive quantitative bounds, 
we prove such a result for the case of smooth and non-polyharmonic activation functions, i.e.,
we derive an upper bound on the depth necessary to achieve an approximation accuracy less than $\eps$,
for a prescribed approximation accuracy $\eps>0$; see \cref{thm:quant}.

\subsection{Complex-valued neural networks}\label{sec:cvnn}
Although mostly real-valued neural networks (RVNNs) are used in the field of Deep Learning, recent years have shown a growing interest in the use of complex-valued neural networks in 
various application areas \cite{zhang2021optical,lee2022complex,lei2023fully,qu2023entanglement,arjovsky2016unitary}, 
for instance Magnetic Resonance Imaging (MRI) \cite{virtue2017better,kustner2020cinenet,cole2021analysis} and 
Polarimetric Synthetic Aperture Radar (PolSAR) Imaging \cite{zhang2017complex,ren2023new,barrachina2023comparison}.
These application areas are usually characterized by the fact that complex numbers naturally occur as inputs for machine learning models. 
In such areas, CVNNs are, in contrast to real-valued neural networks, able to handle the complex-valued nature of the inputs in a faithful way, for instance by using a phase-preserving activation function.
Note that this behavior cannot be achieved if one applies a non-trivial real-valued activation function to real and imaginary part of an input separately.

The identification
$\RR^2 \cong \CC$ might suggest that most theoretical properties of CVNNs can directly be derived by those of RVNNs.
However, the two network classes differ in the following two deciding aspects: 
\begin{itemize}
\item The activation function in a CVNN is a function $\varrho: \CC \to \CC$ (i.e., $\RR^2 \to \RR^2$), whereas the activation function in an RVNN is a function $\RR \to \RR$.
Hence, regarding the activation function, CVNNs are more \emph{versatile} than RVNNs. 
\item The affine maps in a CVNN are required to be $\CC$-affine, whereas the affine maps in an RVNN only need to be $\RR$-affine.
Therefore, regarding the affine maps, CVNNs are more \emph{restrictive} than RVNNs. 
\end{itemize}
The observation that CVNNs are on the one hand more versatile and on the other hand more restrictive than RVNNs shows that it is \emph{not}
 possible to obtain theoretical properties of 
CVNNs as a special case of those of RVNNs or vice versa. 
In fact, studying the universality of neural networks of fixed depth
 \cite{Pinkus1999,Voigtlaender2022} has already uncovered significant differences between RVNNs and CVNNs; see also \Cref{sec:contri}.

\subsection{Related work}\label{subsec:related}

In the neural network context, universal approximation theorems date back to the 1980s and 1990s \cite{Cybenko1989,Pinkus1999}, where it was shown that real-valued shallow neural networks with output dimension $1$ and a fixed continuous activation function are universal if and only if the activation function is not a polynomial.
Modifications of the setting in which universal approximation is studied appear in the neural network literature over the past decades.
These variants of the problem refer to, e.g., the input and the output dimension, the target space (typically $L_p$ for $1\leq p\leq \infty$, 
continuous functions, also modulo the action of a group), the choice of activation functions (only ReLU vs.\ any continuous non-polynomial function), 
constraints on either the width or depth of the neural network (narrow vs.\ wide, shallow vs.\ deep networks), or constraints on the norm or the sparsity of the weights, 
see for instance \cite{KidgerLy2020,Cai2022,ParkYuLeSh2020,LuPuWaHuWa2017,Yarotsky2022,MurataSo2017,IsmailovSa2023} and the references therein.
Furthermore, changes in the network architecture \cite{PetersenVo2020,Zhou2020}, the incorporation of randomness \cite{MerkhMo2019}, and changes 
in the nature of the inputs \cite{LiWa2011,Bueno2021} are also subjects of investigation in the literature on universal approximation.

Moreover, the literature contains numerous quantitative statements about the approximation properties of neural networks 
(see for instance \cite{barron1993universal,mhaskar1996neural,yarotsky2018optimal,petersen2018optimal}
and the references therein).
We explicitly mention the paper \cite{kratsios2022universal}, which provides quantitative approximation bounds for deep narrow RVNNs in terms of the depth
of the considered networks (see \cite[Proposition~53]{kratsios2022universal}).
In the present work, we prove a statement similar to \cite[Proposition~53(i)]{kratsios2022universal}
 for CVNNs with a smooth and non-polyharmonic activation function (see \cref{thm:quant}).

Remarkably, the theory mostly covers real-valued neural networks.
Yet, the fact that CVNNs are applied successfully in various application areas (see \cref{sec:cvnn})
motivates the theoretical study of their approximative capabilities.
To the best of our knowledge, the only qualitative and quantitative results in that direction are \cite{Voigtlaender2022,Park2022}, and \cite{CarageaLeMaPfVo2022,Geuchen2023}, respectively.
The article \cite{Voigtlaender2022} provides a characterization of activation functions, for which shallow CVNNs are universal.
This characterization is crucial for the purposes of the paper at hand and is therefore given in \cref{thm:felix} below.
For both the shallow and the deep narrow (real-valued and complex-valued) neural networks with analytic activation functions, it is shown in \cite{Park2022} that the closures of these classes in $C(K)$ coincides with the closure of polynomials, where $K$ is a compact subset of $\RR^n$ or $\CC^n$.
An application of the Stone--Weierstrass theorem or Mergelyan's theorem then yields universality of neural networks in the set of continuous functions in the real-valued case or holomorphic functions in the complex-valued case.
The authors of \cite{CarageaLeMaPfVo2022} prove quantitative bounds for the approximation of $C^k$-functions on $\CC^n$ using complex-valued neural networks with the modReLU activation function.
Those results have recently been generalized in \cite{Geuchen2023}, where the same approximation bounds have been proven for the rich class of complex-valued activation functions that are smooth and non-polyharmonic on some non-empty open set.
This class in particular includes the modReLU.

\subsection{Contribution}\label{sec:contri}

In the real-valued case,  under mild assumptions on their regularity, activation functions that yield universal neural networks have been characterized in the literature.
In the complex-valued case, however, such a characterization is only known in the case of neural networks with fixed depths and arbitrary widths.
To complete the picture, we give in \cref{thm:intro} a characterization of activation functions for which CVNNs with bounded widths and arbitrary depths are universal.

Recall that polynomial activation functions are precisely the ones for which real-valued neural networks with arbitrary widths and fixed depth are not universal, as 
shown by Kidger and Lyons in \cite[Section~1]{KidgerLy2020}.
Yet, in the dual situation, where depth is arbitrary and widths are bounded, polynomial activation functions (of minimum degree $2$) do give rise to universal real-valued neural networks, see again \cite[Theorem~3.2]{KidgerLy2020}.
The situation is different in the complex-valued case.
For continuous activation functions, Voigtlaender shows in \cite[Theorem~1.3]{Voigtlaender2022} that shallow CVNNs are universal if and only if the activation function is non-polyharmonic.
CVNNs of arbitrary widths and fixed depth $>1$ are universal if and only if the activation function does not coincide with a polynomial in $z$ and $\overline{z}$, and is neither holomorphic nor antiholomorphic, cf.~\cite[Theorem~1.4]{Voigtlaender2022}.
In the deep narrow regime studied in \cref{thm:intro}, the requirements on the activation function for universality are again weaker in the sense that not being a polynomial in $z$ and $\overline{z}$ is replaced by not being $\RR$-affine.

In this work, we consider continuous complex-valued activation functions which have non-vanishing derivative (in the sense of real variables) at some point.
Our analysis roughly splits into two parts: polyharmonic and non-polyharmonic activation functions.
While this distinction mainly impacts the proof techniques, the bounds on the widths of the CVNNs are actually governed by the properties of the Wirtinger derivatives of the activation function at the point of differentiability. 
\cref{fig:nutshell} is a graphical guide through our results.

For non-polyharmonic activation functions, we show universality of CVNNs with input dimension $n$ and output dimension $m$, where the number of neurons per hidden layer is $2n+2m+1$ or even $n+m+1$, see \cref {main_theorem_non_poly_classical_reg}.
The proofs in that case are based on the fact that for non-polyharmonic activation functions shallow CVNNs are universal, as shown by Voigtlaender in \cite[Theorem~1.3]{Voigtlaender2022}.
We combine this with an adaptation of the register model technique developed by Kidger and Lyons \cite[Theorem~3.2]{KidgerLy2020}.
There, they used this technique
to deduce universality of deep narrow \emph{real-valued} neural networks from the classical result on universality of shallow real-valued neural networks \cite{Cybenko1989,Pinkus1999}.
For polyharmonic activation functions, we show that CVNNs with input dimension $n$, output dimension $m$, and $2n+2m+5$ or even $n+m+3$ neurons per hidden layer are universal, see \cref{thm:main-polyharmonic}.
This is done by approximating polynomials in $z$ and $\overline{z}$ uniformly on compact sets and invoking the Stone--Weierstrass theorem.

Moreover, based on the ideas from \cite[Proposition~59]{kratsios2022universal}, we provide a quantitative approximation bound in terms of the depth of the considered networks for the case
of a smooth and non-polyharmonic activation function $\rr  \in C(\CC;\CC)$. 
Precisely, given a function $f \in C([-1,1]^n + \ii \cdot [-1,1]^n; \CC^m)$ and $\eps > 0$, we show that one can approximate $f$ up to precision $\eps$ with deep narrow networks using 
activation function $\rr$ and a depth of at most 
\begin{equation*}
\lr{ 32\cdot \left[\omega^{-1}\lr{f, \frac{\eps}{3 \cdot \sqrt{2m} \cdot \lr{1 + \frac{n}{2}}}}\right]^{-2} + 9}^{2n};
\end{equation*}
see \cref{thm:quant}.
Here, $\omega^{-1}(f, \cdot)$ denotes the inverse modulus of continuity of the function $f$, see \eqref{eq:inverse_mod}.
We believe that our techniques combined with those from \cite[Appendix~B.2]{kratsios2022universal} can also be used to derive depth estimates 
for deep narrow CVNNs for the case of more general activation functions 
$\rr \in C(\CC;\CC)$.
However, since the present paper focuses on the aspect of universality, this is left as future work.

\begin{figure}[H]
\centering
\begin{tikzpicture}[line cap=round,line join=round,>=stealth,x=0.8cm,y=0.8cm,post/.style={->, shorten >=0.5pt,shorten <=0.5pt}]
\begin{small}
\node (start) at (-2,0) {Is $\rr\in C(\CC;\CC)$ neither holomorphic, nor antiholomorphic, nor $\RR$-affine?};
\node (diff) at (-2,-2) {Is there $z_0\in\CC$ with $(\wirt\rr(z_0),\wirtq\rr(z_0))\neq (0,0)$?};
\node[draw,rectangle, text width = 2.1cm, align = center] (nonuniversal) at (5,-2) {no universality (\cref{thm:holo_anti_aff})};
\node (bothzero) at (-2,-4) {Is $\wirt\rr(z_0)=0$ or $\wirtq\rr(z_0)=0$?};
\node[draw,rectangle, text width = 2.1cm, align = center] (inconclusive) at (5,-4) {inconclusive (\cref{thm:necessary_diff})};
\node (poly1) at (-2,-6) {Is $\rr$ polyharmonic?};
\node (poly2) at (5,-6) {Is $\rr$ polyharmonic?};

\node[draw,rectangle, text width = 2.1cm, align = center] (width11) at (-3.5,-8) {$n+m+3$ (\cref{thm:main-polyharmonic})};
\node[draw,rectangle, text width = 2.1cm, align = center] (width12) at (-0.5,-8) {$n+m+1$ (\cref{main_theorem_non_poly_classical_reg})};
\node[draw,rectangle, text width = 2.1cm, align = center] (width21) at (3.5,-8) {$2n+2m+5$ (\cref{thm:main-polyharmonic})};
\node[draw,rectangle, text width = 2.1cm, align = center] (width22) at (6.5,-8) {$2n+2m+1$ (\cref{main_theorem_non_poly_classical_reg})};

\draw[post] (start)--(diff) node[midway,left]{yes};
\draw[post] (start)--(nonuniversal) node[midway,right=7pt]{no};
\draw[post] (diff)--(bothzero) node[midway,left]{yes};
\draw[post] (diff)--(inconclusive) node[midway,right=7pt]{no};
\draw[post] (bothzero)--(poly1) node[midway,left]{yes};
\draw[post] (bothzero)--(poly2) node[midway,right=7pt]{no};
\draw[post] (poly1)--(width11) node[midway,left]{yes};
\draw[post] (poly1)--(width12) node[midway,right]{no};
\draw[post] (poly2)--(width21) node[midway,left]{yes};
\draw[post] (poly2)--(width22) node[midway,right]{no};
\end{small}
\end{tikzpicture}
\caption{Our results in a nutshell.\label{fig:nutshell}}
\end{figure}
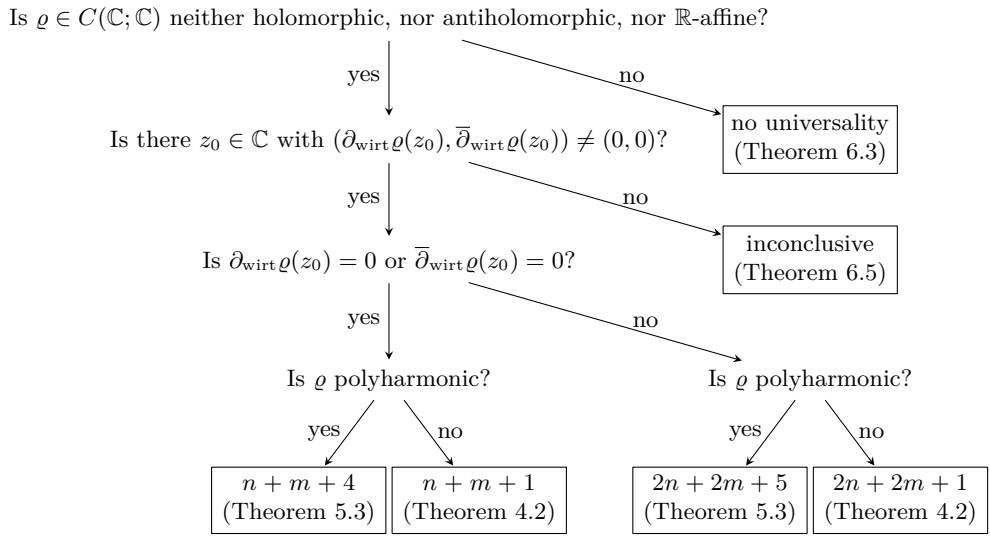

\subsection{Organization of our paper}
In \cref{chap:preliminaries} we fix our notation and recall some basics from complex and functional analysis and the theory of neural networks.
\cref{sec: building_blocks} introduces the register model and shows how the identity map on $\CC$ and complex conjugation can be approximated using CVNNs.
For non-polyharmonic functions, the proof of \cref{thm:intro} can be found in \cref{chap:non-polyharmonic}.
In \cref{chap:polyharmonic}, we present the proof of universality claimed in \cref{thm:intro} in the case of polyharmonic activation functions which are neither holomorphic, nor antiholomorphic, nor $\RR$-affine.
In \cref{sec:quant}, we prove the quantitative approximation bound in terms of the depth of the considered networks. 
In \cref{chap:optimality}, we show that CVNNs whose activation function is holomorphic or antiholomorphic or $\RR$-affine are never universal, 
regardless of the number of neurons per hidden layer.
Moreover, we show that there exist activation functions satisfying the assumptions from \cref{thm:intro} for which a width of $\max\setn{2n,2m}$ is necessary in order to provide 
universal CVNNs. 
In the appendix, we provide basics on the relationship between local uniform convergence and universal approximation, and on Taylor approximations in terms of Wirtinger derivatives.

\section{Preliminaries}\label{chap:preliminaries}
In this section, we recall facts from complex analysis, functional analysis, and the theory of neural networks behind the phrases in \cref{thm:intro}.
The presentation is loosely based on \cite[Chapter~7]{Rudin1976}, \cite[Chapter~11]{Rudin1987}, and \cite[Section~1]{GribonvalKuNiVo2022}.

\subsection{Complex and functional analysis}
We use the symbols $\NN$, $\RR$, and $\CC$ to denote the natural, real, and complex numbers, respectively.
By $\RE(z)$, $\IM(z)$, and $\overline{z}$, we denote the componentwise real part, imaginary part, and complex conjugate of a vector $z\in\CC^n$, respectively.
We call the function $\rr:\CC\to\CC$ \emph{partially differentiable} at $z_0$, if the \emph{partial derivatives}
\begin{align*}
\frac{\p \rr}{\p x}(z_0)\defeq\lim_{\RR\setminus \setn{0}\ni h\to 0}\frac{\rr(z_0+h)-\rr(z_0)}{h}\\
\text{and}\qquad\frac{\p \rr}{\p y}(z_0)\defeq\lim_{\RR\setminus\setn{0}\ni h\to 0}\frac{\rr(z_0+\ii h)-\rr(z_0)}{h}
\end{align*}
exist.
Higher-order partial derivates are defined in the standard manner.
We write $\rr\in C^k(\CC;\CC)$ if $\rr$ admits partial derivatives up to order $k$ at each point of $\CC$ and the $k$th-order partial derivatives are continuous functions $\CC\to\CC$.
Likewise, we write $\rr\in C^\infty(\CC;\CC)$ if $\rr\in C^k(\CC;\CC)$ for all $k\in\NN$.
If $\frac{\p \rr}{\p x}(z_0)$ and $\frac{\p \rr}{\p y}(z_0)$ exist and the identity
\begin{equation} \label{eq:real_diff}
\lim_{\CC\setminus\setn{0}\ni h\to 0}\frac{\rr(z_0+h)-\rr(z_0)-\frac{\p \rr}{\p x}(z_0)\RE(h)-\frac{\p \rr}{\p y}(z_0)\IM(h)}{h}=0
\end{equation}
holds true, then $\rr$ is called \emph{real differentiable at $z_0$} with derivative $(\frac{\p\rr}{\p x}(z_0),\frac{\p\rr}{\p y}(z_0))$.
Similarly, $\rr$ is called \emph{complex differentiable at $z_0$} if
\begin{equation*}
\lim_{\CC\setminus\setn{0}\ni h\to 0}\frac{\rr(z_0+h)-\rr(z_0)-ch}{h}=0
\end{equation*}
for some number $c\in\CC$, which in that case is given by
\begin{equation*}
\wirt \rr(z_0)\defeq \frac{1}{2}\lr{\frac{\p \rr}{\p x}(z_0)-\ii\frac{\p\rr}{\p y}(z_0)}.
\end{equation*}
Complex differentiability of $\rr$ at $z_0$ can be equivalently stated as
\begin{equation*}
\wirtq \rr(z_0)\defeq \frac{1}{2}\lr{\frac{\p \rr}{\p x}(z_0)+\ii\frac{\p\rr}{\p y}(z_0)}=0.
\end{equation*}
The differential operators $\wirt$ and $\wirtq$ are called \emph{Wirtinger derivatives}.
If $\wirtq\rr(z)=0$ for all $z\in\CC$, then $\rr$ is a \emph{holomorphic} function.
The function $\rr$ is called \emph{antiholomorphic} if the function $\overline{\rr}:\CC\to\CC$, $\overline{\rr}(z)\defeq\RE(\rr(z))- \ii \IM(\rr(z))$ is holomorphic or, equivalently, $\wirt\rr(z)=0$ for all $z\in\CC$.
As the linear operator that maps the partial derivatives onto the Wirtinger derivatives is invertible, it follows that $(\frac{\p\rr}{\p x}(z_0),\frac{\p\rr}{\p y}(z_0))=(0,0)$ if and only if $(\wirt\rr(z_0),\wirtq\rr(z_0))=(0,0)$.
Furthermore, the symmetry of mixed partial derivatives implies for $\rr\in C^2(\CC;\CC)$ that
\begin{equation}
4\wirt\wirtq\rr=4\wirtq\wirt\rr=\lr{\frac{\p^2}{\p x^2}+\frac{\p^2}{\p y^2}}\rr\eqdef \Delta \rr.\label{eq:laplace-wirtinger}
\end{equation}
If $\rr\in C^\infty(\CC;\CC)$ and $\Delta^m \rr=0$ for some $m\in\NN$, then $\rr$ is called \emph{polyharmonic of order $m$}.
Because of \eqref{eq:laplace-wirtinger}, holomorphic and antiholomorphic functions are \emph{harmonic}, i.e., polyharmonic of order $1$.

The following well-known lemma generalizes the classical real-valued Taylor expansion to the complex-valued setting; see \cref{app:taylor} for a proof.
\begin{Lem}
\label{thm:taylor}
Let $\rr\in C(\CC;\CC)$ and $z,z_0\in\CC$.
If $\rr$ is real differentiable at $z_0$, then
\begin{equation}
\label{eq:taylor-1}
\rr(z+z_0)=\rr(z_0)+\wirt\rr(z_0)z+\wirtq\rr(z_0)\overline{z}+\Theta_1(z)
\end{equation}
for a function $\Theta_1:\CC\to\CC$ with $\lim_{\CC\setminus\setn{0}\ni z\to 0}\frac{\Theta_1(z)}{z}=0$.
If  $\rr\in C^2(\CC;\CC)$, then
\begin{equation}
\label{eq:taylor-2}
\rr(z+z_0)=\rr(z_0)+\wirt\rr(z_0)z+\wirtq\rr(z_0)\overline{z}+\frac{1}{2}\wirt^2\rr(z_0)z^2+\wirt\wirtq\rr(z_0)z\overline{z}+\frac{1}{2}\wirtq^2\rr(z_0)\overline{z}^2+\Theta_2(z)
\end{equation}
for a function $\Theta_2:\CC\to\CC$ with $\lim_{\CC\setminus\setn{0}\ni z\to 0}\frac{\Theta_2(z)}{z^2}=0$.
\end{Lem}

On $\CC^m$, we consider the topology induced by the Euclidean norm
\begin{equation*}\mnorm{(z_1,\ldots,z_m)}_{\CC^m}=\lr{\sum_{j=1}^m \abs{z_j}^2}^{\frac{1}{2}}.
\end{equation*}
For $K\subseteq \CC^n$, we denote the vector space of continuous functions $K\to\CC^m$ by $C(K;\CC^m)$.
When $K\subseteq \CC^n$ is compact, the expression
\begin{equation*}
\mnorm{f}_{C(K;\CC^m)}\defeq\sup_{z\in K}\mnorm{f(z)}_{\CC^m}
\end{equation*}
defines a norm on $C(K;\CC^m)$, called the \emph{uniform norm}, which renders $C(K;\CC^m)$ a Banach space.
The convergence of a sequence $(f_j)_{j\in\NN}$ of elements $f_j\in C(K;\CC^m)$ to a limit $f\in C(K;\CC^m)$
with respect to $\mnorm{\cdot}_{C(K; \CC^m)}$ is written as \emph{$f_j\to f$ as $j\to\infty$, uniformly on $K$}.
Similarly, a sequence $(f_j)_{j \in \NN}$ of functions $f_j \in C(\CC^n; \CC^m)$ is said to converge \emph{locally uniformly} to a function $f \in C(\CC^n; \CC^m)$ if it converges uniformly to $f$ on every compact subset $K \subseteq \CC^n$.
Since compositions of continuous functions are continuous, we have $\mathcal{NN}^\rr_{n,m,W}\subseteq C(\CC^n;\CC^m)$ when $\rr\in C(\CC;\CC)$.
The main objective of the paper at hand is to show that under certain additional assumptions on $\rr$ and $W$, the elements of $\mathcal{NN}^\rr_{n,m,W}$ are arbitrarily close to the elements of $C(\CC^n;\CC^m)$ in the following sense.
\begin{Def}
\label{def:universal_approximation_property}
We say that a function class $\mathcal{F} \subseteq C(\CC^n;\CC^m)$ has the \emph{universal approximation property} or is \emph{universal}) if for every function $g \in C(\CC^n;\CC^m)$, every compact subset $K \subseteq \CC^n$ and every $\eps>0$ there exists a function $f\in\mathcal{F}$ such that
\begin{equation*}
\sup_{z\in K}\mnorm{f(z)-g(z)}_{\CC^m} < \eps.
\end{equation*}
\end{Def}
The universal approximation property of $\mathcal{F}$ is equivalent to saying that for every
given function $g \in C(\CC^n;\CC^m)$ there exists a sequence $(f_j)_{j \in \NN}$ with $f_j \in \mathcal{F}$ for every $j \in \NN$ that converges locally uniformly to $g$.
Likewise, this is equivalent to saying that the class $\mathcal{F}$ is dense in $C(\CC^n; \CC^m)$ with respect to the \emph{compact-open topology}.
We elaborate this equivalence in \cref{loc_uniform_convergence}. 
By $\overline{\mathcal{F}}$ we denote the closure of $\mathcal{F}$ with respect to the compact-open topology. 
Notice that we also use the notation $\overline{z}$ to denote the complex conjugate of $z \in \CC$.
The precise meaning will be clear from the context.

\subsection{Neural networks}
A (fully connected feed-forward) \emph{complex-valued neural network} (CVNN) is a function
\begin{equation*}
 V_L\circ \rr^{\times N_{L}} \ldots \circ \rr^{\times N_1}\circ V_0:\quad \CC^{N_0}\to\CC^{N_{L+1}}
\end{equation*}
where
\begin{itemize}
\item{$L\in\NN$ is called the \emph{depth} of the CVNN,}
\item{$N_j\in \NN$ is the \emph{width} of the $j$th layer,}
\item{$\max\setn{N_0,\ldots,N_{L+1}}$ is the \emph{width} of the CVNN,}
\item{$V_j:\CC^{N_{j}}\to \CC^{N_{j+1}}$ is a $\CC$-affine map, 
abbreviated as $V_j\in\Aff(\CC^{N_{j}};\CC^{N_{j+1}})$, i.e., there exist $A_j\in\CC^{N_{j+1}\times N_{j}}$ 
and $b_j\in\CC^{N_{j+1}}$ such that $V_j(z)=A_jz+b_j$ for all $z\in\CC^{\NN_{j}}$,}
\item{$\rr^{\times N_j}(z_1,\ldots,z_{N_j})=(\rr(z_1),\ldots,\rr(z_{N_j}))$ is the componentwise application of a (potentially non-affine) map $\rr:\CC\to\CC$ called the \emph{activation function}.}
\end{itemize}
We refer to the numbers $N_0$ and $N_{L+1}$ as the \emph{input dimension} and \emph{output dimension}, respectively.
The layers $1,\ldots,L$ are the \emph{hidden layers} of the CVNN.
Since it is always possible to pad matrices and vectors by additional zero rows and columnns, we may and will assume without loss of generality that $N_1=N_2=\ldots=N_{L}$.

We introduce a short-hand notation for the CVNNs that arise this way.
\begin{Def}\label{def:cvnn}
Let $n,m,W,L \in \NN$ and $\rr: \CC \to \CC$.
We denote by $\mathcal{NN}^\rr_{n,m,W,L}$ the set of CVNNs with depth $L$, input dimension $n$, output dimension $m$, and $N_j=W$ for $j\in\setn{1,\ldots,L}$.
In view of cases where the depth is not relevant, we let 
\begin{equation*}
\mathcal{NN}^{\rr}_{n,m,W} \defeq \bigcup_{L \in \NN} \mathcal{NN}^\rr_{n,m,W,L}.
\end{equation*}
\end{Def}
The elements of $\mathcal{NN}^\rr_{n,m,W}$ are thus alternating compositions
\begin{equation}
V_L \circ \rr^{\times W} \circ \ldots \circ \rr^{\times W} \circ V_0:\quad \CC^n\to\CC^m\label{eq:composition}
\end{equation}
of $\CC$-affine maps $V_j$ and the componentwise applications of the activation function $\rr$.
In the subsequent \cref{chap:polyharmonic,chap:non-polyharmonic}, we have $W\geq \max\setn{n,m}$, such that $W$ turns out to be the width of the neural networks under consideration.

A typical way of thinking about neural networks is viewing the component functions of $\rr^{\times N_{j+1}}\circ V_j$  as building blocks called \emph{neurons}.
Each neuron performs a computation of the form
\begin{equation*}
z\mapsto \rr(w^\top z + b)
\end{equation*}
where $z$ is the output of the previous layer, $w$ a vector of \emph{weights}, and $b$ a number called \emph{bias}.

Since the composition of affine maps is affine, it is also possible to think about neural networks as maps
\begin{align*}
	\big( \Psi_L \circ \rr^{\times W} \circ \Phi_{L} \big) \circ \big( \Psi_{L-1} \circ \rr^{\times W} \circ \Phi_{L-1} \big)\circ\ldots \circ \big( \Psi_1 \circ \rr^{\times W} \circ \Phi_1 \big),
\end{align*}
where each of the maps $\Phi_k, \Psi_k$ is affine. 
This allows to perceive shallow networks, see \cref{def:shallow_cvnn}, as building blocks for neural networks.
This point of view is similar to the notion of \emph{enhanced neurons} in \cite{KidgerLy2020}.

For a fixed activation function $\rr$, different choices of the $\CC$-affine functions $V_j$ may lead to the same composite function \eqref{eq:composition}.
In view of this, both depth and width of a CVNN are not properties of the function \eqref{eq:composition} but of the tuple $(V_1,\ldots,V_L)$.
For this reason, a different terminology is sometimes used in the literature where $(V_1,\ldots,V_L)$ is called the \emph{neural network} and \eqref{eq:composition} is its \emph{realization}, cf.\ \cite[Section~1]{GribonvalKuNiVo2022}.

Apart from the choice of the activation function $\rr$, restrictions on the depth or the width are common ingredients in the analysis of (fully connected feed-forward) neural networks.
A CVNN is called \emph{shallow} if its depth equals $1$, and \emph{deep} otherwise.
Since shallow networks play a special role in our analysis, we introduce an own notation for them. 
\begin{Def}\label{def:shallow_cvnn}
Let $n,m,W \in \NN$ and $\rr: \CC \to \CC$.
We denote by 
\begin{equation*}
\mathcal{SN}^\rr_{n,m,W} \defeq \mathcal{NN}^\rr_{n,m,W,1}
\end{equation*}
the set of \emph{shallow CVNNs} with $W$ hidden neurons.
We write 
\begin{equation*}
\mathcal{SN}^\rr_{n,m} \defeq \bigcup_{W \in \NN} \mathcal{SN}^\rr_{n,m,W}.
\end{equation*}
\end{Def}
In contrast to shallowness, \emph{narrowness} is not an individual property of CVNNs but a class property.
A set $\mathcal{F}$ of CVNNs is said to be \emph{narrow} if it does not contain CVNNs of arbitrarily large widths, i.e., 
if $\mathcal{F}\subseteq\mathcal{NN}^\rr_{n,m,W}$ for suitable $n,m,W\in\NN$ and $\rr:\CC\to\CC$.

\section{Building blocks and register model}
\label{sec: building_blocks}
In this section, we prove several preliminary statements that are crucial for the results derived in \cref{sec:quant,sec:main}.
Specifically, in \cref{subsec: building_blocks}, we construct building blocks to approximate elementary functions locally uniformly by shallow CVNNs. 
In \cref{subsec:register}, we consider the fundamental concept of the register model to transform shallow networks into deep narrow networks. 
\subsection{Building blocks}\label{subsec: building_blocks}

In this section we introduce various \emph{building blocks} for complex-valued networks, i.e., small neural network blocks that are able to represent elementary functions (e.g., the complex identity $\id_\CC$ or complex conjugation $\overline{\id_\CC}$) up to an arbitrarily small approximation error.
These building blocks are used in \cref{chap:polyharmonic,chap:non-polyharmonic} to construct the deep narrow networks that we use to approximate a given continuous function.
Throughout the chapter, we assume that the used activation function $\rr: \CC \to \CC$ is differentiable (in the real sense) at one point with non-vanishing derivative at that point.
In fact, the strategy for constructing these building blocks is always going to be similar: By using the first- and second-order Taylor expansion of the activation function $\rr$ as introduced in \cref{thm:taylor} one can localize the activation function around its point of differentiability where it behaves like a complex polynomial in $z$ and $\overline{z}$ of degree $1$ and $2$, respectively.
This enables us to extract elementary functions from that Taylor expansion.

\cref{prop: main} is fundamental for the universality results introduced in \cref{chap:polyharmonic,chap:non-polyharmonic}.
It shows that it is possible to uniformly approximate the complex identity or complex conjugation on compact sets using neural networks with a single hidden layer and width at most $2$.
If the activation function (or its complex conjugate) is not just real but even complex differentiable, the width can be reduced to $1$.
See \cref{fig:main} for an illustration of the building blocks.
\begin{Prop} \label{prop: main}
Let $\rr \in C(\CC;\CC)$.
Assume that there exists $z_0 \in \CC$ such that $\rr$ is real differentiable at $z_0$ with $(\wirt \rr(z_0),\wirtq\rr(z_0))\neq (0,0)$.
Furthermore, let $K \subseteq \CC$ be compact and $\eps > 0$.
\begin{enumerate}[label={(\roman*)},leftmargin=*,align=left,noitemsep]
\item{\label{prop: main item 1} If $\wirt \rr (z_0) \neq 0$ and $\wirtq \rr (z_0) = 0$, there exist $\phi, \psi \in \Aff(\CC;\CC)$ such that
\begin{equation*}
\sup_{z\in K}\abs{(\psi \circ \rr \circ \phi)(z) - z } < \eps.
\end{equation*}}
\item{\label{prop: main item 2} If $\wirt \rr (z_0) = 0$ and $\wirtq \rr (z_0) \neq 0$, there exist $\phi, \psi \in \Aff(\CC;\CC)$ such that
\begin{equation*}
\sup_{z\in K}\abs{ (\psi \circ \rr \circ \phi)(z) - \overline{z}} < \eps.
\end{equation*}}
\item{\label{prop: main item 3} If $\wirt \rr (z_0) \neq 0 \neq \wirtq \rr(z_0)$, there exist $\phi \in \Aff(\CC;\CC^2)$ and $\psi \in \Aff(\CC^2;\CC^2)$ such that
\begin{equation*}
\sup_{z\in K}\mnorm{ (\psi \circ \rr^{\times 2} \circ \phi)(z) - (z,\overline{z})}_{\CC^2} < \eps
\end{equation*}}
\end{enumerate}
\end{Prop}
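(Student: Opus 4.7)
The plan is to exploit the first-order Taylor expansion from \cref{thm:taylor}, namely
\[
\rr(z_0 + w) = \rr(z_0) + \wirt\rr(z_0)\,w + \wirtq\rr(z_0)\,\overline{w} + \Theta_1(w),
\]
with $\Theta_1(w)/w \to 0$ as $w \to 0$, and to localize $\rr$ near $z_0$ by composing with an affine map of the form $z \mapsto h z + z_0$ for a small real parameter $h > 0$. After this localization, $\rr(hz + z_0) - \rr(z_0)$ is a $\CC$-linear combination of $hz$ and $h\overline{z}$ up to a remainder of size $o(h)$ uniformly on the compact set $K$. The quantitative statement we need is that, setting $R \defeq \sup_{z \in K}\abs{z} < \infty$, the inequality $\abs{\Theta_1(w)} \leq \delta \abs{w}$ holds on a disk $\bo{0}{r}$, so choosing $h$ with $hR < r$ makes $\abs{\Theta_1(hz)} \leq \delta h R$ uniformly over $z \in K$.

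For parts \ref{prop: main item 1} and \ref{prop: main item 2}, one of the Wirtinger terms vanishes, so with $\phi(z) = hz + z_0$ we directly get $\rr(\phi(z)) = \rr(z_0) + c h \zeta + \Theta_1(hz)$, where $(c,\zeta)$ equals $(\wirt\rr(z_0),z)$ in case \ref{prop: main item 1} and $(\wirtq\rr(z_0),\overline{z})$ in case \ref{prop: main item 2} (using $\overline{hz} = h\overline{z}$ since $h \in \RR$). Post-composing with $\psi(u) \defeq (u - \rr(z_0))/(ch)$ yields $\psi(\rr(\phi(z))) - \zeta = \Theta_1(hz)/(ch)$, whose modulus is bounded by $\delta R/\abs{c}$, and this is smaller than $\eps$ for $\delta$ (hence $h$) chosen small enough.

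The substantive case is \ref{prop: main item 3}, where a single scalar evaluation only produces the mixture $\wirt\rr(z_0)\,hz + \wirtq\rr(z_0)\,h\overline{z}$, and since $\CC$-affine maps cannot implement complex conjugation, one cannot disentangle $z$ from $\overline{z}$ using a single neuron. The idea is to use two parallel neurons with different local scales: set $\phi(z) \defeq (hz + z_0,\, ihz + z_0) \in \Aff(\CC;\CC^2)$. Abbreviating $a \defeq \wirt\rr(z_0)$ and $b \defeq \wirtq\rr(z_0)$, and using $\overline{ih} = -ih$, the expansion yields
\[
\rr^{\times 2}(\phi(z)) - \pM{\rr(z_0) \\ \rr(z_0)} = \pM{a h & b h \\ i a h & -i b h} \pM{z \\ \overline{z}} + \pM{\Theta_1(hz) \\ \Theta_1(ihz)}.
\]
The $2\times 2$ matrix has determinant $-2 i a b h^2 \neq 0$, so letting $\psi \in \Aff(\CC^2;\CC^2)$ be the $\CC$-affine map that first subtracts $(\rr(z_0),\rr(z_0))^\top$ and then multiplies by the inverse of this matrix, we recover $(z,\overline{z})^\top$ plus a remainder controlled componentwise by $\abs{\Theta_1(hz)}/h$ and $\abs{\Theta_1(ihz)}/h$, both of which are at most $\delta R$ uniformly on $K$. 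Choosing $\delta$ small enough (via $h$ small enough) delivers the desired uniform estimate.

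The main obstacle is exactly the conjugation issue in case \ref{prop: main item 3}: one has to break the $\CC$-linearity of the affine layer by using two evaluations with scales that are not $\RR$-proportional. The choice $h$ and $ih$ is the minimal such choice, and the nonvanishing of $\IM(h\,\overline{ih}) = -h^2$ is precisely what ensures the system in $(z,\overline{z})$ is invertible; any other pair $h_1,h_2$ with $\IM(h_1\overline{h_2}) \neq 0$ would work as well.
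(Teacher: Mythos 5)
Your proposal is correct and follows essentially the same route as the paper's proof: localize via $z\mapsto hz+z_0$, use the first-order Taylor expansion with uniform control of $\Theta_1(hz)/h$ on $K$, and in case \ref{prop: main item 3} evaluate at the two points $z_0+hz$ and $z_0+\ii hz$ to separate $z$ from $\overline{z}$. The only cosmetic difference is that you phrase this separation as inverting the $2\times 2$ matrix $\bigl(\begin{smallmatrix} ah & bh\\ \ii ah & -\ii bh\end{smallmatrix}\bigr)$, whereas the paper writes out the resulting linear combinations explicitly.
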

\begin{proof}
	Recall that \cref{thm:taylor} yields the existence of a function $\Theta: \CC \to \CC$ satisfying $\lim\limits_{z \to 0} \frac{\Theta(z)}{z} = 0$ and
	\begin{equation*}
	\rr(z + z_0) = \rr(z_0)  + \wirt \rr (z_0) z + \wirtq \rr (z_0) \overline{z}+ \Theta(z)
	\end{equation*}
	for every $z \in \CC$.
	
	If $\wirtq \rr (z_0) = 0$ and $\wirt \rr(z_0) \neq 0$ we see for all $h > 0$ and $z \in K$ that
	\begin{equation} \label{eq:approx_ident}
		\frac{\rr(z_0 + hz) - \rr(z_0)}{\wirt \rr(z_0) h} = z + \frac{\Theta(hz)}{\wirt \rr (z_0) h }.
	\end{equation}
	Similarly, if $\wirtq \rr (z_0) \neq 0$ and $\wirt \rr(z_0) = 0$, we get for all $h>0$ and $z\in K$ that
	\begin{equation} \label{eq:approx_conj}
	 \frac{\rr(z_0 + hz) - \rr(z_0)}{\wirtq \rr(z_0) h} = \overline{z} + \frac{\Theta(hz)}{\wirtq \rr (z_0) h }.
	\end{equation}
	If $\wirt \rr (z_0) \neq 0 \neq \wirtq \rr (z_0)$, consider
	\begin{equation} \label{eq:approx_ident_2}
	\frac{\ii \rr(z_0 + hz) + \rr(z_0 + \ii hz)- (1+ \ii) \rr (z_0) }{2 \ii h \wirt \rr (z_0)} = z + \frac{\ii\Theta(hz) + \Theta(\ii hz)}{2 \ii h \wirt \rr (z_0)}
	\end{equation}
	as well as
	\begin{equation} \label{eq:approx_conj_2}
	 \frac{-\ii \rr(z_0 + hz) + \rr(z_0 + \ii hz)-(1-\ii)\rr (z_0)}{-2 \ii h \wirtq \rr (z_0)} = \overline{z} + \frac{-\ii\Theta(hz) + \Theta(\ii hz)}{-2 \ii h \wirtq \rr (z_0)}.
	\end{equation}
	It remains to show that the second summands on the right-hand sides of \eqref{eq:approx_ident}, \eqref{eq:approx_conj}, \eqref{eq:approx_ident_2}, and \eqref{eq:approx_conj_2} tend to $0$ as $h\downarrow 0$.
	Since $K$ is compact there exists $L > 0$ satisfying $\abs{z} \leq L$ for all $z \in K$.
	Let $\eps' > 0$ be arbitrary and take $\delta > 0$ such that
	\begin{equation*}
	\abs{\frac{\Theta(w)}{w}} < \frac{\eps'}{L}
	\end{equation*}
	for every $w \in \CC \setminus \setn{0}$ with $\abs{w} < \delta$.
	Let $h \in (0, \delta /L)$ and $z \in K$.
	Since $\abs{hz} < \delta$ we see for every $z \in K \setminus \setn{0}$ that
	\begin{equation*}
	\abs{\frac{\Theta(hz)}{h}} \leq L \cdot \abs{\frac{\Theta (hz)}{hz}} < \eps'
	\end{equation*}
	and since $\eps'$ has been taken arbitrarily
	\begin{equation*}
	\lim_{h \downarrow 0}\sup_{z \in K} \frac{\Theta(hz)}{h} = 0.
	\end{equation*}
	Here, we concluded $\Theta (0) =0$ from \eqref{eq:taylor-1} to also cover the case $z=0$.
\end{proof}

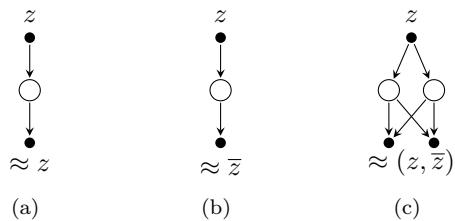
\begin{figure}[H]
 \centering
\subfigure[\label{z}]{
\begin{tikzpicture}[x=1.0cm,y=0.7cm,>=stealth,neuron/.style={circle,draw=black,inner sep=0pt,minimum size=8pt},
transition/.style={circle,fill=black,inner sep=0pt,minimum size=4pt},
pre/.style={->, shorten >=0.5pt,shorten <=0.5pt},
post/.style={->, shorten >=0.5pt,shorten <=0.5pt},
peri/.style={->, shorten >=0.5pt,shorten <=0.5pt}]
\clip (-0.8,-2.75) rectangle (0.8,0.55);

\node[transition,label=above:$z$] (in1) at (0,0){};
\node[neuron] (hidden1) at (0,-1){};
\node[transition,label=below:$\approx z$] (out1) at (0,-2){};

\draw[pre] (in1)--(hidden1);
\draw[post] (hidden1)--(out1);
\end{tikzpicture}
}\qquad
\subfigure[\label{zquer}]{
\begin{tikzpicture}[x=1.0cm,y=0.7cm,>=stealth,neuron/.style={circle,draw=black,inner sep=0pt,minimum size=8pt},
transition/.style={circle,fill=black,inner sep=0pt,minimum size=4pt},
pre/.style={->, shorten >=0.5pt,shorten <=0.5pt},
post/.style={->, shorten >=0.5pt,shorten <=0.5pt},
peri/.style={->, shorten >=0.5pt,shorten <=0.5pt}]
\clip (-0.8,-2.75) rectangle (0.8,0.55);

\node[transition,label=above:$z$] (in1) at (0,0){};
\node[neuron] (hidden1) at (0,-1){};
\node[transition,label=below:$\approx \overline{z}$] (out1) at (0,-2){};

\draw[pre] (in1)--(hidden1);
\draw[post] (hidden1)--(out1);
\end{tikzpicture}
}\qquad
\subfigure[\label{zzquer}]{
\begin{tikzpicture}[x=1.0cm,y=0.7cm,>=stealth,neuron/.style={circle,draw=black,inner sep=0pt,minimum size=8pt},
transition/.style={circle,fill=black,inner sep=0pt,minimum size=4pt},
pre/.style={->, shorten >=0.5pt,shorten <=0.5pt},
post/.style={->, shorten >=0.5pt,shorten <=0.5pt},
peri/.style={->, shorten >=0.5pt,shorten <=0.5pt}]
\clip (-0.8,-2.75) rectangle (0.8,0.55);

\node[transition,label=above:$z$] (in1) at (0,0){};
\node[neuron] (hidden1) at (-0.3,-1){};
\node[neuron] (hidden2) at (0.3,-1){};
\node[transition] (out1) at (-0.3,-2){};
\node[transition] (out2) at (0.3,-2){};

\draw[pre] (in1)--(hidden1);
\draw[pre] (in1)--(hidden2);
\draw[post] (hidden1)--(out1);
\draw[post] (hidden2)--(out1);
\draw[post] (hidden1)--(out2);
\draw[post] (hidden2)--(out2);
\draw ($(0,-2)+(0,-0.8)$) node[above]{$\approx (z,\overline{z})$};
\end{tikzpicture}
}
\caption{Illustration of the neural network building blocks from \cref{prop: main}.
Neurons in the input and output layers are depicted in filled dots at the top and bottom, respectively.
Applications of the activation function $\rr$ are shown as circles.\label{fig:main}}
\end{figure}

\cref{prop: approx} is important for the case of polyharmonic activation functions which is considered in \cref{chap:polyharmonic}.
It essentially states that, given an activation function which is not $\RR$-affine, 
one can approximate one of the functions $z \mapsto z \overline{z}$, $\ z \mapsto z^2$ or $z \mapsto \overline{z}^2$ by
 using a shallow neural network of width $4$, see \cref{fig:approx} for an illustration.
\begin{Prop} \label{prop: approx}
Let 
\begin{equation*}
f_1: \quad \CC \to \CC, \quad f_1(z) = z \overline{z}, \quad f_2: \quad \CC \to \CC, \quad f_2(z) = z^2, \quad \text{and} \quad 
f_3: \quad \CC \to \CC, \quad f_3(z) = \overline{z}^2.
\end{equation*}
Moreover, let $\rr \in C^2(\CC;\CC)$ be not $\RR$-affine.
Then there exists a function $f  \in \setn{f_1,f_2,f_3}$ with the following property:
For every compact subset $K \subseteq \CC$ and every $\eps>0$
there exist affine maps $\phi \in \Aff(\CC;\CC^4)$ and $\psi \in \Aff(\CC^4;\CC)$ such that 
\begin{align*}
\sup_{z\in K}\abs{(\psi \circ \rr^{\times 4} \circ \phi) (z) - f(z)}&< \eps
\end{align*}
holds true.
\end{Prop}
\begin{proof}
Since $\rr$ is not $\RR$-affine, there exists $z_0 \in \CC$ such that $\wirt^2 \rr (z_0) \neq 0$, $\wirt \wirtq \rr (z_0) \neq 0$, or $\wirtq^2\rr(z_0) \neq 0$, see, e.g., \cref{app:helpproof}.
Using the second-order Taylor expansion stated in \cref{thm:taylor}, there exists a function $\Theta: \CC \to \CC$ satisfying $\lim\limits_{w \to 0} \frac{\Theta(w)}{w^2} = 0$ and
\begin{align*}
\rr(z_0+w) = \rr (z_0) &+ \wirt \rr (z_0) w + \wirtq \rr(z_0) \overline{w} + \frac{1}{2} \wirt^2 \rr (z_0)w^2 + \wirt \wirtq \rr (z_0)w \overline{w}\\
& + \frac{1}{2} \wirtq^2 \rr(z_0) \overline{w}^2 + \Theta(w)
\end{align*}
for every $w \in \CC$.
Applying this identity to $-w$ in place of $w$ and adding up, we infer for any $w \in \CC$ that
\begin{equation*}
\rr (z_0 + w) + \rr (z_0 - w) = 2\rr(z_0) + \wirt^2 \rr(z_0) w^2 + 2 \wirt \wirtq \rr(z_0) w \overline{w} + \wirtq^2\rr (z_0) \overline{w}^2 + \Theta(w) + \Theta(-w).
\end{equation*}

Let $h > 0$ and $z \in K$.
If $\wirt \wirtq \rr (z_0) \neq 0$, we see with $w=hz$ and $w=\ii hz$ that

\begin{align}
&\norel \frac{\rr(z_0 + hz) + \rr (z_0 - hz) + \rr(z_0 + \ii hz) + \rr(z_0 - \ii hz) -4\rr(z_0) }{4 h^2 \wirt\wirtq \rr(z_0)} \nonumber\\
\label{eq:z^2}
&=z \overline{z} + \frac{\Theta(hz) + \Theta(-hz) + \Theta(\ii hz) + \Theta(- \ii hz)}{4 h^2 \wirt\wirtq \rr (z_0)}.
\end{align}
If $\wirt \wirtq \rr (z_0) = 0$ and $\wirt^2 \rr (z_0) \neq 0$, consider $w=hz$ and $w=\sqrt{\ii}hz$, where $\sqrt{\ii}$ is a fixed square root of $\ii$:
\begin{align}
&\norel \frac{\rr(z_0 +hz) + \rr(z_0 - hz) -\ii \rr(z_0 + \sqrt{\ii}hz) - \ii\rr(z_0 - \sqrt{\ii}hz) + 2(-1+\ii) \rr(z_0) }{2h^2 \wirt^2 \rr(z_0)}\nonumber \\
\label{eq:zzbar}
&=z^2 + \frac{\Theta(hz) + \Theta(-hz) -\ii \Theta (\sqrt{\ii}hz) -\ii \Theta(-\sqrt{\ii}hz)}{2h^2 \wirt^2 \rr(z_0)}.
\end{align}
Last, if $\wirt^2 \rr(z_0) = \wirt \wirtq \rr(z_0) = 0$, consider $w=hz$:
\begin{equation} \label{eq:zbar^2}
\frac{\rr(z_0 + hz) + \rr(z_0 - hz)- 2\rr(z_0) }{h^2 \wirtq^2 \rr (z_0)}= \overline{z}^2 + \frac{\Theta(hz) + \Theta(-hz)}{h^2 \wirtq^2 \rr(z_0)}.
\end{equation}
It remains to show that the second summands on the right-hand sides of \eqref{eq:z^2}, \eqref{eq:zzbar}, and \eqref{eq:zbar^2} tend to $0$ as $h\downarrow 0$.
Since $K$ is bounded, there exists $L> 0$ with $\abs{z} \leq L$ for every $z \in K$.
For given $\eps' > 0$ there exists $\delta > 0$ such that
\begin{equation*}
\abs{\frac{\Theta(w)}{w^2}} < \frac{\eps'}{L^2}
\end{equation*}
for every $w \in \CC \setminus \setn{0}$ with $\abs{w} < \delta$.
Hence, we see for every $h \in (0, \delta/L)$ and all $z \in K \setminus \setn{0}$ that
\begin{equation*}
\abs{\frac{\Theta(hz)}{h^2}} \leq L^2 \cdot \abs{\frac{\Theta(hz)}{(hz)^2}} < \eps'
\end{equation*}
where we used that $\abs{hz} < \delta$.
Therefore, we conclude
\begin{equation*}
	\lim_{h \downarrow 0}\sup_{z \in K} \frac{\Theta(hz)}{h^2} = 0,
\end{equation*}
using $\Theta(0)= 0$ from \eqref{eq:taylor-2} to also cover the case $z=0$.
\end{proof}

\begin{figure}[H]
\centering
\begin{tikzpicture}[x=1.0cm,y=0.8cm,>=stealth,neuron/.style={circle,draw=black,inner sep=0pt,minimum size=8pt},
transition/.style={circle,fill=black,inner sep=0pt,minimum size=4pt},
pre/.style={->, shorten >=0.5pt,shorten <=0.5pt},
post/.style={->, shorten >=0.5pt,shorten <=0.5pt},
peri/.style={->, shorten >=0.5pt,shorten <=0.5pt}]

\node[transition,label=above:$z$] (in1) at (0,0){};
\node[neuron] (hidden1) at (-0.9,-1){};
\node[neuron] (hidden2) at (-0.3,-1){};
\node[neuron] (hidden3) at (0.3,-1){};
\node[neuron] (hidden4) at (0.9,-1){};
\node[transition,label=below:{$\approx z^2$ or $\approx \overline{z}^2$ or $\approx z\overline{z}$}] (out1) at (0,-2){};

\draw[pre] (in1)--(hidden1);
\draw[pre] (in1)--(hidden2);
\draw[pre] (in1)--(hidden3);
\draw[pre] (in1)--(hidden4);
\draw[post] (hidden1)--(out1);
\draw[post] (hidden2)--(out1);
\draw[post] (hidden3)--(out1);
\draw[post] (hidden4)--(out1);
\end{tikzpicture}
\caption{Illustration of the neural network building block from \cref{prop: approx}.
Neurons in the input and output layers are depicted in filled dots at the top and bottom, respectively.
Applications of the activation function $\rr$ are shown as circles.\label{fig:approx}}
\end{figure}
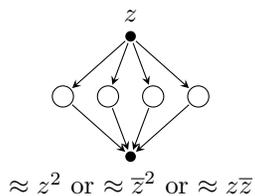

In order to approximate arbitrary polynomials in the variables $z_1,\ldots,z_n$ and $\overline{z_1},\ldots,\overline{z_n}$, we will compute iterative products of two complex numbers in \cref{thm:main-polyharmonic}.
The following result enables the approximation of such products.
An illustration of the CVNN blocks appearing in the proof are given in \cref{fig: multiplication_approx}.
\begin{Prop} \label{prop: multiplication_approx} \label{prop: mul_approx}
Let 
\begin{align*}
\mul_1:\CC\times \CC\to\CC,\qquad &\mul_1(z,w)=zw,\\
\mul_2:\CC\times \CC\to\CC,\qquad &\mul_2(z,w)=z\overline{w},\\
\mul_3:\CC\times \CC\to\CC,\qquad &\mul_3(z,w)=\overline{z}\overline{w}.
\end{align*}
Moreover, let $\rr \in C^2(\CC;\CC)$ be not $\RR$-affine.
Then there exists $\mul  \in \setn{\mul_1, \mul_2, \mul_3}$ with the following property: 
For every compact subset $K \subseteq \CC^2$ and $\eps > 0$
there exist $\phi \in \Aff(\CC^2;\CC^{12})$ and $\psi \in \Aff(\CC^{12};\CC)$ such that 
\begin{align*}
\sup_{(z,w)\in K}\abs{(\psi \circ \rr^{\times 12} \circ \phi) (z, w) - \mul(z,w) } &< \eps
\end{align*}
holds true.
\end{Prop}
\begin{proof}
 The main steps of the proof are to use a variant of the polarization identity to reconstruct the three multiplication operators from their values on the diagonal, and then to apply \cref{prop: approx} to approximate the latter.

Precisely, the construction is as follows: If $\zeta \mapsto \zeta \overline{\zeta}= \abs{\zeta}^2$ can be approximated according to the first case of \cref{prop: approx}, use the identity
 \begin{align*}
\lr{\frac{1}{4} + \frac{\ii}{4}} \abs{z + w}^2 + \lr{-\frac{1}{4} + \frac{\ii}{4}} \abs{z - w}^2 - \frac{\ii}{2} \abs{z - \ii w}^2 = z\overline{w}.
\end{align*}
Thus, in order to approximate $(z,w) \mapsto z \overline{w}$, one needs $4$ hidden neurons to approximate $\zeta \mapsto \abs{\zeta}^2$ for each of the $3$ linear combinations of $z$ and $w$, resulting in a total amount of $12$ hidden neurons.

If we have the second case of \cref{prop: approx}, we can approximate $\zeta \mapsto \zeta^2$ using $4$ hidden neurons.
In this case, consider
\begin{align*}
\frac{1}{4} \left[ (z+w)^2 - (z-w)^2\right] = zw,
\end{align*}
so that in total one needs $8$ neurons in order to approximate $(z,w) \mapsto zw$.
In the last case of \cref{prop: approx} we can approximate $\zeta \mapsto \overline{\zeta}^2$ using $4$ hidden neurons.
Considering
\begin{equation*}
\frac{1}{4} \left[\overline{(z+w)}^2 -  \overline{(z-w)}^2\right] = \overline{zw},
\end{equation*}
we infer that $(z,w) \mapsto \overline{zw}$ can be approximated using $8$ hidden neurons.
\end{proof}
It remains open whether the number $12$ in \cref{prop: multiplication_approx} can be reduced.
However, it should be noted that the exact number is not crucial for the final result \cref{thm:main-polyharmonic}, since in its proof
the shallow networks obtained in \Cref{prop: multiplication_approx}
are transformed to deep narrow networks according to \Cref{register_model,prop:register_to_full} and the width of these networks is in fact independent of 
the number of hidden neurons in the original
shallow networks.

\begin{figure}[H]
\centering
\subfigure[\label{threecomb}]{
\centering
\begin{tikzpicture}[x=0.9cm,y=0.63cm,>=stealth,neuron/.style={circle,draw=black,inner sep=0pt,minimum size=8pt},
transition/.style={circle,fill=black,inner sep=0pt,minimum size=4pt},
pre/.style={->, shorten >=0.5pt,shorten <=0.5pt},
post/.style={->, shorten >=0.5pt,shorten <=0.5pt},
peri/.style={->, shorten >=0.5pt,shorten <=0.5pt}]

\node[transition,label=above:$z$] (init1) at (1.2,1){};
\node[transition,label=above:$w$] (init2) at (3.6,1){};
\node[transition] (in1) at (0,0){};
\node[transition] (in2) at (2.4,0){};
\node[transition] (in3) at (4.8,0){};
\node[neuron] (hidden1) at (-0.9,-1){};
\node[neuron] (hidden2) at (-0.3,-1){};
\node[neuron] (hidden3) at (0.3,-1){};
\node[neuron] (hidden4) at (0.9,-1){};
\node[neuron] (hidden5) at (1.5,-1){};
\node[neuron] (hidden6) at (2.1,-1){};
\node[neuron] (hidden7) at (2.7,-1){};
\node[neuron] (hidden8) at (3.3,-1){};
\node[neuron] (hidden9) at (3.9,-1){};
\node[neuron] (hidden10) at (4.5,-1){};
\node[neuron] (hidden11) at (5.1,-1){};
\node[neuron] (hidden12) at (5.7,-1){};
\node[transition] (out1) at (0,-2){};
\node[transition] (out2) at (2.4,-2){};
\node[transition] (out3) at (4.8,-2){};
\node[transition,label=below:{$\approx z\overline{w}$}] (final) at (2.4,-3){};

\draw[peri] (init1)--(in1);
\draw[peri] (init1)--(in2);
\draw[peri] (init1)--(in3);
\draw[peri] (init2)--(in1);
\draw[peri] (init2)--(in2);
\draw[peri] (init2)--(in3);
\draw[pre] (in1)--(hidden1);
\draw[pre] (in1)--(hidden2);
\draw[pre] (in1)--(hidden3);
\draw[pre] (in1)--(hidden4);
\draw[pre] (in2)--(hidden5);
\draw[pre] (in2)--(hidden6);
\draw[pre] (in2)--(hidden7);
\draw[pre] (in2)--(hidden8);
\draw[pre] (in3)--(hidden9);
\draw[pre] (in3)--(hidden10);
\draw[pre] (in3)--(hidden11);
\draw[pre] (in3)--(hidden12);
\draw[post] (hidden1)--(out1);
\draw[post] (hidden2)--(out1);
\draw[post] (hidden3)--(out1);
\draw[post] (hidden4)--(out1);
\draw[post] (hidden5)--(out2);
\draw[post] (hidden6)--(out2);
\draw[post] (hidden7)--(out2);
\draw[post] (hidden8)--(out2);
\draw[post] (hidden9)--(out3);
\draw[post] (hidden10)--(out3);
\draw[post] (hidden11)--(out3);
\draw[post] (hidden12)--(out3);
\draw[peri] (out1)--(final);
\draw[peri] (out2)--(final);
\draw[peri] (out3)--(final);
\end{tikzpicture}
}\qquad
\subfigure[\label{twocomb}]{
\centering
\begin{tikzpicture}[x=0.9cm,y=0.63cm,>=stealth,neuron/.style={circle,draw=black,inner sep=0pt,minimum size=8pt},
transition/.style={circle,fill=black,inner sep=0pt,minimum size=4pt},
pre/.style={->, shorten >=0.5pt,shorten <=0.5pt},
post/.style={->, shorten >=0.5pt,shorten <=0.5pt},
peri/.style={->, shorten >=0.5pt,shorten <=0.5pt}]

\node[transition,label=above:$z$] (init1) at (0,1){};
\node[transition,label=above:$w$] (init2) at (2.4,1){};
\node[transition] (in1) at (0,0){};
\node[transition] (in2) at (2.4,0){};
\node[neuron] (hidden1) at (-0.9,-1){};
\node[neuron] (hidden2) at (-0.3,-1){};
\node[neuron] (hidden3) at (0.3,-1){};
\node[neuron] (hidden4) at (0.9,-1){};
\node[neuron] (hidden5) at (1.5,-1){};
\node[neuron] (hidden6) at (2.1,-1){};
\node[neuron] (hidden7) at (2.7,-1){};
\node[neuron] (hidden8) at (3.3,-1){};
\node[transition] (out1) at (0,-2){};
\node[transition] (out2) at (2.4,-2){};
\node[transition,label=below:{$\approx zw$ or $\approx \overline{zw}$}] (final) at (1.2,-3){};

\draw[peri] (init1)--(in1);
\draw[peri] (init1)--(in2);
\draw[peri] (init2)--(in1);
\draw[peri] (init2)--(in2);
\draw[pre] (in1)--(hidden1);
\draw[pre] (in1)--(hidden2);
\draw[pre] (in1)--(hidden3);
\draw[pre] (in1)--(hidden4);
\draw[pre] (in2)--(hidden5);
\draw[pre] (in2)--(hidden6);
\draw[pre] (in2)--(hidden7);
\draw[pre] (in2)--(hidden8);
\draw[post] (hidden1)--(out1);
\draw[post] (hidden2)--(out1);
\draw[post] (hidden3)--(out1);
\draw[post] (hidden4)--(out1);
\draw[post] (hidden5)--(out2);
\draw[post] (hidden6)--(out2);
\draw[post] (hidden7)--(out2);
\draw[post] (hidden8)--(out2);
\draw[peri] (out1)--(final);
\draw[peri] (out2)--(final);
\end{tikzpicture}
}
\caption{Illustration of the neural network building block from \cref{prop: multiplication_approx}.
Neurons in the input and output layers are depicted in filled dots at the top and bottom, respectively.
Applications of the activation function $\rr$ are shown as circles.
From the input values $z$ and $w$, three or two linear combinations are computed.
Then the building block from \cref{fig:approx} is inserted to approximate $z\mapsto z^2$, $z\mapsto \overline{z}^2$, or $z\mapsto z\overline{z}$.
The results are again combined linearly.\label{fig: multiplication_approx}}
\end{figure}
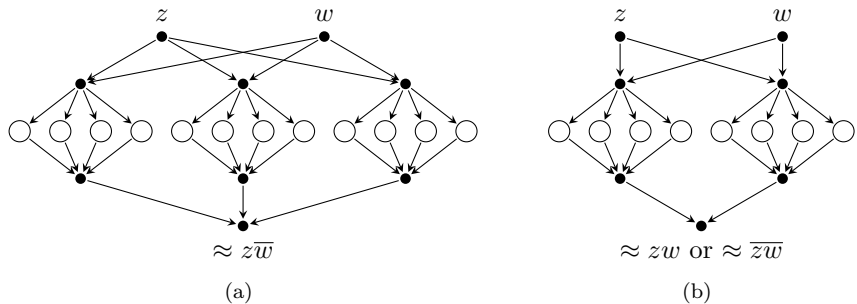
\subsection{Register model}\label{subsec:register}
In this section, we introduce the fundamental concept of the \emph{register model}.
This construction has been heavily used in \cite{KidgerLy2020} to prove the real-valued counterpart of the theorem established in the present paper.
\begin{Def}
Let $n,m,W,L\in \NN$ and $\rr:\CC \to \CC$.
  Denote by $\mathcal{I}^\rr_{n,m,W,L}$ the set of \emph{register models}
  \begin{equation*}
		T_L \circ \tilde{\rr}\circ \ldots \circ \tilde{\rr} \circ T_0,
  \end{equation*}
  where $T_0 \in \Aff(\CC^n;\CC^{W})$, $T_L \in \Aff(\CC^{W};\CC^m)$, $T_\ell \in \Aff(\CC^{W};\CC^{W})$
   and
  \begin{equation*}
    \tilde{\rr}: \CC^{W} \to \CC^{W}, \quad \lr{\tilde{\rr}(z_1,\ldots,z_{W})}_j = \begin{cases}\rr(z_{W}),& j = W,\\ z_j,&j \neq W.\end{cases}
  \end{equation*}
  In view of cases where the depth $L$ does not matter, we set 
  \begin{equation*}
  \mathcal{I}^{\rr}_{n,m,W} \defeq \bigcup_{L \in \NN} \mathcal{I}^{\rr}_{n,m,W,L}.
  \end{equation*}
\end{Def}
\begin{Bem}\label{rem:register}
The set $\mathcal{I}^{\rr}_{n,m,W,L}$ may be viewed as the set of CVNNs with $n$ input neurons, $m$ output neurons, a width of $W$ and 
a depth of $L$, where in every hidden layer the first $W-1$ neurons use the identity as activation function and in the last neuron, $\rr$ is used as activation function. 
In fact, since we can apply permutations to the entries of a vector before and after each layer, it is irrelevant in which neuron the application of $\rr$ takes place. 
We choose the last neuron for convenience. 
\end{Bem}
One can transform a shallow network into a deep narrow register model by \enquote{flipping} the shallow network and only performing one computation per layer.
This is formalized in \cref{register_model} and illustrated in \cref{fig:register}.
\begin{Prop}
    \label{register_model}
    Let $n,m,L \in \NN$.
  Let $f \in \mathcal{SN}^\rr_{n,m,L}$ and let $\tilde{f}: \CC^n \to \CC^n \times \CC^m, \ \tilde{f}(z) \defeq (z, f(z))$. 
  Then we have $\tilde{f} \in \mathcal{I}^\rr_{n,n+m,n+m+1,L}$. 
	In particular, we have $\mathcal{SN}^\rr_{n,m,L} \subseteq \mathcal{I}^\rr_{n,m,n+m+1,L}$.
\end{Prop}

\begin{proof}
Let $f\in\mathcal{SN}^\rr_{n,m}$.
Then there exist $V_1\in\Aff(\CC^n;\CC^{L})$, and $V_2\in\Aff(\CC^{L};\CC^m)$ such that $f = V_2 \circ \rr^{\times L} \circ V_1$.
For $j\in\setn{1,\ldots, m}$, the $j$th component function $f_j$ of $f$ can be written as
\begin{equation*}
f_j(z) = \lr{\sum_{k=1}^{L} c_{k,j} \rr(a_k^\top z + b_k)} + d_j
\end{equation*}
with suitably chosen $a_k \in \CC^n$ and $c_{k,j}, b_k, d_j \in \CC$.
We define
\begin{equation*}
T_0:\CC^n \to \CC^n \times \CC \times \CC^m, \quad T_1(z) = (z,  a_1^\top z + b_1, 0).
\end{equation*}
For $\ell\in\setn{1,\ldots,L-1}$, we define
\begin{equation*}
T_\ell:\CC^n \times \CC \times \CC^m \to \CC^n \times \CC \times \CC^m, \quad  T_\ell(z,u,(w_1,\ldots,w_m)) =
 \begin{pmatrix}z\\ a_{\ell+1}^\top z + b_{\ell+1} \\w_1 + c_{\ell, 1}u \\ w_2 + c_{\ell, 2} u \\ \vdots\\ w_m + c_{\ell , m}u\end{pmatrix}.
\end{equation*}
Last, we set
\begin{equation*}
T_{L}: \CC^n \times \CC \times \CC^m \to \CC^n \times \CC^m , \quad  T_{L}(z,u,(w_1,\ldots,w_m)) = 
\begin{pmatrix}z \\w_1 + c_{L, 1}u + d_1\\ w_2 + c_{L, 2} u + d_2\\ \vdots\\ w_m + c_{L, m}u + d_m\end{pmatrix}.
\end{equation*}
Then we have
\begin{equation}\label{eq:register}
\tilde{f} = T_{L} \circ \tilde{\rr} \circ \ldots \circ \tilde{\rr} \circ T_0
\end{equation}
with 
\begin{equation*}
\tilde{\rr}: \CC^{n+m+1} \to \CC^{n+m+1}, \quad \lr{\tilde{\rr}(z_1,\ldots,z_{n+m+1})}_j = \begin{cases}\rr(z_{n+1})&\text{if } j = n+1,\\ z_j&\text{if }j \neq n+1,\end{cases}
\end{equation*}
which clearly yields $\tilde{f} \in \mathcal{I}^\rr_{n,n+m,n+m+1,L}$ (see \cref{rem:register}).
The second part of the statement follows by applying a projection onto the last $m$ coordinates after the last layer. 
\end{proof}
We illustrate the proof of \cref{register_model} in the following.
To this end, let us adopt some terminology from \cite[Proof of Proposition~4.6]{KidgerLy2020}.
The neurons that use $\rr$ as the activation function (the one with index $n+1$ in each layer) shall be referred to as \emph{computation neurons}.
We call the neurons with indices $<n+1$ in each layer the \emph{in-register neurons}.
Here the inputs are just passed through the different layers unaltered.
In other words, the restriction of $\tilde{\rr} \circ \ldots \circ \tilde{\rr} \circ T_0$ in \eqref{eq:register} to the components with indices $<n+1$ is just the identity on $\CC^n$.
The remaining neurons (the ones with indices $>n+1$ in each layer) are called \emph{out-register neurons}.
Here the outputs of the computation neurons are assembled to form the final outputs.

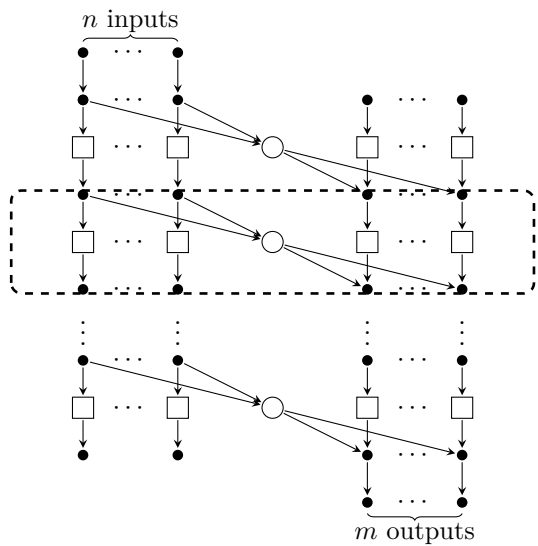
\begin{figure}[H]
\centering
\begin{tikzpicture}[x=0.9cm,y=0.63cm,>=stealth,neuron/.style={circle,draw=black,inner sep=0pt,minimum size=8pt},
transition/.style={circle,fill=black,inner sep=0pt,minimum size=4pt},
ident/.style={regular polygon sides=4,draw=black,inner sep=0pt,minimum size=8pt},
pre/.style={->, shorten >=0.5pt,shorten <=0.5pt},
post/.style={->, shorten >=0.5pt,shorten <=0.5pt},
peri/.style={->, shorten >=0.5pt,shorten <=0.5pt}]

\node[transition] (init1) at (-2.8,1){};
\node[transition] (init2) at (-1.4,1){};
\node[transition] (in1) at (-2.8,0){};
\node[transition] (in2) at (-1.4,0){};
\node[transition] (out1) at (-2.8,0){};
\node[transition] (in3) at (1.4,0){};
\node[transition] (in4) at (2.8,0){};

\node[ident] (ir11) at (-2.8,-1){};
\node[ident] (ir12) at (-1.4,-1){};
\node[neuron] (c1) at (0,-1){};
\node[ident] (or11) at (1.4,-1){};
\node[ident] (or12) at (2.8,-1){};

\node[transition] (ir21) at (-2.8,-2){};
\node[transition] (ir22) at (-1.4,-2){};
\node[transition] (or21) at (1.4,-2){};
\node[transition] (or22) at (2.8,-2){};

\node[ident] (ir31) at (-2.8,-3){};
\node[ident] (ir32) at (-1.4,-3){};
\node[neuron] (c3) at (0,-3){};
\node[ident] (or31) at (1.4,-3){};
\node[ident] (or32) at (2.8,-3){};

\node[transition] (ir41) at (-2.8,-4){};
\node[transition] (ir42) at (-1.4,-4){};
\node[transition] (or41) at (1.4,-4){};
\node[transition] (or42) at (2.8,-4){};

\node[transition] (ir51) at (-2.8,-5.5){};
\node[transition] (ir52) at (-1.4,-5.5){};
\node[transition] (or51) at (1.4,-5.5){};
\node[transition] (or52) at (2.8,-5.5){};

\node[ident] (ir61) at (-2.8,-6.5){};
\node[ident] (ir62) at (-1.4,-6.5){};
\node[neuron] (c6) at (0,-6.5){};
\node[ident] (or61) at (1.4,-6.5){};
\node[ident] (or62) at (2.8,-6.5){};

\node[transition] (ir71) at (-2.8,-7.5){};
\node[transition] (ir72) at (-1.4,-7.5){};
\node[transition] (or71) at (1.4,-7.5){};
\node[transition] (or72) at (2.8,-7.5){};

\node[transition] (out1) at (-2.8,-8.5){};
\node[transition] (out2) at (-1.4,-8.5){};
\node[transition] (out4) at (1.4,-8.5){};
\node[transition] (out5) at (2.8,-8.5){};

\draw ($(init1)!0.5!(init2)$) node{$\cdots$};
\draw ($(in1)!0.5!(in2)$) node{$\cdots$};
\draw ($(in3)!0.5!(in4)$) node{$\cdots$};
\draw ($(ir11)!0.5!(ir12)$) node{$\cdots$};
\draw ($(ir21)!0.5!(ir22)$) node{$\cdots$};
\draw ($(ir31)!0.5!(ir32)$) node{$\cdots$};
\draw ($(ir41)!0.5!(ir42)$) node{$\cdots$};
\draw ($(ir51)!0.5!(ir52)$) node{$\cdots$};
\draw ($(ir61)!0.5!(ir62)$) node{$\cdots$};
\draw ($(or11)!0.5!(or12)$) node{$\cdots$};
\draw ($(or21)!0.5!(or22)$) node{$\cdots$};
\draw ($(or31)!0.5!(or32)$) node{$\cdots$};
\draw ($(or41)!0.5!(or42)$) node{$\cdots$};
\draw ($(or51)!0.5!(or52)$) node{$\cdots$};
\draw ($(or61)!0.5!(or62)$) node{$\cdots$};
\draw ($(or71)!0.5!(or72)$) node{$\cdots$};
\draw ($(out4)!0.5!(out5)$) node{$\cdots$};
\draw ($(ir41)!0.5!(ir51)$) node{$\vdots$};
\draw ($(ir42)!0.5!(ir52)$) node{$\vdots$};
\draw ($(or41)!0.5!(or51)$) node{$\vdots$};
\draw ($(or42)!0.5!(or52)$) node{$\vdots$};

 \draw [decoration={brace,raise=2pt},decorate] (init1.north) -- (init2.north) node [above=2pt,pos=0.5] {$n$ inputs};
 \draw [decoration={brace,mirror,raise=2pt},decorate] (out4.south) -- (out5.south) node [below=2pt,pos=0.5] {$m$ outputs};
  \draw [decoration={brace,mirror,raise=2pt},decorate] (out1.south) -- (out2.south) node [below=2pt,pos=0.5] {$n$ outputs};

\draw[peri] (init1)--(in1);
\draw[peri] (init2)--(in2);
\draw[pre] (in1)--(ir11);
\draw[pre] (in2)--(ir12);
\draw[pre] (in3)--(or11);
\draw[pre] (in4)--(or12);
\draw[post] (ir11)--(ir21);
\draw[post] (ir12)--(ir22);
\draw[pre] (ir21)--(ir31);
\draw[pre] (ir22)--(ir32);
\draw[post] (ir31)--(ir41);
\draw[post] (ir32)--(ir42);
\draw[pre] (ir51)--(ir61);
\draw[pre] (ir52)--(ir62);

\draw[post] (in1)--(c1);
\draw[post] (in2)--(c1);
\draw[post] (ir21)--(c3);
\draw[post] (ir22)--(c3);
\draw[post] (ir51)--(c6);
\draw[post] (ir52)--(c6);

\draw[pre] (c1)--(or21);
\draw[pre] (c1)--(or22);
\draw[pre] (c3)--(or41);
\draw[pre] (c3)--(or42);
\draw[pre] (c6)--(or71);
\draw[pre] (c6)--(or72);

\draw[post] (or11)--(or21);
\draw[post] (or12)--(or22);
\draw[pre] (or21)--(or31);
\draw[pre] (or22)--(or32);
\draw[post] (or31)--(or41);
\draw[post] (or32)--(or42);
\draw[pre] (or51)--(or61);
\draw[pre] (or52)--(or62);
\draw[post] (ir61)--(ir71);
\draw[post] (ir62)--(ir72);
\draw[post] (or61)--(or71);
\draw[post] (or62)--(or72);

\draw[peri] (ir71)--(out1);
\draw[peri] (ir72)--(out2);
\draw[peri] (or71)--(out4);
\draw[peri] (or72)--(out5);

\draw[rounded corners,dashed,line width=1pt] ($(ir41.south west)+(-1,0)$) rectangle ($(or22.north east)+(1,0)$);

\end{tikzpicture}
\caption{Illustration of the register model from \cref{register_model}.
Neurons where the complex identity is used as activation function are visualized as squares, whereas neurons using $\rr$ as activation function are visualized using circles.
The in-register neurons (squares on the left) store the input values and pass them to the computation neurons (middle circles).
The result of the computations are added up and stored in the out-register neurons (squares on the right).
The dashed box highlights one of the blocks that are later replaced using approximations of the complex identity.\label{fig:register}}
\end{figure}

The following proposition enables us to approximate CVNNs that use $\overline{\rr}$ as activation function locally uniformly by CVNNs that use $\rr$ 
as activation function, if there exists a point $z_0 \in \CC$ with $\wirt \rr(z_0) = 0 \neq \wirtq \rr(z_0)$.

\begin{Prop}\label{prop:conj_universal}
Let $\rr \in C(\CC;\CC)$ and $n,m,W,L \in \NN$.
Assume that there exists $z_0 \in \CC$ such that $\rr$ is real differentiable at $z_0$ with 
\begin{equation*}
\wirt \rr(z_0) = 0 \quad \text{and} \quad \wirtq \rr(z_0) \neq 0.
\end{equation*}
Then 
\begin{equation*}
\mathcal{NN}^{\overline{\rr}}_{n,m,W,L} \subseteq \overline{\mathcal{NN}^{\rr}_{n,m,W,2L}},
\end{equation*}
where the closure is taken with respect to the compact-open topology.
\end{Prop}
\begin{proof}
 Let $f \in \mathcal{NN}^{\overline{\rr}}_{n,m,W}$ be arbitrary and consider the decomposition
	\begin{equation*}
	f = V_L \circ \overline{\rr}^{\times W} \circ V_{L-1} \circ \ldots \circ \overline{\rr}^{\times W} \circ V_0
	\end{equation*}
	with $V_0 \in \Aff(\CC^n; \CC^{W})$, $V_\ell \in \Aff(\CC^{W}; \CC^{W})$, and $V_L \in \Aff(\CC^{W}; \CC^m)$ for every $\ell\in\setn{1,\ldots,L-1}$.
	From \Cref{prop: main}\ref{prop: main item 2} and \cref{prop:equivalence} we infer the existence of sequences $(\Phi_j)_{j \in \NN}$ and $(\Psi_j)_{j \in \NN}$ of 
	affine maps $\Phi_j ,\Psi_j\in \Aff(\CC^{W}; \CC^{W})$ for $j \in \NN$ such that
	\begin{equation*}
	\Psi_j \circ \rr^{\times W} \circ \Phi_j \xrightarrow{j\to \infty} \overline{\id_\CC}^{\times W}
	\end{equation*}
	locally uniformly.
	But then we see
	\begin{equation*}
	V_L \circ \lr{ \Psi_j \circ \rr^{\times W} \circ \Phi_j} \circ \rr^{\times W}\circ V_{L-1} \circ \ldots \circ \lr{ \Psi_j \circ \rr^{\times W} \circ \Phi_j} \circ \rr^{\times W}\circ V_0 \xrightarrow{j\to \infty}f
	\end{equation*}
	locally uniformly, where the left-hand side is an element of $\mathcal{NN}^\rr_{n,m,W,2L}$ for every $j \in \NN$.
	Here we applied \Cref{prop:comp_loc_conv}.
	The claim then follows from \cref{prop:comp_open_loc_unif_conv}.
\end{proof}
Using the previous proposition, we can now show that every register model of width $W$ can be approximated by CVNNs, where in \emph{every} hidden neuron the function $\rr$ is used as activation function. 
To do so, it is necessary to approximate the identity connections that appear in the register model.
By assumption, the activation function $\rr$ is real differentiable at some point $z_0\in\CC$ with non-zero derivative.
We consider three different cases.
First, if $\rr$ is even \emph{complex} differentiable at $z_0$, \cref{prop: main} yields an approximation of $\id_\CC$ using shallow CVNNs with activation function $\rr$ and a width of $1$.
In that case, replacing the identity connections in the register model by these approximations results in a sufficient width of $W$.
Second, if $\overline{\rr}$ is complex differentiable at $z_0$, we use \cref{prop:conj_universal} to get the desired result. 
Last, if neither of the two former cases happens, we show that a width of $2W-1$ is sufficient for CVNNs with activation function $\rr$ in order to be universal, by using the third building block described in \cref{prop: main}.
Note that in each layer, there are $W-1$ identity connections to be replaced (resulting in a width of $2(W-1)= 2W-2$ for the identity connections) and one additional application of the activation function $\rr$, which in total gives us a width of $2W-1$.
\begin{Prop}\label{prop:register_to_full}
Let $\varrho \in C(\CC;\CC)$. 
Assume that there exists a point $z_0 \in \CC$ such that $\rr$ is real differentiable at $z_0$ with non-zero derivative. 
Let $n,m,W,L \in \NN$.
	\begin{enumerate}[label={(\roman*)},leftmargin=*,align=left,noitemsep]
		\item{\label{item:1}
		If $\wirt \rr(z_0) \neq 0$ and $\wirtq\rr(z_0) = 0$, then $\mathcal{I}^\rr_{n,m,W,L} \subseteq \overline{\mathcal{NN}^\rr_{n,m,W,L}}$.}
		\item{\label{item:2}
		If $\wirt \rr(z_0) = 0$ and $\wirtq\rr(z_0) \neq 0$, then $\mathcal{I}^\rr_{n,m,W,L} \subseteq \overline{\mathcal{NN}^\rr_{n,m,W,2L}}$.}
		\item{\label{item:3}
		If $\wirt \rr(z_0) \neq 0$ and $\wirtq\rr(z_0) \neq 0$, then $\mathcal{I}^\rr_{n,m,W,L} \subseteq \overline{\mathcal{NN}^\rr_{n,m,2W - 1,L}}$.}
	\end{enumerate}
	Here, the closure is taken with respect to the compact-open topology. 
\end{Prop}
\begin{proof}
Let $g \in \mathcal{I}^{\rr}_{n,m,W,L}$.
This means that there exist maps $T_0 \in \Aff(\CC^n;\CC^{W})$, $T_L \in \Aff(\CC^{W};\CC^m)$, and $T_\ell \in \Aff(\CC^{W};\CC^{W})$ for $\ell\in\setn{1,\ldots,L-1}$ such that
	\begin{equation*}
		g = T_L \circ \tilde{\rr} \circ T_{L-1} \circ \ldots \circ \tilde{\rr} \circ T_0,
	\end{equation*}
	where $\tilde{\rr}:\CC^{W} \to \CC^{W}$ is given by
	\begin{equation*}
		\lr{\tilde{\rr}(z_1,\ldots,z_{W})}_j = \begin{cases}z_j&\text{if }j \in\setn{1,\ldots, W-1}, \\ \rr(z_{W})&\text{if } j =W.\end{cases}
	\end{equation*}
	We first prove \ref{item:1}.
	Combining \cref{prop: main}\ref{prop: main item 1} and \cref{prop:equivalence}, we deduce the existence of sequences $(\Psi_j)_{j \in \NN}$ and $(\Phi_j)_{j \in \NN}$ with $\Psi_j, \Phi_j \in \Aff(\CC^{W};\CC^{W})$ for $j\in\NN$, satisfying
	\begin{equation*}
		\Psi_j \circ \rr^{\times W} \circ \Phi_j
		\xrightarrow{j\to \infty} \tilde{\rr}
	\end{equation*}
	locally uniformly.
	\cref{prop:comp_loc_conv} now implies that the sequence $(g_j)_{j\in\NN}$ given by
	\begin{align*}
		g_j = T_L \circ \lr{\Psi_j \circ \rr^{\times W} \circ \Phi_j} \circ T_{L-1} \circ \ldots \circ  \lr{\Psi_j \circ \rr^{\times W} \circ \Phi_j} \circ T_0
	\end{align*}
	converges locally uniformly to $g$ as $j\to\infty$.
	Moreover, since the composition of affine maps is affine, we have $g_j \in \mathcal{NN}^\rr_{n,m,W,L}$ for all $j\in\NN$.
	Claim \ref{item:1} now follows from \cref{prop:comp_open_loc_unif_conv}.
	
	 We now deal with \ref{item:2}.
	By the fundamental properties of Wirtinger derivatives (see for instance \cite[E.~1a]{kaup_holomorphic_1983}) we compute 
	\begin{equation*}
	\wirt \overline{\rr}(z_0) = \overline{\wirtq \rr (z_0)} \neq 0 \quad\text{and}\quad \wirtq \overline{\rr}(z_0) = \overline{\wirt \rr (z_0)} = 0.
	\end{equation*}
	From \ref{item:1} we infer $\mathcal{I}^\rr_{n,m,W,L} \subseteq \overline{\mathcal{NN}^{\overline{\rr}}_{n,m,W,L}}$.
	A direct application of \cref{prop:conj_universal} yields \ref{item:2}.

	The proof of \ref{item:3} is analogous to the proof of \ref{item:1}.
	Gluing together one copy of the complex identity $\id_\CC$ and $W-1$ copies of the function $\phi$ or 
	(the projection onto the first component of) $\psi$ of \cref{prop: main}\ref{prop: main item 3}, respectively,
	 we construct sequences $(\Psi_j)_{j \in \NN},(\Phi_j)_{j \in \NN}$ satisfying $\Psi_j\in\Aff(\CC^{2W-1};\CC^{W})$
	  and $\Phi_j\in\Aff(\CC^{W};\CC^{2W-1})$ for $j\in\NN$ such that
	\begin{equation*}
		\Psi_j \circ \rr^{\times (2W - 1)} \circ \Phi_j
		\xrightarrow{j\to \infty}\tilde{\rr}
	\end{equation*}
	locally uniformly.
	Because each of the building blocks consists of $2$ neurons in this case,
	 the resulting approximating neural network has width $2W - 1$ instead of $W$.
\end{proof}

\section{Proof of the main result}\label{sec:main}
In this section, we prove \cref{thm:intro}.
The analysis is split into the case of non-polyharmonic activation functions (see \cref{chap:non-polyharmonic}) and polyharmonic activation functions (see \cref{chap:polyharmonic}).
\subsection{The non-polyharmonic case}
\label{chap:non-polyharmonic}
When the activation function $\rr$ is not polyharmonic, the universal approximation theorem for shallow CVNNs from \cite[Theorem~1.3]{Voigtlaender2022} is applicable.
For convenience, we state the following special case relevant for our investigations.
\begin{Satz}
\label{thm:felix}
Let $n\in\NN$ and $\rr \in C(\CC;\CC)$.
Then $\mathcal{SN}^\rr_{n,1}$ is universal if and only if $\rr$ is not polyharmonic.
\end{Satz}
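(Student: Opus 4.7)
The proof splits into two directions. The necessity direction (universality $\Rightarrow$ non-polyharmonicity) is a soft structural argument exploiting the fact that polyharmonicity is preserved by the operations used to build shallow networks and by locally uniform limits. The sufficiency direction (non-polyharmonicity $\Rightarrow$ universality) is the substantive one: the plan is to extract every monomial in the variables $z_j$ and $\overline{z_j}$ from $\rr$ by iterated finite differences and then invoke the Stone--Weierstrass theorem.

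\textbf{Necessity.} For $a\in\CC^n$ and $b\in\CC$, set $g_{a,b}(z)\defeq\rr(a^\top z+b)$. A direct chain-rule calculation in the real $2n$ coordinates of $\CC^n$ yields $\Delta g_{a,b}(z)=\mnorms{a}_{\CC^n}^2\,(\Delta\rr)(a^\top z+b)$, and by iteration $\Delta^m g_{a,b}(z)=\mnorms{a}^{2m}\,(\Delta^m\rr)(a^\top z+b)$. Hence, if $\rr$ is polyharmonic of order $m$, every realization of a shallow CVNN, being a finite linear combination of maps $g_{a,b}$ plus a constant, satisfies $\Delta^m f=0$. The operator $\Delta^m$ is elliptic, so its distributional kernel consists of smooth functions (elliptic regularity) and is closed under locally uniform convergence; consequently the locally uniform closure of $\mathcal{SN}^\rr_{n,1}$ in $C(\CC^n;\CC)$ lies in this proper subspace, which rules out universality.

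\textbf{Sufficiency.} I would first reduce to the case $\rr\in C^\infty(\CC;\CC)$ by mollifying $\rr$ with a smooth compactly supported bump: the mollifications are locally uniform limits of Riemann sums of translates of $\rr$ and therefore lie in the closure of $\mathcal{SN}^\rr_{n,1}$, and a distributional duality argument shows that if every such mollification were polyharmonic of a fixed order, then so would be $\rr$. With $\rr$ smooth and non-polyharmonic, I would extract each monomial $z^{\alpha}\overline{z}^{\beta}\defeq\prod_{j=1}^n z_j^{\alpha_j}\overline{z_j}^{\beta_j}$ from the closed linear span of $\setn{z\mapsto\rr(a^\top z+b) : a\in\CC^n,\,b\in\CC}$ as follows: iterated complex finite differences of $\rr(a^\top z+b)$ in the parameter $a$ around $a=0$, combined with the real/imaginary separation trick of \cref{prop: main} that disentangles the $h$- and $\overline{h}$-contributions in each difference, produce locally uniform limits of the form $c_{\alpha,\beta,b}\cdot z^{\alpha}\overline{z}^{\beta}$, where $c_{\alpha,\beta,b}$ is the corresponding mixed Wirtinger derivative of $\rr$ evaluated at $b$. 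Once every monomial lies in the closure, the Stone--Weierstrass theorem (the algebra of polynomials in $z$ and $\overline{z}$ separates points of any compact $K\subseteq\CC^n$, contains constants, and is conjugation-closed) delivers density in $C(K;\CC)$.

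\textbf{Main obstacle.} The delicate step is exhibiting, for every pair of multi-indices $(\alpha,\beta)$, some $b\in\CC$ at which the corresponding Wirtinger derivative of $\rr$ is nonzero. Non-polyharmonicity provides this only on the diagonal: since $\Delta^m=4^m\,\wirt^m\wirtq^m$ in one complex variable, the hypothesis only yields points $b_m$ with $\wirt^m\wirtq^m\rr(b_m)\neq 0$. Off-diagonal cases $\alpha\neq\beta$ require an extra argument; a clean route is to prove the contrapositive that if some pure Wirtinger derivative $\wirt^{k_0}\wirtq^{l_0}\rr$ vanished identically, then $\rr$ would admit an Almansi-type representation and hence be polyharmonic. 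Alternatively, one can bypass the combinatorial bookkeeping by a Hahn--Banach argument: any compactly supported complex measure $\mu$ annihilating $\mathcal{SN}^\rr_{n,1}$ satisfies $\int\rr(a^\top z+b)\,\dd\mu(z)=0$ for every $(a,b)$, and differentiating at $a=0$ produces moment equations whose simultaneous vanishing, combined with the nonvanishing of enough mixed Wirtinger derivatives of $\rr$, forces $\mu=0$.
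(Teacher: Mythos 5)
A point of orientation first: the paper does not prove \cref{thm:felix} at all---it is quoted verbatim as a special case of \cite[Theorem~1.3]{Voigtlaender2022}, so there is no in-paper proof to compare against; your proposal is in effect a reconstruction of that external argument (which itself adapts the Leshno--Lin--Pinkus--Schocken strategy to the complex setting). Your necessity direction is correct and complete in outline: the identity $\Delta^m\bigl(\rr(a^\top\cdot+b)\bigr)=\mnorms{a}_{\CC^n}^{2m}\,(\Delta^m\rr)(a^\top\cdot+b)$ is right, and since $\Delta^m$ is hypoelliptic its distributional kernel is closed under locally uniform limits, so the closure of $\mathcal{SN}^\rr_{n,1}$ stays inside a proper subspace of $C(\CC^n;\CC)$.

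The sufficiency direction has the right skeleton (mollify, differentiate $\rr(a^\top z+b)$ in the parameter $a$ at $a=0$ to produce the monomials $z^\alpha\overline{z}^\beta$, then Stone--Weierstrass), but the reduction to smooth $\rr$ is genuinely incomplete as written. What you need is: if $\rr$ is not polyharmonic, then \emph{some} mollification $\rr*\phi$ is not polyharmonic. You only assert the easy implication under the extra hypothesis that all mollifications are polyharmonic \emph{of a fixed order}; a priori the orders $m(\phi)$ could be unbounded, and then no naive duality or limiting argument applies. The standard repair is a Baire category argument on the Fr\'echet space of test functions supported in a fixed compact set: the sets $\setcond{\phi}{\Delta^m(\rr*\phi)=0}$ are closed linear subspaces whose union is the whole space, hence one of them is everything, which yields a uniform order $m_0$; then $\Delta^{m_0}\rr=0$ in the sense of distributions and hypoellipticity gives that $\rr$ is smooth and polyharmonic. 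Without this uniformization step the reduction is a gap. By contrast, what you single out as the \enquote{main obstacle} is not one: since Wirtinger derivatives commute on smooth functions, $\wirt^{k}\wirtq^{l}\rr\equiv 0$ immediately gives, with $M=\max\setn{k,l}$, that $\Delta^{M}\rr=4^{M}\,\wirt^{M-k}\wirtq^{M-l}\bigl(\wirt^{k}\wirtq^{l}\rr\bigr)\equiv 0$, so non-polyharmonicity directly supplies, for every pair $(k,l)$, a point $b$ with $\wirt^{k}\wirtq^{l}\rr(b)\neq 0$---no Almansi-type representation is needed, and the alternative Hahn--Banach moment sketch at the end is too vague to serve as a substitute for either step.
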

Since local uniform convergence on $\CC^m$ is equivalent to componentwise local uniform convergence, we conclude that, 
if $\rr$ is non-polyharmonic, the set $\mathcal{SN}^\rr_{n,m}$ is universal for every $m \in \NN$.

Since the set of shallow networks with non-polyharmonic activation function 
is universal and each shallow network can be represented by a suitable register model (see \cref{register_model}), which in turn can be locally uniformly
approximated by deep narrow CVNNs (see \cref{prop:register_to_full}), we get the following result.  
\begin{Satz}\label{main_theorem_non_poly_classical_reg}
	Let $n,m \in \NN$.
	Assume that $\rr \in C(\CC;\CC)$ is not polyharmonic and that there exists $z_0 \in \CC$ such that $\rr$ is real differentiable at $z_0$ with non-zero derivative.
	\begin{enumerate}[label={(\roman*)},leftmargin=*,align=left,noitemsep]
		\item{\label{main_theorem_non_poly_classical_reg_item1}
		If $\wirt \rr(z_0) \neq 0$ and $\wirtq\rr(z_0) = 0$, then the set $\mathcal{NN}^\rr_{n,m,n+m+1}$ is universal.}
		\item{\label{main_theorem_non_poly_classical_reg_item11}
		If $\wirt \rr(z_0) = 0$ and $\wirtq\rr(z_0) \neq 0$, then the set $\mathcal{NN}^\rr_{n,m,n+m+1}$ is universal.}
		\item{\label{main_theorem_non_poly_classical_reg_item2}
		If $\wirt \rr(z_0) \neq 0$ and $\wirtq\rr(z_0) \neq 0$, then the set $\mathcal{NN}^\rr_{n,m,2n+2m+1}$ is universal.}
	\end{enumerate}

\end{Satz}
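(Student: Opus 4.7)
The plan is a three-part construction. In each of parts (i) and (iii), the idea is to start from the universality of shallow CVNNs (\cref{thm:felix}), unfold a shallow approximant of the target function as a register model via \cref{register_model}, and then replace the identity slots of that register model by small $\rr$-only building blocks supplied by \cref{prop: main}. Part (ii) is reduced to part (i) by passing through the conjugated activation $\overline{\rr}$.

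Concretely, for part (i), given $g\in C(\CC^n;\CC^m)$, a compact $K\subseteq\CC^n$, and $\eps>0$, I would first apply \cref{thm:felix} componentwise to obtain a shallow CVNN $f_{\mathrm{sn}}\in\mathcal{SN}^\rr_{n,m}$ with $\sup_{z\in K}\mnorm{f_{\mathrm{sn}}(z)-g(z)}_{\CC^m}<\eps/2$. By \cref{register_model}, this $f_{\mathrm{sn}}$ has a register-model realization of width $n+m+1$ in which the map $\tilde\rr$ acts as $\rr$ on one coordinate and as $\id_\CC$ on each of the remaining $n+m$ register coordinates. I would then use \cref{prop: main}\ref{prop: main item 1} to replace every one of those $n+m$ identity slots by a width-$1$ block $\psi\circ\rr\circ\phi\approx\id_\CC$, absorbing the bracketing affine maps into the adjacent affine layers of the register model. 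The resulting network is an element of $\mathcal{NN}^\rr_{n,m,n+m+1}$. Part (iii) follows the same scheme; the only difference is that \cref{prop: main}\ref{prop: main item 3} is the available block, producing a width-$2$ approximation of $(z,\overline{z})$ whose first component approximates $\id_\CC$. Each of the $n+m$ identity slots then needs two neurons, so the width rises to $2(n+m)+1=2n+2m+1$.

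Part (ii) I would reduce to the already-proven part (i) by passing to $\overline{\rr}$. A short Wirtinger computation gives $\wirt\overline{\rr}(z_0)=\overline{\wirtq\rr(z_0)}\neq 0$ and $\wirtq\overline{\rr}(z_0)=\overline{\wirt\rr(z_0)}=0$, and since $\Delta\overline{\rr}=\overline{\Delta\rr}$, polyharmonicity is preserved under pointwise conjugation, so $\overline{\rr}$ is also not polyharmonic. Hence part (i) applies to $\overline{\rr}$ and produces an $\overline{\rr}$-network $f\in\mathcal{NN}^{\overline{\rr}}_{n,m,n+m+1}$ approximating $g$ on $K$ within $\eps/2$. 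To transfer universality back to $\rr$, I would rewrite each componentwise activation as $\overline{\rr}^{\times W}=C\circ\rr^{\times W}$, where $W=n+m+1$ and $C$ denotes coordinatewise complex conjugation on $\CC^W$, and approximate every occurrence of $C$ by a shallow $\rr$-network $\widetilde C:\CC^W\to\CC^W$ of width $W$ built coordinatewise from \cref{prop: main}\ref{prop: main item 2}. Inserting these approximations into $f$ and fusing consecutive affine maps turns $f$ into an element of $\mathcal{NN}^\rr_{n,m,n+m+1}$ (with roughly twice the depth).

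The main obstacle is not any single step in isolation but the uniform error propagation that ties the construction together: when the identity slots (respectively, the maps $C$ in part (ii)) are replaced by finite-accuracy $\rr$-approximations, those errors travel through every subsequent affine map and $\rr$-application. I would handle this by fixing in advance a nested family of compact sets containing all intermediate values produced by the ideal register model (or the ideal $\overline{\rr}$-network) as the input ranges over $K$, exploiting uniform continuity of $\rr$ and of the finitely many affine maps on those sets to obtain Lipschitz-type bounds layer by layer, and then tightening the per-block tolerance of \cref{prop: main} so that the accumulated deviation from the ideal output stays below $\eps/2$ on $K$. Combined with the initial $\eps/2$ margin between $f_{\mathrm{sn}}$ (or $f$) and $g$, this yields the desired $\eps$-approximation and completes all three cases.
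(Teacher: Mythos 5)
Your proposal is correct and follows essentially the same route as the paper: shallow universality (\cref{thm:felix}) plus the register model (\cref{register_model}), with the identity slots replaced by the width-$1$ or width-$2$ blocks of \cref{prop: main}, and case (ii) reduced to case (i) by writing $\overline{\rr}^{\times W}$ as conjugation after $\rr^{\times W}$ and approximating the conjugation coordinatewise. The only cosmetic difference is that your explicit layer-by-layer error bookkeeping is packaged in the paper as the continuity of composition under locally uniform convergence (\cref{prop:comp_loc_conv} together with \cref{prop:equivalence}).
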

\begin{proof}
	Note that 
	\begin{equation*}
	\mathcal{SN}^{\rr}_{n,m} \subseteq \mathcal{I}^{\rr}_{n,m,n+m+1}
	\end{equation*}
	according to the second part of \cref{register_model}.
	Since $\rr$ is non-polyharmonic, $\mathcal{SN}^{\rr}_{n,m}$ is universal according to \cref{thm:felix}
	and we hence have
	\begin{equation*}
	C(\CC^n; \CC^m) = \overline{\mathcal{SN}^\rr_{n,m}} \subseteq \overline{\mathcal{I}^\rr_{n,m,n+m+1}}.
	\end{equation*}
	The claim then follows from \cref{prop:register_to_full,prop:equi}.
\end{proof}

Next we provide two examples of activation functions which are used in practice and to which \cref{main_theorem_non_poly_classical_reg} applies.
\begin{Bsp}
The modReLU function has for example been proposed in \cite{arjovsky2016unitary} as a generalization of the classical ReLU to the complex plane.
For a parameter $b< 0$ it is defined as
\begin{equation*}
        \modrelu_b: \CC \to \CC, \quad \modrelu_b(z) \defeq \begin{cases}(\abs{z} + b)\frac{z}{\abs{z}}&\text{if }\abs{z} + b \geq 0, \\ 0& \text{otherwise.}\end{cases}
    \end{equation*}
An application of \cref{main_theorem_non_poly_classical_reg}\ref{main_theorem_non_poly_classical_reg_item2} shows that for $n,m \in \NN$, and $b<0$, the set $\mathcal{NN}^{\modrelu_b}_{n,m,2n+2m+1}$ is universal.

To this end, let us verify the assumptions of \cref{main_theorem_non_poly_classical_reg} in detail.
Since the continuity of $\modrelu_b$ is immediate for $z\in\CC$ with $\abs{z} \neq -b$, it remains to check the case $\abs{z} = -b$.
Take any sequence $(z_j)_{j\in \NN}$ with $z_j \to z$ as $j\to\infty$, where we assume without loss of generality $\abs{z_j} \geq -b$ for every $j \in \NN$.
Then $\abs{\modrelu_b(z_j)} = \abs{z_j} + b \to \abs{z} + b = 0$ as $j\to\infty$.
This shows the continuity of $\modrelu_b$.

In \cite[Corollary~5.4]{Geuchen2023} it is shown that for all $z \in \CC$ with $\abs{z} > -b$ and all $k, \ell \in \NN_0$ one has
\begin{equation*}
\wirt^k \wirtq^\ell \modrelu_b(z) \neq 0.
\end{equation*}
This implies that $\modrelu_b$ is non-polyharmonic and $\wirt \modrelu_b (z) \neq 0 \neq \wirtq \modrelu_b (z)$ for all $z \in \CC$ with $\abs{z} > -b$.

Further note that the result from \cref{main_theorem_non_poly_classical_reg} cannot be used to reduce the sufficient width to $n+m+1$ 
since it holds $\wirt \modrelu_b (z) \neq 0 \neq \wirtq \modrelu_b (z)$ for all $z \in \CC$ with $\abs{z} > -b$, 
$\wirt \modrelu_b(z) = \wirtq \modrelu_b(z) = 0$ for every $z \in \CC$ with $\abs{z} < -b$ and $\modrelu_b$ is not real differentiable at any $z \in \CC$ with $\abs{z} = -b$.
\end{Bsp}

\begin{Bsp}
The complex cardioid function has been used in \cite{virtue2017better} in the context of MRI fingerprinting, where complex-valued neural networks significantly outperformed their real-valued counterparts.
It is defined as
\begin{equation*}
        \card: \CC \to \CC, \quad \card(z) \defeq \begin{cases}\frac{1}{2}\lr{1+\frac{\RE(z)}{\abs{z}}}z&\text{if }z\in\CC \setminus \setn{0},\\0&\text{if }z=0.\end{cases}
    \end{equation*}
An application of \cref{main_theorem_non_poly_classical_reg}\ref{main_theorem_non_poly_classical_reg_item1} shows that for  $n,m \in \NN$ the set $\mathcal{NN}^{\card}_{n,m,n+m+1}$ is universal.

To this end, let us verify the assumptions of \cref{main_theorem_non_poly_classical_reg} in detail.
The continuity of $\card$ on $\CC \setminus \setn{0}$ is immediate.
Further note that
\begin{equation*}
\abs{\card(z)} = \abs{\frac{1}{2}\lr{1+\frac{\RE(z)}{\abs{z}}}z} \leq \abs{z} \to 0
\end{equation*}
as $z\to 0$, showing the continuity of $\card$ on the entire complex plane $\CC$.
Now \cite[Corollaries~5.6 and~5.7]{Geuchen2023} show that $\card$ is non-polyharmonic and
\begin{equation*}
\wirt \card (z) = \frac{1}{2} + \frac{1}{8} \cdot \frac{\overline{z}}{\abs{z}} + \frac{3}{8} \cdot \frac{z}{\abs{z}}, \quad \wirtq \card (z) = - \frac{1}{8} \cdot \frac{z^3}{\abs{z}^3} + \frac{1}{8} \cdot \frac{z}{\abs{z}}
\end{equation*}
for every $z \in\CC \setminus \setn{0}$.
Hence, we see $\wirt \card (1) = 1$ and $\wirtq \card (1) = 0$.
\end{Bsp}

\subsection{The polyharmonic case}\label{chap:polyharmonic}

In this section, we deal with activation functions $\rr: \CC \to \CC$ that are polyharmonic.
However, it turns out that this property can be relaxed to only requiring that $\rr \in C^2(\CC;\CC)$ in order for the proofs to work.
Note that we still assume that the activation function $\rr$ is neither holomorphic, nor antiholomorphic, nor $\RR$-affine.
That these assumptions cannot be neglected is shown in \cref{thm:holo_anti_aff}.
The main assumptions of this subsection can therefore be stated as follows.
\begin{Assum}
\label{assumptions}
Let $\rr: \CC \to \CC$ be a function satisfying the following conditions:
\begin{enumerate}[label={(\roman*)},leftmargin=*,align=left,noitemsep]
\item{$\rr\in C^2(\CC;\CC)$,}
\item{$\rr$ is not holomorphic,}
\item{$\rr$ is not antiholomorphic,}
\item{$\rr$ is not $\RR$-affine.}
\end{enumerate}
\end{Assum}
Using \cref{prop: main}, we derive the following \cref{prop: approx_identity_conj}.
It states that the function $z \mapsto (z, \overline{z})$ can be uniformly approximated on compact sets by a shallow network of width $2$.
\begin{Prop} \label{prop: approx_identity_conj}
Let $\rr$ satisfy \cref{assumptions}, let $K \subseteq \CC$ be compact, and $\eps > 0$.
Then there exist maps $\phi \in \Aff(\CC;\CC^2)$ and $\psi \in \Aff(\CC^2;\CC^2)$ such that
\begin{equation*}
\sup_{z\in K}\mnorm{(\psi \circ \rr^{\times 2} \circ \phi)(z) - (z, \overline{z})}_{\CC^2} < \eps.
\end{equation*}
\end{Prop}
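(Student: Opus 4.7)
The plan is to combine the three building blocks from \cref{prop: main} via an elementary case analysis. The key observation is that \cref{prop: main}\ref{prop: main item 3} directly gives the claim whenever one can find a single point $z_0\in\CC$ at which both Wirtinger derivatives are nonzero. If no such point exists, I will instead approximate $z$ and $\overline{z}$ separately at two different points using parts \ref{prop: main item 1} and \ref{prop: main item 2} of \cref{prop: main}, and then stack the two resulting shallow width-$1$ networks in parallel to obtain a single shallow width-$2$ network. Note that the hypothesis that $\rr$ is not $\RR$-affine is not needed for this proposition; only $C^2$-regularity together with non-holomorphy and non-antiholomorphy will enter.

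First, since $\rr \in C^2(\CC;\CC)$ is real differentiable everywhere and, by \cref{assumptions}, is neither holomorphic nor antiholomorphic, there exist points $z_1,z_2\in\CC$ with $\wirtq\rr(z_1)\neq 0$ and $\wirt\rr(z_2)\neq 0$. I now distinguish two cases.

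\textbf{Case A.} There exists $z_0\in\CC$ with $\wirt\rr(z_0)\neq 0 \neq \wirtq\rr(z_0)$. Then \cref{prop: main}\ref{prop: main item 3} applied to $z_0$, $K$, and $\eps$ directly produces the required $\phi\in\Aff(\CC;\CC^2)$ and $\psi\in\Aff(\CC^2;\CC^2)$.

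\textbf{Case B.} No such $z_0$ exists. Then the point $z_1$ above must satisfy $\wirt\rr(z_1)=0$ (otherwise $z_1$ would serve as the $z_0$ of Case~A), and analogously $\wirtq\rr(z_2)=0$. Applying \cref{prop: main}\ref{prop: main item 1} at $z_2$ yields $\phi_1,\psi_1\in\Aff(\CC;\CC)$ with $\sup_{z\in K}\abs{(\psi_1\circ\rr\circ\phi_1)(z)-z} < \eps/\sqrt{2}$, and applying \cref{prop: main}\ref{prop: main item 2} at $z_1$ yields $\phi_2,\psi_2\in\Aff(\CC;\CC)$ with $\sup_{z\in K}\abs{(\psi_2\circ\rr\circ\phi_2)(z)-\overline{z}} < \eps/\sqrt{2}$. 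Stacking in parallel via $\phi(z)\defeq(\phi_1(z),\phi_2(z))$ and $\psi(u,v)\defeq(\psi_1(u),\psi_2(v))$ gives $(\psi\circ\rr^{\times 2}\circ\phi)(z) = ((\psi_1\circ\rr\circ\phi_1)(z),(\psi_2\circ\rr\circ\phi_2)(z))$, and the choice of tolerances together with the definition of the Euclidean norm on $\CC^2$ ensures that this approximates $(z,\overline{z})$ uniformly on $K$ within $\eps$.

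I do not anticipate any real obstacle here: all of the analytic work is already packaged into \cref{prop: main}, and the proposition is essentially a bookkeeping argument that combines its three parts. The only mildly subtle step is the elementary verification that in Case B the two parallel width-$1$ blocks indeed assemble into a genuine shallow width-$2$ network of the form $\psi\circ\rr^{\times 2}\circ\phi$, which is immediate from the componentwise action of $\rr^{\times 2}$ and the fact that block-diagonal concatenations of affine maps are affine.
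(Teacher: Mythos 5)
Your proposal is correct and follows essentially the same route as the paper: a case distinction on whether some point has both Wirtinger derivatives nonzero, invoking \cref{prop: main}\ref{prop: main item 3} in the first case and parts \ref{prop: main item 1} and \ref{prop: main item 2} at two separate points (stacked in parallel into a width-$2$ shallow network) in the second. Your explicit bookkeeping of the parallel stacking and the observation that the non-$\RR$-affine hypothesis is not used are both consistent with the paper's (more terse) argument.
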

\begin{proof}
If there exists a point $z_0 \in \CC$ with $\wirt \rr (z_0) \neq 0 \neq \wirtq \rr(z_0)$, we can directly apply \cref{prop: main}\ref{prop: main item 3}.
If there does not exist such a point $z_0$, we can still find $z_1 \in \CC$ with $\wirt \rr (z_1) \neq 0$ and $z_2 \in \CC$ with $\wirtq \rr (z_2) \neq 0$, since $\rr$ is neither holomorphic nor antiholomorphic.
By assumption of the nonexistence of $z_0 \in \CC$ with $\wirt \rr (z_0) \neq 0 \neq \wirtq \rr(z_0)$, it follows that $\wirtq \rr (z_1) = 0 $ and $\wirt \rr (z_2)  = 0$, and we can thus apply \cref{prop: main}\ref{prop: main item 1} and \ref{prop: main item 2}.
\end{proof}
\cref{prop: approx_identity_conj} is a central finding, since it implies that we can build from the activation function $\rr$ constructions
 that are similar to the register model construction from \cref{register_model}.

The Stone--Weierstrass theorem states that any continuous function can be arbitrarily well approximated by complex 
polynomials in $z_1,\ldots,z_n$ and $\overline{z_1},\ldots,\overline{z_n}$ in the uniform norm on compact subsets of $\CC^n$.
In order to show universality of CVNNs, it therefore suffices to show that such polynomials can be approximated by deep narrow 
CVNNs to arbitrary precision in the uniform norm on compact sets.
Motivated by this observation, for $N,n \in \NN$, we define 
\begin{equation}\label{eq:poly}
\mathcal{P}_N^n \defeq \setcond{p:\CC^n \to \CC, \ p(z)= \underset{\m \leq N}{\sum_{\m \in \NN_0^n}} \underset{\elll \leq N}{\sum_{\elll \in \NN_0^n}}  
a_{\m, \elll} z ^{\m} \overline{z}^{\elll}}{a_{\m, \elll}\in\CC\fall \m,\elll}
\end{equation}
as the set of complex polynomials on $\CC^n$ of degree less than $N$ in each variable.
Here, the notation $\m \leq N$ means $\m_j \leq N$ for every $j \in \setn{1,\ldots,n}$ and 
\begin{equation*}
z^{\m}z^{\elll} \defeq \prod_{j=1}^n z_j^{\m_j} \overline{z_j}^{\elll_j}
\end{equation*}
for $\m=(\m_1,\ldots,\m_n), \elll=(\elll_1,\ldots,\elll_n)\in\NN_0^n$.
We follow an approach similar to that of the register model by preserving the inputs and outputs from layer to layer while 
gradually performing multiplications to approximate the individual monomials.
This is described in the following lemma.
In its statement and proof, we use $\prod_{k=1}^n f_k^{\beta_k}$ as an abbreviation for the composition
\begin{equation*}
\underbrace{(f_n\circ\ldots\circ f_n)}_{\beta_n\text{ many}}\circ\ldots\circ \underbrace{(f_2\circ\ldots\circ f_2)}_{\beta_2\text{ many}}\circ \underbrace{(f_1\circ\ldots\circ f_1)}_{\beta_1\text{ many}}.
\end{equation*}
\begin{Lem}\label{lem:thomas_model}
Let $p:\CC^n\to\CC^m$, $p(z)=(p_1(z),\ldots,p_m(z))$ such that $p_j \in \mathcal{P}^n_N$ for every $j \in \setn{1,\ldots,n}$
for a suitable choice of $N \in \NN$.
Let $\mul$ be one of the three maps
\begin{align*}
\mul_1:\CC\times \CC\to\CC,\qquad &\mul_1(z,w)=zw,\\
\mul_2:\CC\times \CC\to\CC,\qquad &\mul_2(z,w)=z\overline{w},\\
\mul_3:\CC\times \CC\to\CC,\qquad &\mul_3(z,w)=\overline{z}\overline{w},
\end{align*}
as in \cref{prop: mul_approx}.
Further, let 
\begin{equation*}
f_k^{\mul} = f_k:\CC^n\times\CC\times\CC^m\to\CC^n\times\CC\times\CC^m,\qquad f_k(z,w,u)=(z,\mul(w,z_k),u) 
\end{equation*}
for $k\in\setn{1,\ldots,n}$, and 
\begin{equation*}
h_k:\CC^{n+1+m}\to\CC^{n+1+m},\qquad h_k=\id_{\CC^{k-1}}\times\overline{\id_\CC}\times\id_{\CC^{n+m-k}},
\end{equation*}
for $k\in\setn{1,\ldots,n+1}$.
Then there exist $L \in \NN$, maps $g_j:\CC^{n+m+1}\to\CC^{n+m+1}$ for $j\in\setn{1,\ldots,L}$
each of which is a finite composition of the maps $f_1,\ldots,f_n,h_1,\ldots,h_{n+1}$ defined above, and affine mappings $T_0 \in \Aff(\CC^n; \CC^{n+m+1})$, 
$T_\ell \in \Aff(\CC^{n+m+1}; \CC^{n+m+1})$ for $\ell \in \setn{1,\ldots,L-1}$ and further 
$T_L \in \Aff(\CC^{n+m+1}; \CC^m)$ such that
\begin{equation*}
p=T_L\circ g_L\circ \ldots T_1\circ g_1\circ T_0.
\end{equation*}
\end{Lem}
\begin{proof}
By definition of $\mathcal{P}_N^n$, there exist $a_{j,\m, \elll} \in \CC$ for $\m,\elll \in \NN_0^n$ with $\m, \elll \leq N$ and $j \in \setn{1,\ldots,m}$ such that 
\begin{equation*}
p_j(z) =\underset{\m, \elll \leq N}{\sum_{\m,\elll \in \NN_0^n}}a_{j,\m, \elll}z^{\m}\overline{z}^{\elll} \quad \text{for every } j \in \setn{1,\ldots,m}.
\end{equation*}
By enumerating all the occurring monomials from 1 to $L$, we can rewrite 
\begin{equation*}
p_j(z) = \sum_{\ell = 1}^L c_{j,\ell} \prod_{k=1}^n z_k^{\alpha_{\ell,k}} \overline{z_k}^{\beta_{\ell,k}} 
\end{equation*}
with $\alpha_{\ell,k}, \beta_{\ell,k} \in \setn{0,\ldots,N}$ and $c_{j,\ell} \in \CC$ for every $\ell \in \setn{1,\ldots,L}$, $k \in \setn{1,\ldots,n}$, and $j \in \setn{1,\ldots,m}$.

We then set 
\begin{align*}
&T_0\colon\CC^n\to \CC^n\times \CC\times\CC^m,\quad &T_0(z)&=(z,1,0),\\
&T_\ell\colon\CC^n\times\CC\times\CC^m\to\CC^n\times\CC\times \CC^m,\quad &T_\ell(z,w,u)&=\lr{z,1,w \sum_{j=1}^mc_{j,\ell}e_j+u}, \\ \intertext{and}
&T_{L}\colon\CC^n\times\CC\times\CC^m \to  \CC^m,\quad &T_{L}(z,w,u)&=w \sum_{j=1}^mc_{j,L}e_j+u,
\end{align*}
where $\ell \in \setn{1,\ldots,L-1}$.
Here, $e_j$ denotes the $j$th standard basis vector in $\CC^m$.
Clearly, the maps $T_0,\ldots,T_L$ are $\CC$-affine. 
Moreover, for $\ell\in\setn{1,\ldots,L}$, let
\begin{equation*}
g_\ell=\begin{cases}\prod_{k=1}^n f_k^{\alpha_{\ell,k}}\circ h_{n+1} \circ \prod_{k=1}^n f_k^{\beta_{\ell,k}}&\text{if }\mul=\mul_1,\\
\prod_{k=1}^n f_k^{\beta_{\ell,k}}\circ h_{n+1}\circ \prod_{k=1}^n f_k^{\alpha_{\ell,k}}&\text{if }\mul=\mul_2,\\
\prod_{k=1}^n(h_{n+1}\circ f_k)^{\alpha_{\ell,k}}\circ \prod_{k=1}^n (h_{n+1}\circ h_k\circ f_k\circ h_k)^{\beta_{\ell,k}}&\text{if }\mul=\mul_3.
\end{cases}
\end{equation*}
Then
\begin{equation}\label{eq:monomial-assembly}
g_\ell(z,1,u)=\lr{z,\prod_{k=1}^n z_k^{\alpha_{\ell,k}}\overline{z_k}^{\beta_{\ell,k}},u}
\end{equation}
for all $\ell\in\setn{1,\ldots,L}$, $z\in\CC^n$, and $u\in\CC$, i.e., the $(n+1)$-entry of $g_\ell(z,1,u)$ is the $\ell$th monomial.
The application of $T_\ell$ adds this monomial with the correct coefficients to the cumulative sums 
in the entries indexed $n+2,\ldots,n+1+m$, and resets the $(n+1)$-entry to $1$ for the assembly of the next monomial. 
This proves 
\begin{equation*}
p = T_L \circ g_L \circ \ldots \circ T_1 \circ g_1 \circ T_0.
\end{equation*}

To see \eqref{eq:monomial-assembly}, note that for example 
\begin{align*}
&(h_{n+1}\circ h_k\circ f_k\circ h_k)(z_1,\ldots,z_k,\ldots,z_n,w,u)\\
&=(h_{n+1}\circ h_k\circ f_k)(z_1,\ldots,\overline{z_k},\ldots,z_n,w,u)\\
&=(h_{n+1}\circ h_k)(z_1,\ldots,\overline{z_k},\ldots,z_n,z_k\overline{w},u)\\
&=h_{n+1}(z_1,\ldots,z_k,\ldots,z_n,z_k\overline{w},u)\\
&=(z_1,\ldots,z_k,\ldots,z_n,\overline{z_k}w,u)
\end{align*}
when $\mul=\mul_3$.
\end{proof}
\cref{lem:thomas_model} states that every function from $\CC^n$ to $\CC^m$ whose components are polynomials can be written as the composition of affine maps 
and the maps $f_k$ and $h_k$ that are defined in \cref{lem:thomas_model}.
We thus aim to show that each of the $f_k$ and $h_k$ can be locally uniformly approximated by narrow CVNNs. 
For $h_k$, this is a direct consequence of \cref{prop: approx_identity_conj}.
For $f_k$, since each function $\mul$ can be approximated by a shallow CVNN of width 12 (see \cref{prop: mul_approx}), together with \cref{prop: approx_identity_conj} 
it is easy to see that $f_k \in \overline{\mathcal{SN}^\rr_{n+m+1,n+m+1, 2n+2m +12}}$. 
However, a careful analysis shows that $f_k \in \overline{\mathcal{NN}^\rr_{n+m+1,n+m+1, 2n+2m +5}}$; see the proof of \cref{thm:main-polyharmonic}.
The next proposition is a crucial ingredient for this and shows that each $f_k$ can be approximated by register models with a width of $n+m+3$.
\begin{Prop}\label{prop:fk}
Let $\varrho \in C^2(\CC; \CC)$ be not $\RR$-affine, $n,m \in \NN$, and $k \in \setn{1,\ldots,n}$.
Let further $\mul $ be chosen according to \cref{prop: multiplication_approx} and $f_k^{\mul} = f_k : \CC^{n+m+1} \to \CC^{n+m+1}$ be defined as in \Cref{lem:thomas_model}.
Then we have 
\begin{equation*}
f_k \in \overline{\mathcal{I}^{\varrho}_{n+m+1,n+m+1,n+m+3}},
\end{equation*}
where the closure is taken with respect to the compact-open topology. 
\end{Prop}
\begin{proof}
According to \Cref{prop: mul_approx} and \Cref{prop:equivalence}, we may pick a sequence $(\varphi_j)_{j \in \NN}$ with $\varphi_j \in \mathcal{SN}^\varrho_{2,1}$ for $j \in \NN$ that satisfies 
\begin{equation*}
\varphi_j \to  \mul \quad \text{locally uniformly.}
\end{equation*}
We define 
\begin{equation*}
\eta_j: \quad \CC^n \times \CC \times \CC^m \to \CC^n \times \CC \times \CC^m, \quad \eta_j(z,w,u) \defeq (z, \varphi_j(w,z_k),u)
\end{equation*}
and note that 
\begin{equation*}
\eta_j \to f_k \quad \text{locally uniformly.}
\end{equation*}
Hence, if we can show that $\eta_j \in \mathcal{I}^{\varrho}_{n+m+1,n+m+1,n+m+3}$ for every $j \in \NN$, the proof is completed according to \cref{prop:comp_open_loc_unif_conv}. 

To this end, we take $j \in \setn{1,\ldots,n}$ and define 
\begin{align*}
\gamma_j &\colon \CC^2 \to \CC^3, \quad \gamma_j(w,z) = (w,z, \varphi_j(w,z)) \quad \text{and} \\
\theta_j &\colon \CC^2 \to \CC^2, \quad \theta_j(w,z) = (z, \varphi_j(w,z)).
\end{align*}
The first part of \cref{register_model} then shows $\gamma_j \in \mathcal{I}^{\varrho}_{2,3,4}$, and an application of a projection onto the last two coordinates after the last layer
shows $\theta_j \in \mathcal{I}^{\varrho}_{2,2,4}$.
Let $\widehat{z_k} \in \CC^{n-1}$ denote the vector that arises from $z \in \CC^n$ by omitting the $k$th entry. 
Then, we can write 
\begin{equation*}
\eta_j(z,w,u) = \pi_k(\widehat{z_k}, \theta_j(w,z_k), u),
\end{equation*}
where $\pi_k: \CC^{n+m+1} \to \CC^{n+m+1}$ is the permutation of the entries of a vector that satisfies  
\begin{equation*}
\pi_k(\widehat{z_k}, z_k, w, u) = (z,w,u)
\end{equation*}
for every $z \in \CC^n$, $w \in \CC$ and $u \in \CC^m$.
From this representation, we clearly see 
\begin{equation*}
\eta_j \in \mathcal{I}^{\varrho}_{n+m+1,n+m+1,n+m+3},
\end{equation*}
as desired.
\end{proof}
We can now prove the final bound. 
\begin{Satz}\label{thm:main-polyharmonic}
Let $n,m \in \NN$.
	Assume that $\rr \in C(\CC;\CC)$ satisfies \cref{assumptions}.
	Then the set $\mathcal{NN}^\rr_{n,m,2n+2m+5}$ is universal.
	Moreover, if there exists a point $z_0 \in \CC$ with either
	\begin{equation}\label{eq:wirtconddd}
	\wirt \rr(z_0) \neq 0 = \wirtq \rr(z_0) \quad \text{or}\quad \wirt \rr(z_0) = 0 \neq \wirtq \rr(z_0),
	\end{equation}
	then $\mathcal{NN}^\rr_{n,m,n+m+3}$ is universal.
\end{Satz}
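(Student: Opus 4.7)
The plan is to reduce the problem to approximating polynomials in $z_j, \overline{z_j}$ via Stone--Weierstrass, and then to realize such polynomials by bounded-width CVNNs by carefully combining the building blocks from \cref{sec: building_blocks} in a register-style architecture analogous to \cref{register_model}. Since $\rr$ is continuous, the Stone--Weierstrass theorem tells us that the $\CC$-algebra generated by $z_1,\ldots,z_n,\overline{z_1},\ldots,\overline{z_n}$ is uniformly dense in $C(K;\CC)$ for every compact $K\subseteq\CC^n$; applied componentwise, polynomial maps in $z_j,\overline{z_j}$ are thus dense in $C(K;\CC^m)$. Combined with \cref{prop:equivalence}, it will therefore suffice to approximate any such polynomial locally uniformly by elements of $\mathcal{NN}^\rr_{n,m,W}$.

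Next, I would set up a complex register model with $n$ paired slots storing $(z_j,\overline{z_j})$, $m$ paired slots $(y_k,\overline{y_k})$ for output accumulators, and a small work area for forming monomials. \cref{prop: approx_identity_conj} allows a pair to be propagated through one hidden layer at the cost of $2$ neurons---applying the block to the $z$-coordinate and reading off both $z$ and $\overline{z}$---so the persistent registers consume $2n+2m$ neurons per layer. Under the additional hypothesis \eqref{eq:wirtconddd}, \cref{prop: main}(i) or (ii) provides a one-neuron identity (respectively one-neuron conjugation); combined with the $\overline{\rr}$-symmetry trick used in the proof of \cref{main_theorem_non_poly_classical_reg}\ref{main_theorem_non_poly_classical_reg_item11}, both $\id_\CC$ and $\overline{\id_\CC}$ become approximable by a single neuron each, cutting propagation down to $n+m$ per layer.

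The third step is the actual computation. I would decompose the polynomial into monomials and process them one by one, each produced by iterated binary multiplications of register entries, with the running product living in a single work register. Once a monomial is complete it is added with its coefficient into the appropriate output accumulator via the affine map between hidden layers, so accumulation itself consumes no activations. Each binary multiplication is realized through the polarization identity of \cref{prop: multiplication_approx}, and each squaring step costs $4$ neurons via \cref{prop: approx}. To keep within the budget of $5$ (respectively $4$) extra neurons, the multiplication is spread across several layers: at most one squaring per layer, with at most one previously computed square carried forward; the ``free'' conjugate coordinate produced by the two-neuron identity block absorbs the carry at no extra cost. Composing all approximate building blocks and invoking \cref{prop:comp_loc_conv} produces a sequence in $\mathcal{NN}^\rr_{n,m,W}$ converging locally uniformly to the target polynomial, and \cref{prop:equivalence} closes the argument.

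The main obstacle, as already hinted, lies in this width accounting. A direct application of \cref{prop: multiplication_approx} spends $12$ (or $8$) neurons on one multiplication---far too many---so saving down to $5$ (respectively $4$) requires a careful temporal scheduling of squarings and carries, together with a case analysis on which of $\wirt^2\rr$, $\wirt\wirtq\rr$, $\wirtq^2\rr$ is non-vanishing. The latter determines which of $z^2$, $z\overline{z}$, $\overline{z}^2$ is the elementary square the network can produce, and hence which polarization identity must be used to generate every desired monomial in $z$ and $\overline{z}$.
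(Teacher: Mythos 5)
Your overall strategy (Stone--Weierstrass reduction to polynomials in $z_j,\overline{z_j}$, a register-style architecture built from the blocks of \cref{sec: building_blocks}, and \cref{prop:comp_loc_conv,prop:equivalence} to pass to the limit) is the same as the paper's, but the width accounting --- which is exactly what the bounds $2n+2m+5$ and $n+m+4$ are about --- does not go through as you describe. Your schedule spends $4$ neurons per layer on one squaring block from \cref{prop: approx} and claims the carries are absorbed ``at no extra cost'' by the free conjugate coordinate of a two-neuron identity block. That block approximates $z\mapsto(z,\overline z)$, so its second output is determined by its own input; it cannot simultaneously hold an unrelated carried quantity. During the polarization you must carry the running product $w$ (needed to form the later linear combinations $z\pm w$, $z-\ii w$) and the partial sum of the squares; in the general case each such carry costs a $2$-neuron block, so your per-layer overhead is at least $4+2+2=8>5$, and in the improved case the budget of $4$ is already exhausted by the squaring block alone. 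A second problem: under \eqref{eq:wirtconddd} with $\wirt\rr(z_0)\neq 0=\wirtq\rr(z_0)$, \cref{prop: main} gives a one-neuron approximation of $\id_\CC$ only; the $\overline{\rr}$-trick from \cref{main_theorem_non_poly_classical_reg}\ref{main_theorem_non_poly_classical_reg_item11} converts $\overline{\rr}$-networks into $\rr$-networks, but it does \emph{not} give a one-neuron approximation of $\overline{\id_\CC}$ for $\rr$. So your claim that the persistent registers shrink to $n+m$ per layer while conjugated variables remain available is unjustified; some mechanism for supplying conjugates is still needed.

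The missing idea is the paper's Step $2'$: do not decompose the multiplication into squaring blocks scheduled across layers. Instead, take the whole shallow width-$12$ multiplication network from \cref{prop: multiplication_approx} and convert it, via \cref{register_model} ($\mathcal{SN}^\rr_{2,1}\subseteq\mathcal{I}^\rr_{2,1,4}$), into a deep sub-network of width $4$ in which each layer applies $\rr$ to exactly \emph{one} coordinate and the identity to the others (two in-registers, one accumulator). One in-register duplicates a value already stored in the big registers and is dropped, so the per-layer overhead is one raw $\rr$-neuron plus two identity slots (the running product and the sub-network's accumulator); replacing identities by the two-neuron block of \cref{prop: approx_identity_conj} gives $2+2+1=5$ extra neurons, hence width $2n+2m+5$. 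In the improved case one keeps only a \emph{single} on-demand conjugation slot per layer (paired with one identity inside a two-neuron block), approximates the remaining $n+m+1$ identities by one neuron each, and handles $\wirt\rr(z_0)=0\neq\wirtq\rr(z_0)$ by running the argument for $\overline{\rr}$ and converting with one-neuron conjugation blocks; this yields $n+m+4$. Without this ``register-model inside the register model'' step, your construction does not meet the stated widths.
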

\begin{proof}
From the Stone--Weierstrass theorem \cite[Theorem~4.51]{folland_real_1999}, 
we know that the set of complex polynomials in $z_1,\ldots,z_n$ and $\overline{z_1},\ldots,\overline{z_n}$ is dense in $C(K;\CC)$ 
with respect to the uniform norm on any compact set $K\subset\CC^n$.
Hence, it suffices to show that each function $p:\CC^n \to \CC^m$, whose components are polynomials in 
$z_1,\ldots,z_n$ and $\overline{z_1},\ldots,\overline{z_n}$ can be uniformly approximated on $K$ by CVNNs of appropriate width.
Equivalently, it suffices to show that $p \in \overline{\mathcal{NN}^{\rr}_{n,m,W}}$, where $W$ is the desired width and the closure 
is taken with respect to the compact-open topology (see \cref{prop:equivalence,prop:comp_open_loc_unif_conv,prop:sequential}). 

Let $\mul : \CC^2 \to \CC$ be chosen according to \cref{prop: multiplication_approx} (depending on $\rr$).
From \cref{lem:thomas_model}, we infer that there exists a natural number $L \in \NN$ such that
\begin{equation*}
p = T_L \circ g_L \circ \ldots \circ T_1 \circ g_1 \circ T_0,
\end{equation*}
where $T_0 \in \Aff(\CC^n; \CC^{n+m+1})$, $T_L \in \Aff(\CC^{n+m+1}; \CC^m)$ and $T_\ell \in \Aff(\CC^{n+m+1}; \CC^{n+m+1})$ for 
every $\ell \in \setn{1,\ldots,L-1}$.
Moreover, each function $g_\ell : \CC^{n+m+1} \to \CC^{n+m+1}$ is a finite composition of the $f_k = f_k^{\mul}$ and $h_k$ as defined in \cref{lem:thomas_model}.
In view of \cref{prop:comp_loc_conv}, it hence suffices to show that for every $k \in \setn{1,\ldots,n}$, we have 
\begin{equation*}
f_k, h_k \in \overline{\mathcal{NN}^{\rr}_{n+m+1, n+m+1, 2n+2m+5}},
\end{equation*}
where the closure is taken with respect to the compact-open topology.

To this end, let $k \in \setn{1,\ldots,n}$.
Since $\rr \in C^2(\CC;\CC)$ and $\rr$ is by assumption clearly not constant, there exists a point $z_1 \in \CC$ at which $\rr$
is real differentiable with non-zero derivative.
\cref{prop:register_to_full,prop:fk} then show
\begin{equation*}
f_k \in \overline{\mathcal{I}^{\rr}_{n+m+1,n+m+1,n+m+3}} \subseteq \overline{\mathcal{NN}^{\rr}_{n+m+1, n+m+1, 2n+2m+5}}.
\end{equation*}
Moreover, since both the identity and the conjugation can be locally uniformly approximated on $\CC$ 
by shallow networks of width 2 according to \cref{prop: approx_identity_conj}, we get 
\begin{equation*}
h_k \in \overline{\mathcal{SN}^{\rr}_{n+m+1, n+m+1, 2n+2m + 2}}\subseteq \overline{\mathcal{NN}^{\rr}_{n+m+1, n+m+1, 2n+2m+5}}.
\end{equation*}
This proves that $\mathcal{NN}^{\rr}_{n+m+1, n+m+1, 2n+2m+5}$ is universal (see \cref{prop:equi}). 

Let us now assume that there exists a point $z_0 \in \CC$ with 
\begin{equation*}
\wirt \rr (z_0) \neq 0 = \wirtq \rr(z_0).
\end{equation*}
In that case, \cref{prop:register_to_full,prop:fk} yield 
\begin{equation*}
f_k \in \overline{\mathcal{I}^{\rr}_{n+m+1,n+m+1,n+m+3}} \subseteq \overline{\mathcal{NN}^{\rr}_{n+m+1, n+m+1, n+m+3}}.
\end{equation*}
Moreover, since identities can be locally uniformly approximated by shallow CVNNs of width 1 according to \cref{prop: main}, we get 
\begin{equation*}
h_k \in \overline{\mathcal{SN}^{\rr}_{n+m+1, n+m+1, n+m+2}}\subseteq \overline{\mathcal{NN}^{\rr}_{n+m+1, n+m+1, n+m+3}},
\end{equation*}
where we again used \cref{prop: approx_identity_conj} to approximate the conjugation (which might possibly require a width of 2).

It remains to deal with the case 
\begin{equation*}
\wirt \rr (z_0) = 0 \neq \wirtq \rr(z_0).
\end{equation*}
Note that $\overline{\rr}$ satisfies \cref{assumptions} and by the fundamental properties of Wirtinger derivatives (see for instance \cite[E.~1a]{kaup_holomorphic_1983}) we compute 
	\begin{equation*}
	\wirt \overline{\rr}(z_0) = \overline{\wirtq \rr (z_0)} \neq 0 \quad\text{and}\quad \wirtq \overline{\rr}(z_0) = \overline{\wirt \rr (z_0)} = 0.
	\end{equation*}
Hence, by what we have previously shown, we infer that $\mathcal{NN}^{\overline{\rr}}_{n,m,n+m+3}$ is universal. 
\cref{prop:conj_universal} then yields that $\mathcal{NN}^{\rr}_{n,m,n+m+3}$ is universal.
\end{proof}
\section{A quantitative bound in terms of the depth}\label{sec:quant}
In this section, we provide a quantitative approximation bound in terms of the depth of the considered networks.
For $n \in \NN$, we let 
\begin{equation*}
\Omega_n \defeq [-1,1]^n + \ii \cdot [-1,1]^n
\end{equation*}
 denote the $2n$-dimensional unit cube embedded into $\CC^n$. 
Let $f \in C(\Omega_n; \CC^m)$ be a given continuous function defined on that cube and let $\eps > 0$ be a prescribed approximation accuracy. 
According to the results established in \cref{chap:polyharmonic,chap:non-polyharmonic}, we can approximate the function $f$ up to arbitrary precision on $\Omega_n$ 
with deep narrow networks.
However, these statements are of a qualitative nature and do not address the question of how deep the networks have to be in order to guarantee an approximation accuracy less then $\eps$.
In this section, we prove such a quantitative statement for the case of activation functions that are smooth and non-polyharmonic on some non-empty open subset of $\CC$.
Our bound heavily relies on the \emph{modulus of continuity} of the given function $f$:
For a compact set $K \subseteq \CC^n$, a function $f \in C(K; \CC^m)$ and $h>0$, we let
\begin{equation*}
\omega(f,h) \defeq \sup \setcond{\mnorm{f(z_1) - f(z_2)}_{\CC^m}}{z_1,z_2\in K,\mnorm{z_1 - z_2}_{\CC^n} < h}.
\end{equation*}
We get the following result, which is a generalization of \cite[Proposition~48]{kratsios2022universal} to the complex-valued setting. 
\begin{Prop}\label{thm:geuchen-lemma}
Let $f \in C(\Omega_n ; \CC)$. 
Then for every $k \in \NN$ there exists $p \in \mathcal{P}_{2k}^n$ with 
\begin{equation*}
\mnorm{f - p}_{C(\Omega_n; \CC)} \leq \lr{\sqrt{2} + \frac{n}{\sqrt{2}}} \omega\lr{f, \frac{2}{\sqrt{k}}}.
\end{equation*}
Here, $\mathcal{P}_{2k}^n$ is as defined in \eqref{eq:poly}.
\end{Prop}
\begin{proof}
We define 
\begin{equation*}
\widetilde{f}: [0,1]^n + \ii \cdot [0,1]^n \to \CC, \quad \widetilde{f}(z) = f(2z - \mathds{1}),
\end{equation*}
where $\mathds{1}\in\CC^n$ is the vector with every entry equal to $1+i$.
According to \cite[Proposition~48]{kratsios2022universal}, we then observe the existence of a function $\widetilde{p_1}: \CC^n \to \RR$ with 
\begin{equation*}
\mnorm{\RE(\widetilde{f})- \widetilde{p_1}}_{C([0,1]^n + \ii \cdot [0,1]^n; \RR)} \leq \lr{1 + \frac{2n}{4}} \omega\lr{\RE(\widetilde{f}), \frac{1}{\sqrt{k}}} = 
 \lr{1 + \frac{n}{2}} \omega\lr{\RE(\widetilde{f}), \frac{1}{\sqrt{k}}}
\end{equation*}
and $\widetilde{p_1}$ is of the form 
\begin{equation*}
\widetilde{p_1}(z) = \underset{\kk \leq k}{\sum_{\kk \in \NN_0^{2n}}} a_{\kk} \prod_{j=1}^n \RE(z_j)^{\kk_j} \IM(z_j)^{\kk_{n+j}},
\end{equation*}
with $a_\kk \in \RR$ for every $\kk \in \NN_0^{2n}$ with $\kk \leq k$.
Similarly, we obtain a function $\widetilde{p_2}: \CC^n  \to \RR$ of the form 
\begin{equation*}
\widetilde{p_2}(z) = \underset{\kk \leq k}{\sum_{\kk \in \NN_0^{2n}}} b_{\kk} \prod_{j=1}^n \RE(z_j)^{\kk_j} \IM(z_j)^{\kk_{n+j}}
\end{equation*}
with $b_{\kk} \in \RR$ satisfying 
\begin{equation*}
\mnorm{\IM(\widetilde{f}) - \widetilde{p_2}}_{C([0,1]^n + \ii \cdot [0,1]^n; \RR)} \leq
 \lr{1 + \frac{n}{2}} \omega\lr{\IM(\widetilde{f}), \frac{1}{\sqrt{k}}}.
\end{equation*}
We set $\widetilde{p} \defeq \widetilde{p_1} + \ii \cdot \widetilde{p_2}$ and get 
\begin{align*}
&\mnorm{\widetilde{f} - \widetilde{p}}_{C([0,1]^n + \ii \cdot [0,1]^n ; \CC)}\\
&=\sup_{z\in [0,1]^n+\ii\cdot [0,1]^n}\sqrt{(\RE(\tilde{f}(z))-\RE(\tilde{p}(z)))^2+(\IM(\tilde{f}(z))-\IM(\tilde{p}(z)))^2}\\
&\leq \sqrt{\mnorm{\RE(\widetilde{f})- \widetilde{p_1}}_{C([0,1]^n + \ii \cdot [0,1]^n; \RR)}^2+\mnorm{\IM(\widetilde{f}) - \widetilde{p_2}}_{C([0,1]^n + \ii \cdot [0,1]^n; \RR)}^2}\\
& \leq \lr{\sqrt{2} + \frac{n}{\sqrt{2}}} \omega\lr{\widetilde{f}, \frac{1}{\sqrt{k}}}.
\end{align*}
Using the substitutions $\RE(z) = \frac{1}{2}(z + \overline{z})$ and $\IM(z) = \frac{1}{2 \ii}(z - \overline{z})$, we obtain $\widetilde{p} \in \mathcal{P}_{2k}^n$.
In the end, we set $p(z) \defeq \widetilde{p}(\frac{1}{2}z + \frac{1}{2}\mathds{1})$ and note $p \in \mathcal{P}_{2k}^n$. 
Moreover, we have 
\begin{equation*}
\mnorm{f - p}_{C(\Omega_n ; \CC)} \leq \lr{\sqrt{2} + \frac{n}{\sqrt{2}}} \omega\lr{\widetilde{f}, \frac{1}{\sqrt{k}}}.
\end{equation*}
It remains to show that $ \omega (\widetilde{f}, \frac{1}{\sqrt{k}})=\omega(f, \frac{2}{\sqrt{k}})$. 
To this end, we observe 
\begin{align*}
\omega \lr{\widetilde{f}, \frac{1}{\sqrt{k}}} &= 
\sup \setcond{\abs{\widetilde{f}(\widetilde{z_1}) - \widetilde{f}(\widetilde{z_2})} }{ \widetilde{z_1}, \widetilde{z_2} \in [0,1]^n + \ii \cdot [0,1]^n, \ \mnorm{\widetilde{z_1} - \widetilde{z_2}}_{\CC^n} \leq \frac{1}{\sqrt{k}}} \\
&= \sup \setcond{\abs{f(2\widetilde{z_1}-1) - f(2\widetilde{z_2}-1)} }{ \widetilde{z_1}, \widetilde{z_2} \in [0,1]^n + \ii \cdot [0,1]^n, \ \mnorm{\widetilde{z_1} - \widetilde{z_2}}_{\CC^n} \leq \frac{1}{\sqrt{k}}} \\
&= \sup \setcond{\abs{f(z_1) - f(z_1)} }{ z_1, z_2 \in \Omega_n, \ \mnorm{\frac{1}{2}z_1 + \frac{1}{2} - \lr{\frac{1}{2} z_2 + \frac{1}{2}}}_{\CC^n} \leq \frac{1}{\sqrt{k}}} \\
&= \omega\lr{f, \frac{2}{\sqrt{k}}}.\qedhere
\end{align*}
\end{proof}
The following result shows how polynomials can be approximated by shallow CVNNs. 
Remarkably, the size of the networks needed does only depend on the degree of the polynomial and not on the approximation accuracy. 
Moreover, if one aims to approximate polynomials from a bounded subset of $\mathcal{P}_k^n$ (with respect to any norm on $\mathcal{P}_k^n$), one 
can choose the weights connecting the input and the hidden layer of the shallow network independent of the particular polynomial $p$ which is to be approximated;
only the weights connecting hidden and output layer have to be adjusted to $p$.
\begin{Prop}[{cf. \cite[Theorem~3.1]{Geuchen2023}}]\label{prop:neurips}
Let $k,n \in \NN$, $\eps > 0$ and $\rr: \CC \to \CC$ be smooth and non-polyharmonic on a non-empty open
set $U \subseteq \CC$.
Let $\mathcal{P}' \subseteq \mathcal{P}_k^n$ be bounded (with respect to some norm on $\mathcal{P}^n_k$) and set $N \defeq (4k + 1)^{2n}$. 
Then there exists a map $\varphi \in \Aff(\CC^n ; \CC^N)$ with the following property:
For every $p \in \mathcal{P}'$ there exists a map $\psi \in \Aff(\CC^N; \CC)$ with 
\begin{equation*}
\mnorm{ p - \lr{\psi \circ \rr^{\times N} \circ \varphi}}_{C(\Omega_n; \CC)} \leq \eps. 
\end{equation*}
\end{Prop}
We can now prove the desired quantitative approximation statement. 
It is based on \cref{thm:geuchen-lemma,prop:neurips} and uses the fact that each shallow network can be approximated 
up to arbitrary precision by deep narrow CVNNs according to 
\cref{prop:register_to_full,register_model}.
The depth of this CVNN is determined by the number of hidden neurons in the shallow network, which we can quantify according to \cref{thm:geuchen-lemma,prop:neurips}.
To formulate the final result, we further introduce the notation 
\begin{equation}\label{eq:inverse_mod}
\omega^{-1}(f, \eps) \defeq \sup\setcond{\delta > 0 }{ \omega(f,\delta) \leq \eps}
\end{equation}
for $\eps > 0$ and $f \in C(\Omega_n; \CC^m)$.
\begin{Satz}\label{thm:quant}
Let $\rr \in C(\CC; \CC)$ be smooth and non-polyharmonic on some non-empty open set $\emptyset \neq U \subseteq \CC$.
Moreover, let $f =(f_1,\ldots,f_m)\in C(\Omega_n; \CC^m)$ and $\eps > 0$ be given. 
Then there exists a number $N \in \NN$ with 
\begin{equation}\label{eq:depth-bound}
N \leq \lr{ 32\cdot \left[\omega^{-1}\lr{f, \frac{\eps}{3 \cdot \sqrt{2m} \cdot \lr{1 + \frac{n}{2}}}}\right]^{-2} + 9}^{2n}
\end{equation}
and a network $\Phi \in \mathcal{NN}^\rr_{n,m,2n+2m+1,N}$ with $\mnorm{f - \Phi}_{C(\Omega_n; \CC^m)} \leq \eps$.

Moreover, if there exists a point $z_0 \in \CC$ where $\rr$ is real differentiable with 
\begin{equation*}
\wirt \rr (z_0) \neq 0 = \wirtq \rr(z_0),
\end{equation*}
we may pick $\Phi \in \mathcal{NN}^\rr_{n,m,n+m+1,N}$.
If, on the other hand, we have 
\begin{equation*}
\wirt \rr (z_0) = 0 \neq \wirtq \rr(z_0),
\end{equation*}
for some $z_0\in\CC^n$, we may pick $\Phi \in \mathcal{NN}^\rr_{n,m,n+m+1,2N}$.
\end{Satz}
\begin{proof}
We set 
\begin{equation*}
k \defeq \left\lceil \left(\omega^{-1}\left(f, \frac{\eps}{3 \cdot \sqrt{2m}\left(1 + \frac{n}{2}\right)}\right)\right)^{-2} \cdot 4\right\rceil.
\end{equation*}
Note that we have 
\begin{equation*}
k \geq \left(\omega^{-1}\left(f, \frac{\eps}{3 \cdot \sqrt{2m}\left(1 + \frac{n}{2}\right)}\right)\right)^{-2} \cdot 4,
\end{equation*}
which implies 
\begin{equation*}
\frac{2}{\sqrt{k}} \leq \omega^{-1}\left(f, \frac{\eps}{3 \cdot \sqrt{2m}\left(1 + \frac{n}{2}\right)}\right).
\end{equation*}
By \eqref{eq:inverse_mod}, we get 
\begin{equation*}
\omega \left(f, \frac{2}{\sqrt{k}}\right) \leq \frac{\eps}{3 \cdot \sqrt{2m}\cdot \left(1 + \frac{n}{2}\right)}.
\end{equation*}
According to \cref{thm:geuchen-lemma}, there exist polynomials $p_1,\ldots, p_m \in \mathcal{P}^n_{2k}$ with 
\begin{equation*}
\mnorm{f_j - p_j}_{C(\Omega_n;\CC)} \leq \lr{\sqrt{2} + \frac{n}{\sqrt{2}}} \omega\lr{f_j, \frac{2}{\sqrt{k}}}
\end{equation*}
for every $j \in \setn{1,\ldots,m}$.
Letting $p \defeq (p_1,\ldots, p_m)$, we then have 
\begin{align*}
\mnorm{f-p}_{C(\Omega_n; \CC^m)}&\leq \lr{\sum_{j=1}^m \mnorm{f_j-p_j}_{C(\Omega_n;\CC)}^2}^{1/2}\\
&\leq  \lr{\sqrt{2} + \frac{n}{\sqrt{2}}} \cdot \lr{\sum_{j=1}^m \left[ \omega\lr{f_j, \frac{2}{\sqrt{k}}}\right]^2}^{1/2} \\
&\leq \lr{\sqrt{2} + \frac{n}{\sqrt{2}}} \cdot \lr{\sum_{j=1}^m \left[ \omega\lr{f, \frac{2}{\sqrt{k}}}\right]^2}^{1/2} \\
&= \lr{\sqrt{2} + \frac{n}{\sqrt{2}}} \cdot \sqrt{m} \cdot \omega\lr{f, \frac{2}{\sqrt{k}}}\leq \frac{\eps}{3}.
\end{align*}
We set $N \defeq (8k + 1)^{2n}$.
Applying \cref{prop:neurips} to the finite (and therefore bounded) set given by 
$\mathcal{P}' \defeq \setn{p_1,\ldots, p_m}$ yields the existence of functions $\varphi \in \Aff(\CC^n ; \CC^N)$
and $\psi \in \Aff(\CC^N; \CC^m)$ with 
\begin{equation*}
\mnorm{p - \lr{\psi \circ \rr^{\times N} \circ \varphi}}_{C(\Omega_n; \CC^m)} \leq \frac{\eps}{3}.
\end{equation*}
Note that $N$ is independent of $m$ since the weights connecting the input and the hidden layer can be chosen independent of the polynomial $p_j$ 
(see \cref{prop:neurips}).
Since $\rr$ is smooth and non-polyharmonic on a non-empty open set (in particular not constant) there exists $z_0 \in \CC$ such that $\rr$
is real differentiable at $z_0$ with non-zero derivative. 
According to \cref{register_model} and \cref{prop:register_to_full}\ref{item:3} we have 
\begin{equation}\label{eq:split}
\psi \circ \rr^{\times N} \circ \varphi \in \mathcal{I}^{\rr}_{n,m,n+m+1, N} \subseteq \overline{\mathcal{NN}^{\rr}_{n,m,2n+2m+1,N}}.
\end{equation}
Hence, there exists $\Phi \in \mathcal{NN}^\rr_{n,m,2n+2m+1,N}$ with 
\begin{equation*}
\mnorm{\Phi - \lr{\psi \circ \rr^{\times N} \circ \varphi}}_{C(\Omega_n; \CC^m)} \leq \frac{\eps}{3}.
\end{equation*}
By the triangle inequality, we get 
\begin{align*}
\mnorm{f - \Phi}_{C(\Omega_n; \CC^m)}&\leq \mnorm{f - p}_{C(\Omega_n; \CC^m)}+\mnorm{p - \lr{\psi \circ \rr^{\times N} \circ \varphi}}_{C(\Omega_n; \CC^m)} +\mnorm{\Phi - \lr{\psi \circ \rr^{\times N} \circ \varphi}}_{C(\Omega_n; \CC^m)} \\
&\leq \frac{\eps}{3}+\frac{\eps}{3}+\frac{\eps}{3}\leq\eps,
\end{align*}
as desired. 

It remains to estimate the depth $N$ of $\Phi$.
Note that we have 
\begin{equation*}
k \leq \left(\omega^{-1}\left(f, \frac{\eps}{3 \cdot \sqrt{2m}\left(1 + \frac{n}{2}\right)}\right)\right)^{-2} \cdot 4 +1
\end{equation*}
by definition of $k$.
Combining this with $N = (8k+1)^{2n}$, we obtain the upper bound \eqref{eq:depth-bound} for $N$.

The case that there exists $z_0 \in \CC$ such that $\rr$ is differentiable at $z_0$ with 
\begin{equation*}
(\wirt \rr(z_0), \wirtq \rr(z_0)) \neq (0,0)
\end{equation*}
follows analogously, by using \cref{prop:register_to_full}\ref{item:1} and \ref{item:2} in \eqref{eq:split}.
\end{proof}

\section{Necessity of our assumptions}\label{chap:optimality}

The proof of \cref{thm:intro} is not yet complete.
So far, we have proven that activation functions $\rr \in C(\CC;\CC)$ which are neither holomorphic, nor antiholomorphic, nor $\RR$-affine yield universality of CVNNs of width $2n+2m+5$, with indicated improvements under additional assumptions.
The necessity part is done in this section: If $\rr$ is holomorphic, antiholomorphic, or $\RR$-affine, then even the set of CVNNs using activation function $\rr$ with \emph{arbitrary widths and depths} is not universal, cf.~\cref{thm:holo_anti_aff}.
Furthermore, \cref{thm:intro} states that under the mentioned constraints on the activation function, a width of $2n+2m+5$ is sufficient for universality of CVNNs with input dimension $n$ and output dimension $m$.
But could we have done better?
In \cref{thm:lower_bound} below, we show that for a family of real-valued activation functions, the set $\mathcal{NN}^\rr_{n,m,W}$ is not universal when $W<\max\setn{2n,2m}$.
In our final result \cref{thm:necessary_diff}, we show that real differentiability of the activation function at one point with non-vanishing derivative is not necessary for the universal approximation property of deep narrow CVNNs.

We prepare the proof of \cref{thm:holo_anti_aff} by two lemmas, the first of which is about uniform convergence of $\RR$-affine functions.
\begin{Lem}
\label{thm:R-affine}
Let $n,m \in \NN$, $(f_k)_{k \in \NN}$ be a sequence of $\RR$-affine functions from $\RR^n$ to $\RR^m$ and $f:\RR^n \to \RR^m$.
Let $(f_k)_{k \in \NN}$ converge locally uniformly to $f$.
Then $f$ is $\RR$-affine too.
\end{Lem}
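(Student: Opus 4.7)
The plan is to exploit that every $\RR$-affine map $\RR^n \to \RR^m$ is uniquely determined by its values at the $n+1$ affinely independent points $0, e_1, \ldots, e_n$, where $e_1, \ldots, e_n$ denotes the standard basis of $\RR^n$. Since local uniform convergence implies pointwise convergence, reading off the data at these specific points should let me reconstruct an affine limit directly.

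Concretely, I would first write $f_k(x) = A_k x + b_k$ with $A_k \in \RR^{m\times n}$ and $b_k \in \RR^m$, so that $b_k = f_k(0)$ and the $i$-th column of $A_k$ equals $f_k(e_i) - f_k(0)$ for $i \in \{1,\ldots,n\}$. By pointwise convergence, $b_k \to f(0) \eqdef b$, and the $i$-th column of $A_k$ converges to $f(e_i) - f(0)$. Define $A \in \RR^{m\times n}$ to be the matrix whose $i$-th column is $f(e_i) - f(0)$; then $A_k \to A$ entrywise.

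Finally, for any $x \in \RR^n$, pointwise convergence gives $f(x) = \lim_{k\to\infty} f_k(x) = \lim_{k\to\infty}(A_k x + b_k) = Ax + b$, which is $\RR$-affine. No actual obstacle arises: the argument relies only on the fact that the (finite-dimensional) vector space of $\RR$-affine maps is closed under pointwise convergence, and local uniform convergence is far stronger than what is needed. Note also that the lemma does not require any regularity of $f$ as an input assumption — continuity of $f$ follows a posteriori from the proof.
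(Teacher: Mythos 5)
Your proof is correct and follows essentially the same route as the paper: decompose $f_k(x)=A_kx+b_k$, show $b_k$ and $A_k$ converge, and pass to the limit to get $f(x)=Ax+b$. The only (harmless) difference is that you identify the limit matrix directly from the pointwise values $f(e_i)-f(0)$, whereas the paper obtains $A$ via a Cauchy-sequence argument on $(A_k)_{k\in\NN}$; your observation that pointwise convergence already suffices is accurate.
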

\begin{proof}
Let $A_k \in \RR^{m \times n}$ and $b_k \in \RR^m$ such that $f_k(x) = A_k x + b_k$.
Let $b \defeq f(0)$.
Then we have $b_k = f_k(0) \to f(0) = b$.

Furthermore we see for every $j \in \setn{1,\ldots,n}$ that
\begin{equation*}
\mnorm{ A_k e_j - A_\ell e_j}_{\RR^m} \leq \mnorm{ A_k e_j + b_k - A_\ell e_j - b_\ell}_{\RR^m} + \mnorm{b_k - b_\ell}_{\RR^m} \to 0
\end{equation*}
as $k,\ell \to \infty$, uniformly over $j$, meaning $\max_{j \in\setn{1,\ldots,n}} \mnorm{A_ke_j - A_\ell e_j}_{\RR^m} \to 0$ as $k,\ell \to \infty$.
Here $e_j$ denotes the element of $\RR^m$ whose entries are $0$ except for the $j$th which is $1$.

Consequently, $(A_k)_{k \in \NN}$ is a Cauchy sequence and thus converges to some $A \in \RR^{m \times n}$.
We claim $f(x) = Ax + b$ for every $x \in \RR^n$.
Indeed, this follows from
\begin{align*}
\mnorm{A_k x + b_k - A x + b}_{\RR^m} \leq \mnorm{A_k x - A x}_{\RR^m} + \mnorm{b_k - b}_{\RR^m}\leq \mnorm{A_k - A}_{\RR^{m\times n}}  \mnorm{x}_{\RR^m} + \mnorm{b_k - b}_{\RR^m}  \to 0.
\end{align*}
as $k\to\infty$.
\end{proof}
Our second lemma in preparation of the proof of \cref{thm:holo_anti_aff} concerns locally uniform limits of sequences of functions that are either holomorphic or antiholomorphic.
\begin{Lem}
\label{thm:hol_antihol}
Let $\mathcal{F} \defeq \setcond{F: \CC \to \CC }{F \text{ holomorphic or antiholomorphic}}$ and $(f_k)_{k \in \NN}$ be a sequence of functions with $f_k \in \mathcal{F}$ for every $k \in \NN$.
Let $f: \CC \to \CC$ be such that $f_k \to f$ locally uniformly.
Then it holds $f \in \mathcal{F}$.
\end{Lem}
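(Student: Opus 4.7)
The plan is to exploit Weierstrass's classical theorem that a locally uniform limit of holomorphic functions is holomorphic, together with the analogous statement for antiholomorphic functions (which follows immediately by passing to complex conjugates, since $\overline{f_k}\to\overline{f}$ locally uniformly as well). The only subtle point is that the sequence $(f_k)_{k\in\NN}$ may mix holomorphic and antiholomorphic members, so we cannot apply Weierstrass directly; we have to argue by subsequences.

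Concretely, split $\NN$ into the (disjoint) sets
\begin{equation*}
I_{\mathrm{hol}}\defeq\setcond{k\in\NN}{f_k\text{ is holomorphic}},\qquad I_{\mathrm{anti}}\defeq\NN\setminus I_{\mathrm{hol}},
\end{equation*}
so that every $f_k$ with $k\in I_{\mathrm{anti}}$ is antiholomorphic. At least one of these two sets is infinite. First I would treat the easy case where only one of them is infinite: if $I_{\mathrm{anti}}$ is finite, then from some index on all $f_k$ are holomorphic, and Weierstrass's theorem yields $f$ holomorphic; the mirror argument (applied to $\overline{f_k}$) handles the case where $I_{\mathrm{hol}}$ is finite.

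The interesting case is when both $I_{\mathrm{hol}}$ and $I_{\mathrm{anti}}$ are infinite. Then the subsequence $(f_k)_{k\in I_{\mathrm{hol}}}$ still converges locally uniformly to $f$, so by Weierstrass $f$ is holomorphic, i.e., $\wirtq f\equiv 0$; simultaneously, the subsequence $(f_k)_{k\in I_{\mathrm{anti}}}$ converges locally uniformly to $f$, and applying Weierstrass to the sequence of conjugates $(\overline{f_k})_{k\in I_{\mathrm{anti}}}$ gives that $\overline{f}$ is holomorphic, i.e., $f$ is antiholomorphic, so $\wirt f\equiv 0$. Using the relations $\frac{\p f}{\p x}=\wirt f+\wirtq f$ and $\frac{\p f}{\p y}=\ii(\wirt f-\wirtq f)$, both partial derivatives of $f$ vanish identically, so $f$ is constant; constant functions are both holomorphic and antiholomorphic, hence belong to $\mathcal{F}$.

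The main (minor) obstacle is really just the bookkeeping of the two subsequences and the fact that Weierstrass is usually stated for holomorphic sequences on open subsets of $\CC$; here the functions are defined on all of $\CC$, so local uniform convergence on $\CC$ immediately supplies uniform convergence on every compact disk, which is exactly what is needed to invoke the classical theorem. Nothing else beyond the two invocations of Weierstrass and the Wirtinger identity above is required.
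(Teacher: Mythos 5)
Your proposal is correct and follows essentially the same route as the paper: pass to a subsequence consisting entirely of holomorphic (or entirely antiholomorphic) functions, which still converges locally uniformly to $f$, and apply Weierstrass's theorem (to the conjugates in the antiholomorphic case). The extra step in your mixed case—deducing that $f$ is both holomorphic and antiholomorphic and hence constant—is correct but unnecessary, since membership in $\mathcal{F}$ already follows from either one of the two subsequence arguments alone.
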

\begin{proof}
We distinguish two cases:
\begin{enumerate}[label={(\roman*)},leftmargin=*,align=left,noitemsep]
\item{If there exists a subsequence of $(f_k)_{k \in \NN}$ consisting of holomorphic functions, the limit $f$ of this subsequence also has to be holomorphic (see for instance \cite[Theorem~10.28]{Rudin1987}).}
\item{If there exists a subsequence of $(f_k)_{k \in \NN}$ consisting of antiholomorphic functions, the limit $f$ of this subsequence also has to be antiholomorphic, where we again apply \cite[Theorem~10.28]{Rudin1987} to the complex conjugates of the functions in this subsequence.}
\end{enumerate}
\end{proof}
The necessity part of \cref{thm:intro} is covered by the following theorem.
\begin{Satz}\label{thm:holo_anti_aff}
Let $n,m \in \NN$ and $\rr: \CC \to \CC$ be holomorphic or antiholomorphic or $\RR$-affine.
Then $\mathcal{NN}^\rr_{n,m}\defeq \bigcup\limits_{W \in \NN} \mathcal{NN}^\rr_{n,m,W}$ is not universal.
\end{Satz}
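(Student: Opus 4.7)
The plan is to exhibit, in each of the three cases, a proper subclass $\mathcal{G}\subsetneq C(\CC^n;\CC^m)$ that is closed under local uniform convergence and contains all of $\mathcal{NN}^\rr_{n,m}$. By \cref{prop:equivalence}, this will rule out universality.

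If $\rr$ is $\RR$-affine, I would identify $\CC^n\cong\RR^{2n}$ and $\CC^m\cong\RR^{2m}$ and observe that every $\CC$-affine map is $\RR$-affine, that the componentwise map $\rr^{\times W}$ is $\RR$-affine, and that compositions of $\RR$-affine maps remain $\RR$-affine, so each element of $\mathcal{NN}^\rr_{n,m}$ is $\RR$-affine. \cref{thm:R-affine} then closes this class under local uniform limits. If $\rr$ is holomorphic, the same pattern would apply with $\mathcal{G}$ the holomorphic maps $\CC^n\to\CC^m$: compositions of holomorphic maps of several complex variables are holomorphic, and local uniform limits of such remain holomorphic (the standard multivariate extension of the single-variable fact used in \cref{thm:hol_antihol}). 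The resulting subclasses are proper because, for example, $z\mapsto z\overline{z}$ is continuous but neither $\RR$-affine, nor holomorphic, nor antiholomorphic.

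The antiholomorphic case needs a small parity argument, since the composition of two antiholomorphic maps is holomorphic. A direct Wirtinger-derivative computation, applied componentwise for several complex variables, shows that holomorphic $\circ$ holomorphic and antiholomorphic $\circ$ antiholomorphic are both holomorphic, whereas the two mixed compositions are antiholomorphic. In a CVNN of depth $L$, the $L$ affine layers $V_j$ are holomorphic while the $L-1$ activations $\rr^{\times W}$ are antiholomorphic, so the overall composite is holomorphic for odd $L$ and antiholomorphic for even $L$. Hence every element of $\mathcal{NN}^\rr_{n,m}$ lies in the class $\mathcal{F}_{n,m}$ of continuous maps $\CC^n\to\CC^m$ that are holomorphic or antiholomorphic. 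To see that $\mathcal{F}_{n,m}$ is closed under local uniform convergence, I would extract from any convergent sequence a subsequence of constant holomorphy type and apply the multivariate, componentwise extension of \cref{thm:hol_antihol}. The main obstacle I anticipate is this parity bookkeeping together with the promotion of \cref{thm:hol_antihol,thm:R-affine} from scalar single-variable to vector multivariable maps, both of which are routine once the Wirtinger composition rules are in hand.
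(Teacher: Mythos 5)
Your argument is correct, but it follows a genuinely different route from the paper for the holomorphic/antiholomorphic cases. You work directly with maps $\CC^n\to\CC^m$: you prove the parity fact (a depth-$L$ network with antiholomorphic activation is holomorphic for odd $L$ and antiholomorphic for even $L$) yourself via the Wirtinger chain rule, and you then need the several-variables, vector-valued analogues of \cref{thm:hol_antihol} (i.e.\ the multivariate Weierstrass-type theorem that locally uniform limits of holomorphic maps are holomorphic), plus a properness witness such as $(z_1,\dots,z_n)\mapsto(z_1\overline{z_1},0,\dots,0)$. The paper instead first reduces $m$ to $1$ by componentwise approximation and then reduces $n$ to $1$ through the projection $\pi(z)=z_1$ and the embedding $\widetilde{\pi}(z)=(z,0,\dots,0)$, arguing by contradiction: if $\mathcal{NN}^\rr_{n,1}$ were universal then $\mathcal{NN}^\rr_{1,1}$ would be, and the latter is excluded using only the scalar, single-variable closure lemmas \cref{thm:hol_antihol,thm:R-affine} (together with \cref{prop:equivalence,prop:comp_loc_conv}); the parity fact itself is outsourced to a citation of Voigtlaender. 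Your version buys self-containedness on the parity bookkeeping and avoids the reduction step, at the price of invoking the (standard, but not established in the paper) multivariate convergence theorem for holomorphic functions; the paper's version stays entirely within the one-variable lemmas it has already proved. In the $\RR$-affine case the two arguments essentially coincide, since \cref{thm:R-affine} is already stated for $\RR^n\to\RR^m$ and applies after the identification $\CC^n\cong\RR^{2n}$. If you write your version up, do state explicitly the multivariate limit theorem you use and the subsequence extraction that fixes the holomorphy type, and exhibit the non-member function for general $n,m$; these are routine but are exactly the points your sketch leaves implicit.
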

\begin{proof}
It suffices to show the claim for $m=1$, since local uniform approximation in $\CC^m$ means componentwise local uniform approximation.

We start with the case $n=1$.
If $\rr$ is holomorphic or antiholomorphic it follows that all the elements of $\mathcal{NN}^\rr_{1,1}$ are holomorphic or antiholomorphic (see, e.g., \cite[Proof of Eq.~(4.15), p.~28]{Voigtlaender2022}.
But then it follows from \cref{thm:hol_antihol,prop:equivalence} that $\mathcal{NN}^\rr_{1,1}$ is not universal.
If $\rr$ is $\RR$-affine, each element of $\mathcal{NN}^\rr_{1,1}$ is $\RR$-affine (as the composition of $\RR$-affine functions).
By \cref{thm:R-affine,prop:equivalence} it follows that $\mathcal{NN}^\rr_{1,1}$ is not universal.

The case $n > 1$ can be reduced to the case $n=1$ in the following way: Assume that $\mathcal{NN}^\rr_{n,1}$ is universal and pick any arbitrary function $f \in C(\CC;\CC)$.
Let $\pi: \CC^n \to \CC$, $\pi(z_1,\ldots,z_n) = z_1$ and $\widetilde{\pi}:\CC \to \CC^n$, $\widetilde{\pi}(z) = (z,0,\ldots,0)$.
Note that it holds $\pi \circ \widetilde{\pi} = \id_\CC$.
By assumption, there exists a sequence $(g_k)_{k \in \NN}$ with $g_k \in \mathcal{NN}^\rr_{n,1}$  for $k \in \NN$ and $g_k \to f \circ \pi$ locally uniformly.
From \cref{prop:comp_loc_conv} it follows $g_k \circ \widetilde{\pi} \to f$ locally uniformly.
Since $g_k \circ \widetilde{\pi} \in \mathcal{NN}^\rr_{1,1}$ for every $k \in \NN$ it follows that $\mathcal{NN}^\rr_{1,1}$ is universal, in contradiction to what has just been shown.
\end{proof}

In the previous sections, we showed that for a large class of activation functions a width of $2n+2m+5$ is sufficient for universality of CVNNs with input dimension $n$ and output dimension $m$.
Following the lines of \cite[Lemma~1]{Cai2022}, we show next that for some activation functions, a width of at least $\max\setn{2n,2m}$ is necessary to guarantee universality.

\begin{Satz}\label{thm:lower_bound}
Let $n,m\in\NN$.
\begin{enumerate}[label={(\roman*)},leftmargin=*,align=left,noitemsep]
\item{Let $\phi \in C(\RR;\CC)$, and $\rr:\CC\to\CC$ be given by $\rr(z) \defeq \phi(\RE(z))$.
Then $\mathcal{NN}^\rr_{n,m,2n-1}$ is not universal.\label{input}}
\item{Let $\rr:\CC\to\RR$.
Then $\mathcal{NN}^\rr_{n,m,2m-1}$ is not universal.\label{output}}
\end{enumerate}
\end{Satz}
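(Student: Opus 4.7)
Both parts exploit a dimensional obstruction on the network's outputs, in the spirit of \cite[Lemma~1]{Cai2022}.

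For \ref{input}, the key observation is that $\rr$ factors through $\RE$: for any $f\in\mathcal{NN}^\rr_{n,m,2n-1}$ with first affine map $z\mapsto A_1 z+b_1$, the $j$th neuron of the first hidden layer outputs $\phi(\RE((A_1)_{j,\cdot}z+(b_1)_j))$, which depends on $z\in\CC^n\cong\RR^{2n}$ only through the $\RR$-linear functional $\ell_j(z)\defeq\RE((A_1)_{j,\cdot}z)$. Having only $2n-1$ such functionals on the $2n$-dimensional space $\RR^{2n}$, their common kernel contains a unit vector $v_f\in\RR^{2n}\cong\CC^n$. The first hidden layer output is thus invariant under $z\mapsto z+tv_f$ for every $t\in\RR$, and since subsequent layers merely post-compose, $f(z+tv_f)=f(z)$ for every $t\in\RR$. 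I would then exhibit a target that no such invariant function can approximate: on $K\defeq\bc{0}{1}\subset\CC^n$, take $g(z)\defeq\mnorm{z}_{\CC^n}^2 u$ for some fixed $u\in\CC^m$ with $\mnorm{u}_{\CC^m}=1$. Choosing $z_1\defeq 0$ and $z_2\defeq v_f$, both in $K$, we have $f(z_1)=f(z_2)$ and $\mnorm{g(z_2)-g(z_1)}_{\CC^m}=1$, so the triangle inequality forces $\sup_{z\in K}\mnorm{f(z)-g(z)}_{\CC^m}\geq\tfrac12$ regardless of $f$.

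For \ref{output}, the analogous observation is that since $\rr$ is real-valued, every hidden layer output of any $f\in\mathcal{NN}^\rr_{n,m,2m-1}$ lies in $\RR^{2m-1}\subset\CC^{2m-1}$. The restriction of the final affine map $V_L:\CC^{2m-1}\to\CC^m$ to $\RR^{2m-1}$ is an $\RR$-affine map into $\CC^m\cong\RR^{2m}$ whose image is a real affine subspace of dimension at most $2m-1$. Hence $f(\CC^n)$ is contained in a real affine hyperplane $H_f\subset\RR^{2m}$. To exploit this, I would fix $2m+1$ distinct points $z_0,\ldots,z_{2m}\in\CC^n$ and $\RR$-affinely independent points $p_0,\ldots,p_{2m}\in\CC^m\cong\RR^{2m}$, and extend $z_i\mapsto p_i$ to a $g\in C(\CC^n;\CC^m)$ by Tietze's extension theorem. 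A spread argument --- for any unit normal $\nu$ and offset $d\in\RR$, $\max_i|\nu\cdot p_i-d|\geq\tfrac12(\max_i\nu\cdot p_i-\min_i\nu\cdot p_i)$ --- combined with the compactness of the unit sphere in $\RR^{2m}$ and the fact that $p_1-p_0,\ldots,p_{2m}-p_0$ form a basis of $\RR^{2m}$ yields a constant $c_0>0$ with $\max_i\dist(p_i,H)\geq c_0$ for every real affine hyperplane $H\subset\RR^{2m}$. Applied to $H_f$, this forces some $i$ to satisfy $\mnorm{f(z_i)-p_i}_{\CC^m}\geq c_0$, obstructing approximation of $g$ within $c_0$ on any compact containing $\{z_0,\ldots,z_{2m}\}$.

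The main obstacle is propagating the structural constraints through the network layer-by-layer. In \ref{input}, the subtle point is ensuring that the invariance direction $v_f$ of the preactivations of the first layer really induces an \emph{exact} invariance of the entire composite $f$; this is clean because all subsequent layers depend on $z$ only via $h_1(z)$. In \ref{output}, the key point is establishing the uniform positive lower bound $c_0$ on distances from the $p_i$'s to an arbitrary hyperplane, which, once cast as the minimization of a continuous positive function over a compact sphere, reduces to the linear independence of the vectors $p_i-p_0$.
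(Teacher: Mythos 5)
Your proposal is correct, and its structural core coincides with the paper's: in part \ref{input} both arguments observe that the first hidden layer depends on $z\in\CC^n\cong\RR^{2n}$ only through $2n-1$ real-linear functionals, so some unit vector $v$ in their common kernel makes the whole network invariant under $z\mapsto z+v$; in part \ref{output} both observe that a real-valued activation forces the range of the network into a real affine subspace of $\RR^{2m}$ of dimension at most $2m-1$. The difference lies in how each obstruction is converted into a quantitative non-approximation statement. For \ref{input}, the paper integrates $\mnorm{f-g}$ over two small balls $A$ and $A+v$, gets a lower bound on the $L^1$-error, and passes to the uniform norm via H\"older; you instead use a direct two-point triangle inequality ($f(0)=f(v)$ while the target $\mnorm{z}^2u$ differs by $1$ at these points), which is shorter and avoids any integration, at the cost of nothing. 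For \ref{output}, the paper constructs a semi-explicit target whose range runs through all vertices of the cube $[0,1]^m+\ii[0,1]^m$ and cites an external result of Brandenberg to get the explicit constant $\tfrac12$; you instead take $2m+1$ affinely independent points, extend by Tietze, and prove the needed uniform lower bound $c_0>0$ on $\max_i\dist(p_i,H)$ over all hyperplanes $H$ by minimizing the spread $\nu\mapsto\max_i\nu\cdot p_i-\min_i\nu\cdot p_i$ over the compact unit sphere, using that the $p_i-p_0$ span $\RR^{2m}$. Your route is self-contained and more elementary (no external citation, no explicit curve construction), while the paper's yields an explicit constant; both are complete proofs of the stated non-universality.
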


\begin{proof}
We start with \ref{input}.
Let $K \defeq [-2,2]^n+\ii [-2,2]^n \subseteq \CC^n$ and $f(z) \defeq (\mnorm{z}_{\CC^n},0,\ldots,0)$ for $z \in \CC^n$.
Let $g \in \mathcal{NN}^\rr_{n,m,2n-1}$ be arbitrary.
From the definition of $\rr$, it follows that we may write $g$ as
\begin{equation*}
g(z) = \psi (\RE(V z) + b),
\end{equation*}
where $\psi:\CC^{2n-1} \to \CC^m$ is some function, $V \in \CC^{(2n-1) \times n}$, $b \in \RR^{2n-1}$, and the real part $\RE$ is taken componentwise.
Interpreting $\RE \circ V$ as an $\RR$-linear function from $\RR^{2n}$ to $\RR^{2n-1}$, we conclude from $2n-1 < 2n$ that there exists $v \in \CC^n$ with $\mnorm{v}_{\CC^n} = 1$ satisfying $\RE (Vv) = 0$ and hence
\begin{equation}
\label{eq:kernel_vector}
g(z+v) = g(z) \text{ for any } z \in \CC^n.
\end{equation}
We set 
\begin{equation*}
A \defeq \setcond{z \in \CC^n }{ \mnorm{z}_{\CC^n} < \frac{1}{10}} \quad \text{and} 
\quad B \defeq \setcond{ z \in \CC^n }{\mnorm{z-v}_{\CC^n} < \frac{1}{10}}
\end{equation*}
and compute
\begin{align*}
\int_K \mnorm{f(z) - g(z)}_{\CC^m} \dd z &\geq \int_A \mnorm{ f(z) - g(z) }_{\CC^m} \dd z + \int_B  \mnorm{f(z) - g(z)}_{\CC^m}\dd z \\
\overset{B = A + v}&{=} \int_A \lr{\mnorm{ f(z) - g(z)}_{\CC^m} + \mnorm{ f(z+ v) - g(z+v)}_{\CC^m} }\dd z \\
\overset{\eqref{eq:kernel_vector}}&{\geq} \int_A \mnorm{ f(z) - f(z+v)}_{\CC^m} \dd z \geq 0.8 \cdot \lambda^{2n} (A)
\end{align*}
with $\lambda^{2n}$ denoting the $2n$-dimensional Lebesgue measure.
In the last inequality we used
\begin{equation*}
\abs{ \mnorm{z}_{\CC^n}  - \mnorm{z + v}_{\CC^n}} \geq \mnorm{z + v}_{\CC^n} - \mnorm{z}_{\CC^n}   \geq \mnorm{v}_{\CC^n} - 2 \mnorm{z}_{\CC^n} \geq 0.8.
\end{equation*}
Hence it follows that $\mathcal{NN}^\rr_{n,m,2n-1}$ is not dense in $C(K;\CC^m)$ with respect to the $L^1$-norm and thus, using Hölder's inequality, it follows that $\mathcal{NN}^\rr_{n,m,2n-1}$ is not dense in $C(K;\CC^m)$ with respect to the $L^p$-norm for any $p \in [1, \infty]$, so in particular for $p = \infty$ which shows that $\mathcal{NN}^\rr_{n,m,2n-1}$ is not universal.

Next, we prove \ref{output}.
To this end, we construct a function $f \in C(\CC^n;\CC^m)$, a compact set $K\subset\CC^n$, and a number $\eps>0$ such that
\begin{equation*}
\sup_{z\in K}\mnorm{f(z)-g(z)}_{\CC^m}\geq\eps
\end{equation*}
for all $g\in \mathcal{NN}^\rr_{n,m,2m-1}$.
For a moment, fix $g\in \mathcal{NN}^\rr_{n,m,2m-1}$.
Since the activation function $\rr$ is real-valued, the output of the last but one layer of $g$ is a function $\psi:\CC^n\to\RR^{2m-1}$.
Also, there exist a matrix $V\in\CC^{m\times (2m-1)}$ and a vector $b\in\CC^m$ such that $g(z)=V\psi(z)+b$.
With $\CC^m\cong \RR^{2m}$, we may view the restriction of $V$ to $\RR^{2m-1}$ as an $\RR$-linear map $\RR^{2m-1}\to\RR^{2m}$, 
and the range $\setcond{g(z)}{z\in\CC}$ of $g$ is thus contained in a $(2m-1)$-dimensional affine subspace $U=U(g)$ of $\RR^{2m}$.
As
\begin{equation*}
\sup_{z\in K}\mnorm{f(z)-g(z)}_{\RR^{2m}}\geq \sup_{z\in K}\inf_{u\in U(g)}\mnorm{f(z)-u}_{\RR^{2m}},
\end{equation*}
it is sufficient for our purposes to find a function $f \in C(\CC^n;\CC^m)$, a compact set $K\subset\CC^n$, and a number $\eps>0$ such that
\begin{equation*}
\inf_{U}\sup_{z\in K}\inf_{u\in U}\mnorm{f(z)-u}_{\RR^{2m}}\geq \eps
\end{equation*}
where the outermost infimum traverses the $(2m-1)$-dimensional affine subspaces $U$ of $\RR^{2m}$.
This is achieved by a function $f$ whose range $\setcond{f(z)}{z\in\CC^n}$ is not contained in any $(2m-1)$-dimensional affine subspace $U$ of $\RR^{2m}$.
A semi-explicit construction is like this:

Let $K\defeq\setcond{(\lambda,0,\ldots,0)}{\lambda\in\RR,0\leq\lambda\leq 1}\subseteq\CC^n$ and
\begin{equation*}
f_1:\CC^n\to [0,1],\quad f_1(z_1,\ldots,z_n)=\max\setns{0,\min\setns{1,\RE(z_1)}}.
\end{equation*}
Further let $f_2:[0,1]\to\CC^m$ be a parameterization of a curve that along the edges of the cube $Q\defeq [0,1]^m+\ii[0,1]^m\subseteq \CC^m\cong \RR^{2m}$ passes through all of its vertices $\setn{0,1}^m+\ii \setn{0,1}^m\subseteq \CC^m\cong \RR^{2m}$, and $f=f_2\circ f_1$.
From \cite[Table~1]{Brandenberg2005}, we deduce
\begin{equation*}
\inf_{U}\sup_{z\in K}\inf_{u\in U}\mnorm{f(z)-u}_{\RR^{2m}}\geq \frac{1}{2}
\end{equation*}
and this finishes the proof.
\end{proof}
Note that there are non-polyharmonic functions like $\rr(z)=\ee^{\RE(z)}$ but also polyharmonic functions that are neither holomorphic, nor anti-holomorphic, nor $\RR$-affine like $\rr(z)=\RE(z)^2$ that meet the assumptions made in \cref{thm:lower_bound}.

Following the ideas from \cite[Proposition~4.15]{KidgerLy2020} we also want to add a short note on the necessity of the differentiability of the activation function.
It turns out that the differentiability of the activation function is \emph{not} a necessary condition for the fact that narrow networks with width $n+m+1$ using this activation function have the universal approximation property.
The proof is in fact identical to the proof presented in \cite{KidgerLy2020}.
However, we include a detailed proof to clarify that the reasoning also works in the case of activation functions $\CC \to \CC$.
\begin{Satz}\label{thm:necessary_diff}
Take any function $w \in C(\CC; \CC)$ which is bounded and nowhere real differentiable.
Then $\rr(z) \defeq \sin(z) + w(z)  \exp(-z)$ is also nowhere differentiable and $\mathcal{NN}^\rr_{n,m,n+m+1}$ is universal.
\end{Satz}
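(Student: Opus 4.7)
My plan is to adapt the proof strategy of Theorem~\ref{main_theorem_non_poly_classical_reg}\ref{main_theorem_non_poly_classical_reg_item1}: first establish that $\rr$ is non-polyharmonic so that Voigtlaender's Theorem~\ref{thm:felix} yields universality of shallow CVNNs with activation $\rr$, and then approximate the identity on $\CC$ by shallow CVNNs of width $1$ so that the register model of Proposition~\ref{register_model} lifts shallow to deep narrow universality.

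For the first step, I would verify that $\rr$ is nowhere real differentiable by noting that $\sin$ and $\exp(-\cdot)$ are real differentiable everywhere with $\exp(-z_0) \neq 0$, so any hypothetical real differentiability of $\rr$ at $z_0$ would force real differentiability of $z \mapsto w(z)\exp(-z)$ at $z_0$, and then, after multiplication by the nowhere-vanishing real differentiable factor $z \mapsto \exp(z)$, of $w$ itself, contradicting the assumption on $w$. Since $\rr$ is in particular not $C^2$, it cannot be polyharmonic, so Theorem~\ref{thm:felix} gives universality of $\mathcal{SN}^\rr_{n,1}$ (and, by assembling components, of $\mathcal{SN}^\rr_{n,m}$).

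The core new ingredient is the approximation of $\id_\CC$. Since the complex sine has period $2\pi$, for $h > 0$ and $k \in \NN$ the affine pre- and post-composition $\phi_{h,k}(z) = hz + 2\pi k$, $\psi_h(u) = u/h$ produces
\begin{equation*}
(\psi_h \circ \rr \circ \phi_{h,k})(z) = \frac{\sin(hz)}{h} + \frac{w(hz + 2\pi k)}{h} \exp(-hz)\exp(-2\pi k).
\end{equation*}
On any compact $K \subseteq \CC$, the first summand converges uniformly to $z$ as $h \downarrow 0$ by the standard estimate $\abs{\sin(hz)/h - z} \leq C h^2 \abs{z}^3$; for each fixed $h > 0$, the second summand is bounded in modulus by $\mnorm{w}_\infty \sup_{z \in K}\abs{\exp(-hz)} \cdot \exp(-2\pi k)/h$ and hence tends uniformly to $0$ as $k \to \infty$. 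Choosing $h$ sufficiently small, and then $k$ sufficiently large, yields arbitrary uniform approximation of $\id_\CC$ on $K$ by shallow CVNNs of width $1$ with activation $\rr$.

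With these two ingredients in place, the argument of Theorem~\ref{main_theorem_non_poly_classical_reg}\ref{main_theorem_non_poly_classical_reg_item1} carries over essentially verbatim: write an arbitrary target $f \in C(\CC^n;\CC^m)$ as a locally uniform limit of shallow CVNNs via Theorem~\ref{thm:felix}, rewrite each such shallow CVNN as a register model via Proposition~\ref{register_model}, and replace each identity connection in the register model by the approximation constructed above, combining the limits via Propositions~\ref{prop:equivalence} and~\ref{prop:comp_loc_conv}. The only subtle point I anticipate is the correct ordering of the two limits in $h$ and $k$ when approximating $\id_\CC$; once this two-scale interplay is handled, the remainder is a faithful replay of the earlier proof and yields universality of $\mathcal{NN}^\rr_{n,m,n+m+1}$.
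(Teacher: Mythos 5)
Your proposal is correct and follows essentially the same route as the paper: the same affine maps $\phi(z)=hz+2\piup k$, $\psi(u)=u/h$ exploiting the $2\piup$-periodicity of $\sin$ and the boundedness of $w$ with the decaying factor $\exp(-2\piup k)$, the same two-step choice of $h$ small then $k$ large, and the same reduction to the register-model argument of \cref{main_theorem_non_poly_classical_reg}\ref{main_theorem_non_poly_classical_reg_item1}. The only (harmless) addition is your explicit verification that $\rr$ is nowhere real differentiable, which the paper asserts without detail.
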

\begin{proof}
Since $\rr$ is non-polyharmonic (since it is nowhere differentiable) it suffices to show that the identity function can be uniformly approximated on compact sets using compositions 
that have the form $\psi \circ \rr \circ \phi$ with $\phi, \psi \in \Aff(\CC; \CC)$.
Then the statement can be derived similarly to the proof of \cref{main_theorem_non_poly_classical_reg}\ref{main_theorem_non_poly_classical_reg_item1}.

Therefore, take any compact set $K \subseteq \CC$ and $\eps > 0$.
Choose $M_1 > 0$ with $\abs{z} \leq M_1$ for every $z \in K$.
Take $h> 0$ arbitrary and consider
\begin{equation*}
\sup_{z \in K \setminus \setn{0}}\abs{\frac{\sin(hz) - hz}{h}} \leq \sup_{z \in K \setminus \setn{0}}M_1 \abs{\frac{\sin(hz)}{hz} - 1} \to 0
\end{equation*}
as $h\to 0$.
Therefore, we may take $h > 0$ with
\begin{equation*}
\abs{\frac{\sin(hz) - hz}{h}} < \frac{\eps}{2}
\end{equation*}
for every $z \in K$.
Furthermore, choose $M_2 > 0$ with $\abs{w(z)} \leq M_2$ for every $z \in \CC$ and pick $k \in \NN$ large enough such that
\begin{equation*}
\frac{\abs{\exp(-hz)}\abs{\exp(-2\piup k)}}{h} < \frac{\eps}{2M_2}
\end{equation*}
for all $z \in K$.
Hence we derive
\begin{align*}
&\norel \abs{\frac{\sin(hz + 2\piup k) + w(hz + 2\piup k)\exp(-hz - 2\piup k)}{h} - z } \\
&= \abs{\frac{\sin(hz ) + w(hz + 2\piup k)\exp(-hz - 2\piup k)}{h} - z }  \\
&\leq \abs{\frac{\sin(hz) - hz}{h}} + M_2\frac{\abs{\exp(-hz)} \abs{ \exp(-2\piup k)}}{h} < \eps.
\end{align*}
Thus, we get the claim by defining $\phi(z) \defeq hz + 2\piup k$ and $\psi(z) \defeq \frac{1}{h}z$.
\end{proof}

\appendix

\section{Topological notes on locally uniform convergence}
\label{loc_uniform_convergence}

In this appendix, we discuss the relationship between locally uniform convergence, the compact open topology, and the universal approximation property introduced in \Cref{def:universal_approximation_property}.
Note that \cite[Appendix~B]{Park2022} is another account on the same topic.

Although locally uniform convergence can be studied more generally for functions defined on a topological space and taking values in a metric space, we restrict ourselves to functions $\CC^n\to\CC^m$.
\begin{Def}
	Let $(f_k)_{k \in \NN}$ be a sequence of functions $f_k : \CC^n \to \CC^m$ and $f: \CC^n \to \CC^m$.
	The sequence $(f_k)_{k \in \NN}$ converges locally uniformly to $f$, if for every compact set $K \subseteq \CC^n$ we have
\begin{equation*}
\sup_{z\in K} \mnorm{f_k(z) - f (z)}_{\CC^m} \xrightarrow{k \to \infty} 0.
\end{equation*}
\end{Def}

There is a certain equivalence between locally uniform convergence and the universal approximation property introduced in \Cref{def:universal_approximation_property}.
\begin{Prop}
\label{prop:equivalence}
Let $\mathcal{F} \subseteq C(\CC^n;\CC^m)$ and $f \in C(\CC^n;\CC^m)$.
Then the following are equivalent:
\begin{enumerate}[label={(\roman*)},leftmargin=*,align=left,noitemsep]
\item{\label{approximation}For every compact set $K \subseteq \CC^n$ and $\eps > 0$, there exists a function $g \in \mathcal{F}$ satisfying
\begin{equation*}
\mnorm{f - g}_{C(K;\CC^m)}< \eps.
\end{equation*}}
\item{\label{convergence}There exists a sequence $(f_k)_{k \in \NN}$ with $f_k \in \mathcal{F}$ for $k \in \NN$ such that $(f_k)_{k \in \NN}$ converges locally uniformly to $f$.}
\end{enumerate}
\end{Prop}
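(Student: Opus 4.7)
The plan is to prove each implication directly, with the forward direction $(i) \Rightarrow (ii)$ relying on an exhaustion of $\CC^n$ by an increasing sequence of compact sets, and the reverse $(ii) \Rightarrow (i)$ being essentially immediate from the definition of locally uniform convergence.

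For $(ii) \Rightarrow (i)$, I would fix a compact set $K \subseteq \CC^n$ and $\eps > 0$. Since $(f_k)_{k \in \NN}$ converges locally uniformly to $f$, the restriction of the sequence to $K$ converges uniformly, so there exists $k_0 \in \NN$ such that $\mnorm{f - f_{k_0}}_{C(K;\CC^m)} < \eps$. Taking $g \defeq f_{k_0} \in \mathcal{F}$ yields the claim.

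For $(i) \Rightarrow (ii)$, I would use the exhaustion $K_j \defeq \bc{0}{j} \subseteq \CC^n$ for $j \in \NN$. Each $K_j$ is compact, and $\bigcup_{j \in \NN} K_j = \CC^n$. Applying $(i)$ with $K = K_j$ and $\eps = 1/j$ produces, for every $j \in \NN$, a function $f_j \in \mathcal{F}$ satisfying $\mnorm{f - f_j}_{C(K_j;\CC^m)} < 1/j$. Now let $K \subseteq \CC^n$ be an arbitrary compact set. Since $K$ is bounded, there exists $j_0 \in \NN$ with $K \subseteq K_{j_0}$, and hence $K \subseteq K_j$ for every $j \geq j_0$. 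For such $j$ we have
\begin{equation*}
\sup_{z \in K} \mnorm{f_j(z) - f(z)}_{\CC^m} \leq \sup_{z \in K_j} \mnorm{f_j(z) - f(z)}_{\CC^m} < \frac{1}{j},
\end{equation*}
which tends to $0$ as $j \to \infty$. This shows that $(f_j)_{j \in \NN}$ converges locally uniformly to $f$.

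There is no real obstacle here; the argument is a standard exhaustion-by-compacts trick combined with the definition of uniform convergence on compact sets. The only thing to be careful about is to make sure the approximating sequence constructed in $(i) \Rightarrow (ii)$ is indexed in such a way that the error bounds form a null sequence, which is immediate from choosing $\eps = 1/j$.
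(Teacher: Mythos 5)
Your proof is correct and follows essentially the same route as the paper: the reverse implication is immediate from the definition of locally uniform convergence, and the forward implication uses the exhaustion of $\CC^n$ by the closed balls $\bc{0}{j}$ with tolerance $1/j$, exactly as in the paper's argument.
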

\begin{proof}
We start with the implication \ref{approximation}$\Rightarrow$\ref{convergence}.
Let $f \in C(\CC^n;\CC^m)$.
For every $k \in \NN$, choose $f_k \in \mathcal{F}$ with
\begin{equation*}
\mnorm{f_k - f}_{C(\bc{0}{k};\CC^m)} \leq \frac{1}{k}.
\end{equation*}
Then $(f_k)_{k \in \NN}$ converges locally uniformly to $f$, since every compact set $K \subseteq \CC^n$ is contained in the closed ball
 $\bc{0}{k}$ with radius $k$ and center $0$ for all $k \geq J$ for some $J \in \NN$.

Now we show the implication \ref{convergence}$\Rightarrow$\ref{approximation}.
For any compact set $K \subseteq \CC^n$ and $\eps > 0$ we know by definition of locally uniform convergence that there exists $k \in \NN$ satisfying
\begin{equation*}
\mnorm{f_k - f}_{C(K;\CC^m)} < \eps.
\end{equation*}
Since $f_k \in \mathcal{F}$, this shows \ref{approximation}.
\end{proof}

Next, we show that locally uniform convergence of sequences $(f_k)_{k \in \NN}$ of elements $f_k\in C(\CC^n;\CC^m)$ coincides with convergence with respect to the \emph{compact-open topology}, cf.~\cite[Definition~XII.1.1]{dugundji_topology_1987}.
Hence, the compact-open topology is the topology in charge when we speak about universality of a set of continuous functions.
\begin{Def} \label{def:compact_open}
For each pair of sets $A \subseteq \CC^n, B \subseteq \CC^m$, we denote
\begin{equation*}
(A,B) \defeq \setcond{f \in C(\CC^n;\CC^m)}{ f(A) \subseteq B}.
\end{equation*}
The \emph{compact-open topology} on $C(\CC^n;\CC^m)$ is then the smallest topology containing the sets $(K,V)$, where $K \subseteq \CC^n$ is compact and $V \subseteq \CC^m$ is open.
\end{Def}

That this topology indeed induces locally uniform convergence is a direct consequence of \cite[Theorem~XII.7.2]{dugundji_topology_1987} and $\CC^m$ being a metric space.
\begin{Prop}
\label{prop:comp_open_loc_unif_conv}
Let $(f_k)_{k \in \NN}$ be a sequence of functions with $f_k \in C(\CC^n;\CC^m)$ and $f \in C(\CC^n;\CC^m)$.
Then the following statements are equivalent.
\begin{enumerate}[label={(\roman*)},leftmargin=*,align=left,noitemsep]
\item{The sequence $(f_k)_{k\in\NN}$ converges to $f$ in the compact-open topology.}
\item{The sequence $(f_k)_{k\in \NN}$ converges to $f$ locally uniformly.}
\end{enumerate}
\end{Prop}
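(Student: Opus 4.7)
The plan is to prove both implications directly from \cref{def:compact_open}, exploiting the fact that $\CC^n$ is locally compact Hausdorff and $\CC^m$ is a metric space with the Euclidean norm. The statement is a classical equivalence in general topology, but it is worth unpacking concretely in this setting.

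For the forward direction, I would fix an arbitrary sub-basic open neighborhood $(K, V)$ of $f$, where $K \subseteq \CC^n$ is compact, $V \subseteq \CC^m$ is open, and $f(K) \subseteq V$. Since $f(K)$ is compact and disjoint from the closed set $\CC^m \setminus V$, the number $\delta \defeq \dist(f(K), \CC^m \setminus V)$ is strictly positive (interpreted as $+\infty$ if $V = \CC^m$). Locally uniform convergence on $K$ then supplies an index $k_0$ with $\sup_{z \in K}\mnorm{f_k(z) - f(z)}_{\CC^m} < \delta$ for all $k \geq k_0$, which forces $f_k(K) \subseteq V$, i.e., $f_k \in (K, V)$. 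Any basic open neighborhood of $f$ is a finite intersection of such sub-basic sets, so taking the maximum of the finitely many thresholds handles the general case.

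For the reverse direction, I would fix a compact set $K \subseteq \CC^n$ and an $\eps > 0$ and exploit the fact that the restriction $f|_K$ is uniformly continuous. Choose $\delta > 0$ such that $\mnorm{z - z'}_{\CC^n} < \delta$ implies $\mnorm{f(z) - f(z')}_{\CC^m} < \eps/3$ for all $z, z' \in K$; cover $K$ by finitely many open balls $\bo{z_j}{\delta}$, $j \in \setn{1,\ldots,N}$, with $z_j \in K$, and set $K_j \defeq K \cap \bc{z_j}{\delta}$, which is compact. The basic compact-open neighborhood
\begin{equation*}
U \defeq \bigcap_{j=1}^{N} \bigl( K_j,\, \bo{f(z_j)}{\eps/2} \bigr)
\end{equation*}
contains $f$ by construction, so by hypothesis $f_k \in U$ for all sufficiently large $k$. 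For such $k$ and any $z \in K$, picking some $j$ with $z \in K_j$ and applying the triangle inequality yields $\mnorm{f_k(z) - f(z)}_{\CC^m} \leq \mnorm{f_k(z) - f(z_j)}_{\CC^m} + \mnorm{f(z_j) - f(z)}_{\CC^m} < \eps/2 + \eps/3 < \eps$. Taking the supremum over $z \in K$ gives locally uniform convergence since $K$ and $\eps$ were arbitrary.

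The main obstacle lies in the converse direction: one must engineer a compact-open neighborhood of $f$ whose membership condition pins $f_k$ close to $f$ \emph{uniformly} across the entire compact set $K$, rather than merely pointwise near individual reference points. The standard remedy is precisely the combination of uniform continuity of $f|_K$ with the finite subcover provided by compactness, which translates the local information encoded in sub-basic compact-open sets into a genuine uniform estimate on $K$.
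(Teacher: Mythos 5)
Your proof is correct in structure and takes a genuinely different route from the paper: the paper does not argue from \cref{def:compact_open} at all, but simply invokes Theorem~XII.7.2 of \cite{dugundji_topology_1987} together with the fact that $\CC^m$ is a metric space, whereas you give a self-contained $\eps$--$\delta$ argument. Your direction from locally uniform convergence to compact-open convergence is clean (the positive distance $\dist(f(K),\CC^m\setminus V)$ and the reduction of basic neighborhoods to finitely many sub-basic ones are exactly the right points), and the converse correctly identifies the key difficulty and resolves it via uniform continuity of $f|_K$ plus a finite subcover. What each approach buys: the citation keeps the appendix short and works verbatim for maps from a locally compact Hausdorff space into any metric space; your argument makes the appendix self-contained and exposes where compactness and the metric on the target are actually used.

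One small repair is needed in the converse. You choose $\delta$ so that $\mnorm{z-z'}_{\CC^n}<\delta$ implies $\mnorm{f(z)-f(z')}_{\CC^m}<\eps/3$ for $z,z'\in K$, but you then set $K_j\defeq K\cap\bc{z_j}{\delta}$, so a point $z\in K_j$ may satisfy $\mnorm{z-z_j}_{\CC^n}=\delta$ exactly, and the uniform-continuity estimate does not apply to it; consequently the claim that $f\in U$, i.e.\ $f(K_j)\subseteq\bo{f(z_j)}{\eps/2}$, can fail as written (take $K$ consisting of two points at distance exactly $\delta$ with $\mnorm{f(z_1)-f(z_2)}_{\CC^m}\geq\eps/2$). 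The fix is one line: either cover $K$ by the balls $\bo{z_j}{\delta/2}$ and set $K_j\defeq K\cap\bc{z_j}{\delta/2}$, or first obtain a threshold $\delta'$ from uniform continuity and then work with $\delta\defeq\delta'/2$, so that every $z\in K_j$ has $\mnorm{z-z_j}_{\CC^n}\leq\delta<\delta'$ and the estimate $\mnorm{f(z)-f(z_j)}_{\CC^m}<\eps/3<\eps/2$ holds; the rest of your triangle-inequality argument then goes through unchanged.
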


The next result says that in $C(\CC^n; \CC^m)$ with the compact-open topology,
closures of subsets can be characterized by limits of sequences.
\begin{Prop}\label{prop:sequential}
Let $f \in C(\CC^n; \CC^m)$ and $\mathcal{F} \subseteq C(\CC^n; \CC^m)$.
Denote by $\overline{\mathcal{F}}$ the closure of $\mathcal{F}$ with respect to the compact-open topology.  
Then $f \in \overline{\mathcal{F}}$ if and only if there exists a sequence $(f_k)_{k \in \NN}$ with $f_k \in \mathcal{F}$ for $k \in \NN$ that converges to $f$ in the compact-open topology.
\end{Prop} 
\begin{proof}
Since both $\CC^n$ and $\CC^m$ are second countable, we infer by \cite[Theorem~XII.5.2]{dugundji_topology_1987} that $C(\CC^n; \CC^m)$ equipped with the compact-open topology is second countable, too.
For second countable topological spaces it is well-known that closures of subsets can be characterized by limits of sequences;
see \cite[Ex.~3~on~p.~186 and~Theorem~X.6.2]{dugundji_topology_1987}.
\end{proof}

In particular, \Cref{prop:equivalence,prop:sequential,prop:comp_open_loc_unif_conv} yield the following equivalence, which we state in the following 
proposition.
\begin{Prop}\label{prop:equi}
Let $\mathcal{F} \subseteq C(\CC^n; \CC^m)$. 
Then the following are equivalent:
\begin{enumerate}[label={(\roman*)},leftmargin=*,align=left,noitemsep]
\item{The set $\mathcal{F}$ has the universal approximation property.}
\item{For every $f \in C(\CC^n;\CC^m)$, there exists a sequence $(f_k)_{k \in \NN}$ of elements $f_k \in \mathcal{F}$ such that $(f_k)_{k \in \NN}$ converges locally uniformly to $f$.}
\item{The set $\mathcal{F}$ is dense in $C(\CC^n;\CC^m)$ with respect to the compact-open topology.}
\end{enumerate}
\end{Prop}

In the present paper, it is of particular importance that the composition of functions is compatible with locally uniform convergence.
\begin{Prop}
\label{prop:comp_loc_conv}
Let $(f_k)_{k \in \NN}$ and $(g_k)_{k \in \NN}$ be two sequences of functions with $f_k \in C(\CC^{n_1};\CC^{n_2})$ and $g_k \in C(\CC^{n_2};\CC^{n_3})$ for $k \in \NN$.
Let $f \in C(\CC^{n_1};\CC^{n_2})$ and $g \in C(\CC^{n_2};\CC^{n_3})$ such that $f_k \to f$ and $g_k \to g$ locally uniformly.
Then we have
\begin{equation*}
g_k \circ f_k \xrightarrow{k\to \infty} g \circ f
\end{equation*}
locally uniformly.
\end{Prop}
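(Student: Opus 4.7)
The plan is to verify the defining condition of locally uniform convergence: for an arbitrary compact set $K \subseteq \CC^{n_1}$, I would show that $\sup_{z \in K} \mnorm{g_k(f_k(z)) - g(f(z))}_{\CC^{n_3}} \to 0$ as $k\to\infty$. The natural decomposition via the triangle inequality reads
\begin{equation*}
\mnorm{g_k(f_k(z)) - g(f(z))}_{\CC^{n_3}} \leq \mnorm{g_k(f_k(z)) - g(f_k(z))}_{\CC^{n_3}} + \mnorm{g(f_k(z)) - g(f(z))}_{\CC^{n_3}},
\end{equation*}
and the goal is to control both summands uniformly in $z \in K$.

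The key preparatory step is to localize the arguments of $g_k$ and $g$ to a single compact subset of $\CC^{n_2}$. Since $f$ is continuous, $f(K)$ is compact, so $K' \defeq \setcond{w \in \CC^{n_2}}{\dist(w, f(K)) \leq 1}$ is a closed bounded, hence compact, subset of $\CC^{n_2}$. Because $f_k \to f$ uniformly on $K$, there is some index $N_0$ such that $f_k(K) \subseteq K'$ for all $k \geq N_0$. This is the main technical point and the only place where local uniformity is genuinely used; I expect it to be the slightly subtle step, since without it one cannot invoke uniform convergence of $g_k$ on a single fixed set.

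With $K'$ in hand, both summands are handled by standard arguments. For the first summand, uniform convergence $g_k \to g$ on the compact set $K'$ yields an index $N_1$ such that $\sup_{w \in K'}\mnorm{g_k(w) - g(w)}_{\CC^{n_3}} < \eps/2$ for all $k \geq N_1$. For the second summand, $g$ restricted to the compact $K'$ is uniformly continuous, so there exists $\delta > 0$ with $\mnorm{g(w_1) - g(w_2)}_{\CC^{n_3}} < \eps/2$ whenever $w_1, w_2 \in K'$ satisfy $\mnorm{w_1 - w_2}_{\CC^{n_2}} < \delta$; combining this with uniform convergence $f_k \to f$ on $K$ gives an index $N_2$ beyond which $\mnorm{f_k(z) - f(z)}_{\CC^{n_2}} < \delta$ for every $z \in K$. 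Putting the pieces together for $k \geq \max\setn{N_0, N_1, N_2}$ bounds the total error uniformly by $\eps$, which proves the claim.
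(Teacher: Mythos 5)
Your proof is correct, but it follows a genuinely different route from the paper. The paper does not argue with $\eps$'s at all: it cites \cite[Theorem~XII.2.2]{dugundji_topology_1987} for the fact that the composition map $(h_1,h_2)\mapsto h_2\circ h_1$ is continuous when each function space carries the compact-open topology (using that $\CC^{n_2}$ is locally compact and $\CC^{n_1},\CC^{n_3}$ are Hausdorff), and then concludes via \cref{prop:comp_open_loc_unif_conv}, which identifies convergence in the compact-open topology with locally uniform convergence. Your argument is instead a direct, self-contained analysis: the triangle-inequality split, the localization $f_k(K)\subseteq K'$ for $K'\defeq\setcond{w\in\CC^{n_2}}{\dist(w,f(K))\leq 1}$ (compact by Heine--Borel, and you correctly flag this localization as the one place where uniformity of $f_k\to f$ on $K$ is genuinely needed before $g_k\to g$ on a fixed compact set can be invoked), uniform convergence of $g_k$ on $K'$, and uniform continuity of $g$ on $K'$. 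What each approach buys: the paper's version is shorter on the page and fits the topological framework of its appendix, delegating all work to a standard reference and making transparent which structural hypotheses (local compactness of the middle space) drive the result; your version avoids any external citation and the compact-open machinery entirely, at the modest cost of relying on the metric/Heine--Borel structure of $\CC^{n_2}$ and on uniform continuity on compacta, both of course available here. Both proofs are complete and correct.
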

\begin{proof}
Using \cite[Theorem~XII.2.2]{dugundji_topology_1987}, we know that the map
\begin{equation*}
C(\CC^{n_1};\CC^{n_2}) \times C(\CC^{n_2};\CC^{n_3}) \to C(\CC^{n_1}; \CC^{n_3}), \quad  (h_1,h_2) \mapsto h_2 \circ h_1
\end{equation*}
is continuous, where each space $C(\CC^{n_j};\CC^{n_k})$ is equipped with the compact-open topology and Cartesian products of spaces are equipped with the product topology.
Note here that we use the fact that $\CC^{n_1}$ and $\CC^{n_3}$ are Hausdorff spaces and $\CC^{n_2}$ is locally compact.
Then the claim follows from \Cref{prop:comp_open_loc_unif_conv}.
\end{proof}
Note that the statement of \Cref{prop:comp_loc_conv} can inductively be extended to the composition of $L$ functions, where $L$ is any natural number.

\section{Taylor expansion using Wirtinger derivatives}

In this appendix we give some details about the Taylor expansion introduced in \Cref{thm:taylor}.
Furthermore we show that an activation function which is not $\RR$-affine necessarily admits a point where one of the second-order Wirtinger derivatives does not vanish.
We begin by restating and proving \Cref{thm:taylor}.
\begin{Lem}
\label{app:taylor}
Let $\rr\in C(\CC;\CC)$ and $z,z_0\in\CC$.
If $\rr$ is real differentiable at $z_0$, then
\begin{equation}
\label{app:taylor-1}
\rr(z+z_0)=\rr(z_0)+\wirt\rr(z_0)z+\wirtq\rr(z_0)\overline{z}+\Theta_1(z)
\end{equation}
for a function $\Theta_1:\CC\to\CC$ with $\lim_{\CC\setminus\setn{0}\ni z\to 0}\frac{\Theta_1(z)}{z}=0$.
If  $\rr\in C^2(\CC;\CC)$, then
\begin{equation}
\label{app:taylor-2}
\rr(z+z_0)=\rr(z_0)+\wirt\rr(z_0)z+\wirtq\rr(z_0)\overline{z}+\frac{1}{2}\wirt^2\rr(z_0)z^2+\wirt\wirtq\rr(z_0)z\overline{z}+\frac{1}{2}\wirtq^2\rr(z_0)\overline{z}^2+\Theta_2(z)
\end{equation}
for a function $\Theta_2:\CC\to\CC$ with $\lim_{\CC\setminus\setn{0}\ni z\to 0}\frac{\Theta_2(z)}{z^2}=0$.
\end{Lem}
\begin{proof}
\cref{app:taylor-1} follows from the definition of real differentiability \eqref{eq:real_diff} by using
\begin{equation*}
\frac{\p \rr}{\p x}(z_0) \RE(z) + \frac{\p \rr}{\p y} (z_0) \IM(z) = \frac{\partial \rr}{\partial x}(z_0)  \cdot \frac{1}{2}(z + \overline{z}) + \frac{\p \rr}{\p y}  \cdot \frac{1}{2 \ii} (z - \overline{z}) = \wirt \rr(z_0)z + \wirtq \rr (z_0) \overline{z}.
\end{equation*}
In order to prove \eqref{app:taylor-2} we use the second-order Taylor expansion of $\rr$ around $z_0$ which can be found for instance in \cite[Theorem~VII.5.11]{amann2008analysis} and obtain
\begin{equation*}
\rr(z + z_0) = \rr(z_0) + \frac{\p \rr}{\p x} (z_0) x + \frac{\p \rr}{\p y}(z_0) y + \frac{1}{2}\frac{\p^2 \rr}{\p x^2}(z_0)x^2 + \frac{\p^2 \rr}{\p x \p y}(z_0)xy + \frac{1}{2} \frac{\p^2 \rr}{\p y^2}(z_0) y^2 + \Theta_2(z)
\end{equation*}
where $\Theta_2 :\CC \to \CC$ satisfies $\lim_{\CC\setminus\setn{0}\ni z\to 0}\frac{\Theta_2(z)}{z^2}=0$.
Furthermore, we use the notation $x = \RE(z)$ and $y= \IM(z)$.
Letting $x = \frac{1}{2}(z + \overline{z})$, $y = \frac{1}{2\ii}(z-\overline{z})$ and using
\begin{equation}\label{eq:matrix}
\frac{1}{4}\pM{1& -2\ii&-1\\1&\phantom{-}0&\phantom{-}1\\1&\phantom{-}2\ii&-1} \pM{\frac{\p^2}{\p x^2} \\[4pt] \frac{\p^2}{\p x \p y} \\[4pt]\frac{\p^2}{\p y^2}} = \pM{\wirt^2 \\[4pt]\wirt \wirtq \\[4pt]\wirtq^2}
\end{equation}
yields the claim.
\end{proof}

The following Proposition is required in the proof of \cref{prop: approx}.
\begin{Prop} \label{app:helpproof}
Let $\rr \in C^2(\CC;\CC)$ be not $\RR$-affine.
Then there exists a point $z_0 \in \CC$ such that either $\wirt^2  \rr (z_0) \neq 0$, $\wirt \wirtq \rr (z_0) \neq 0$ or $\wirtq^2 \rr(z_0) \neq 0$.
\end{Prop}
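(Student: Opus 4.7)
The plan is to prove the contrapositive: assume that $\wirt^2\rr(z)=\wirt\wirtq\rr(z)=\wirtq^2\rr(z)=0$ for every $z\in\CC$, and deduce that $\rr$ is $\RR$-affine.

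First I would invoke the linear relation \eqref{eq:matrix} from the proof of \cref{app:taylor}, which expresses the second-order Wirtinger derivatives as an invertible linear transformation of the real second-order partial derivatives $\frac{\p^2\rr}{\p x^2}$, $\frac{\p^2\rr}{\p x\p y}$, $\frac{\p^2\rr}{\p y^2}$. A short determinant computation shows that the $3\times 3$ matrix appearing there has determinant $-8\ii\neq 0$, so the map is invertible. Consequently, the vanishing of all three second-order Wirtinger derivatives at every point of $\CC$ forces
\begin{equation*}
\frac{\p^2\rr}{\p x^2}\equiv\frac{\p^2\rr}{\p x\p y}\equiv\frac{\p^2\rr}{\p y^2}\equiv 0\quad\text{on }\CC.
\end{equation*}

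Next, I would integrate twice. From $\frac{\p^2\rr}{\p x^2}\equiv 0$ and the fact that $\rr\in C^2(\CC;\CC)$, for each fixed $y\in\RR$ the function $x\mapsto\rr(x,y)$ is a polynomial of degree at most one in $x$, so there exist functions $A,B:\RR\to\CC$ with $\rr(x,y)=A(y)x+B(y)$. The regularity assumption $\rr\in C^2(\CC;\CC)$ transfers to $A,B\in C^2(\RR;\CC)$. Then $\frac{\p^2\rr}{\p x\p y}=A'(y)\equiv 0$ forces $A$ to be constant, say $A\equiv a$, and $\frac{\p^2\rr}{\p y^2}=B''(y)\equiv 0$ forces $B(y)=by+c$ for some constants $b,c\in\CC$. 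Hence $\rr(x,y)=ax+by+c$, i.e., $\rr$ is $\RR$-affine, contradicting the hypothesis.

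I do not anticipate any major obstacle. The only point that requires a moment of care is verifying that the matrix in \eqref{eq:matrix} is invertible; the rest is standard one-dimensional integration applied one variable at a time, using the continuity of the second-order partials to justify the passage from pointwise vanishing to global affinity.
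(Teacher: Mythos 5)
Your proposal is correct and follows essentially the same route as the paper: prove the contrapositive, use the invertibility of the matrix in \eqref{eq:matrix} to pass from vanishing second-order Wirtinger derivatives to vanishing real second-order partials, and then conclude $\RR$-affinity by elementary calculus. The only (immaterial) differences are that you integrate slice-wise in $x$ and $y$ where the paper applies the vanishing-gradient-implies-constant fact to the first partials, and that your claim $A\in C^2$ is slightly more than the $C^1$ regularity you actually get (and need) from $\rr\in C^2(\CC;\CC)$.
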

\begin{proof}
Assume $\wirt^2 \rr \equiv \wirt \wirtq \rr \equiv \wirtq^2  \rr \equiv 0$.
From the fact that the matrix on the left-hand side in \eqref{eq:matrix} is invertible it follows $\frac{\p^2 \rr}{\p x^2} \equiv \frac{\p^2 \rr}{\p x \p y} \equiv \frac{\p^2 \rr}{\p y^2}  \equiv 0 $.
Since $\rr$ is $\RR$-affine if and only $\RE(\rr)$ and $\IM(\rr)$ are both $\RR$-affine, we may assume that $\rr$ is real-valued.
It is a well-known fact that a $C^1$-function with vanishing gradient is necessarily constant.
Applying this fact to $\frac{\p \rr}{\p x}$ and $\frac{\p \rr}{ \p y}$ separately shows
\begin{equation*}
\nabla \rr \equiv a
\end{equation*}
for a constant $a \in \RR^2$.
Let $f(z) \defeq z^\top a$ where $z \in \CC$ is treated as an element of $\RR^2$.
Then the gradient of $\rr - f$ vanishes identically and hence it holds $\rr - f \equiv b$ for a constant $b \in \RR$.
This yields
\begin{equation*}
\rr(z) = z^\top a + b \quad \text{for all } z \in \CC.
\end{equation*}
 But then $\rr$ is $\RR$-affine.
\end{proof}

\parindent 0pt
\textbf{Acknowledgements.} The authors thank Felix Voigtlaender for his helpful comments
and the anonymous reviewers for their feedback which helped improving the presentation of the material.
PG acknowledges support by the German Science Foundation (DFG) in
the context of the Emmy Noether junior research group VO 2594/1-1.

\footnotesize
\bibliographystyle{amsplain3}
\bibliography{references.bib}
\end{document}